\documentclass[11pt,letterpaper]{article}
\usepackage[T1]{fontenc}
\usepackage[utf8]{inputenc}
\usepackage{lmodern}
\usepackage[left=1in,right=1in,top=0.85in,bottom=0.85in]{geometry}
\usepackage{microtype}
\usepackage{amsmath,amssymb,amsthm,mathtools,bm}
\usepackage{booktabs,tabularx,array}
\usepackage{enumitem}
\usepackage{graphicx}
\usepackage[section]{placeins}
\usepackage{flafter}
\usepackage{authblk}
\usepackage[font=small,labelfont=bf,skip=5pt]{caption}
\usepackage{titlesec}
\titlespacing*{\section}{0pt}{2.2ex plus .6ex minus .3ex}{1.1ex plus .2ex}
\titlespacing*{\subsection}{0pt}{1.7ex plus .5ex minus .2ex}{.7ex plus .2ex}
\titlespacing*{\paragraph}{0pt}{1.1ex plus .3ex minus .2ex}{.7em}
\AtBeginDocument{%
  \setlength{\abovedisplayskip}{8pt plus 2pt minus 2pt}%
  \setlength{\belowdisplayskip}{8pt plus 2pt minus 2pt}%
  \setlength{\abovedisplayshortskip}{3pt plus 1pt}%
  \setlength{\belowdisplayshortskip}{5pt plus 2pt minus 2pt}%
}
\usepackage[dvipsnames]{xcolor}
\usepackage[authoryear,round]{natbib}
\usepackage{xurl}
\usepackage{hyperref}
\usepackage{bookmark}
\graphicspath{{figures/}{./}{Figures/}}
\hypersetup{
 colorlinks=true,
 citecolor=NavyBlue,
 linkcolor=MidnightBlue,
 urlcolor=RoyalBlue,
 pdftitle={Calibrated alpha-Barycenters for Multiobjective f-Divergence Optimization},
 pdfauthor={Dier Tang, Peijie Li, Xiaoyuan Zhang},
 bookmarksdepth=2
}
\renewcommand{\bibfont}{\small}
\makeatletter
\renewcommand*{\l@section}{\@dottedtocline{1}{0em}{2.2em}}
\makeatother
\setlist[enumerate]{itemsep=2pt,topsep=4pt}
\numberwithin{equation}{section}

\newtheorem{theorem}{Theorem}[section]
\newtheorem{corollary}[theorem]{Corollary}
\newtheorem{proposition}[theorem]{Proposition}
\newtheorem{lemma}[theorem]{Lemma}
\newtheorem{assumption}[theorem]{Assumption}
\theoremstyle{definition}
\newtheorem{definition}[theorem]{Definition}
\theoremstyle{remark}

\DeclareMathOperator{\softmax}{softmax}
\DeclareMathOperator{\ri}{ri}
\newcommand{\R}{\mathbb{R}}

\newcommand{\Acal}{\mathcal{A}}
\newcommand{\Ecal}{\mathcal{E}}

\newcommand{\KL}{\operatorname{KL}}

\newcommand{\op}{\mathrm{op}}
\newcommand{\eps}{\varepsilon}
\newcommand{\inner}[2]{\left\langle #1,#2\right\rangle}
\newcommand{\norm}[1]{\left\lVert #1\right\rVert}
\newcommand{\abs}[1]{\left\lvert #1\right\rvert}
\newcommand{\dd}{\,\mathrm{d}}

\newcommand{\EeightSamplesPDF}{E8_generated_samples.pdf}
\newcommand{\EeightImagesPDF}{E8_image_diagnostics.pdf}
\newcommand{\EeightTradeoffsPDF}{E8_latent_tradeoffs.pdf}
\newcommand{\EeightCostPDF}{E8_optimization_cost.pdf}
\newcommand{\EeightAuditsPDF}{E8_quadrature_checks.pdf}
\newcommand{\EeightRepresentationPDF}{E8_representation_checks.pdf}
\newcommand{\email}[1]{\textit{\href{mailto:#1}{#1}}}

\title{Calibrated $\alpha$-Barycenters for Multiobjective\\
       $f$-Divergence Optimization}

\author[1]{Dier Tang}
\author[1]{Peijie Li}
\author[2]{Xiaoyuan Zhang}

\affil[1]{The University of Hong Kong \\
  {\normalfont\small Email: \email{tangde\_math@connect.hku.hk}, \email{lipeijie@hku.hk}}}

\affil[2]{Zhongguancun Academy \\
  {\normalfont\small Email: \email{zhangxiaoyuan@bza.edu.cn}}}

\date{}

\begin{document}
\maketitle
\begin{abstract}
Matching one distribution to many targets can require repeated evaluations of every target. CAL-$\alpha$-MODM aggregates a nonlinear predictor into one barycenter, calibrates its residual derivatives, and checks steps against the original $f$-divergence objective. Under smoothness assumptions, calibration controls parameter-step model error without requiring targets to be close to the model. Exact-oracle stationarity and local convergence guarantees are established. At common matching, canonical geometry cancels the leading quadratic iteration error, giving cubic convergence with second-order calibration and sufficiently accurate inner solves. High-precision experiments support this prediction. Sensitivity-based initialization also accelerates preference continuation along regular stationary branches. On 300-target Jensen--Shannon problems, CAL-p2 achieves median paired speedups of $3.15\times$ over L-BFGS and $1.10\times$ over second-order Direct-TR, across 23/25 and 24/25 objective-comparable stationary pairs. The savings arise when cheaper predictor queries offset calibration overhead.

\vspace{2mm}
\noindent \textbf{Keywords:} Multiobjective optimization, $f$-divergence, Information geometry, Barycenters, Trust-region methods, Distribution matching

\vspace{2mm}
\noindent \textbf{2020 MSC}: 90C29, 90C30, 65K05, 94A17

\vspace{2mm}
\noindent \textbf{2012 ACM CCS}: G.1.6, I.2.6
\end{abstract}

\begingroup
\small
\tableofcontents
\endgroup
\clearpage

\section{Introduction}
\label{sec:intro}

Multiple distributional targets arise in ensemble compression and
generative modeling \citep{zhang2025}. Given $P_1,\ldots,P_m$ and
$\mathcal M=\{Q_\theta:\theta\in\Theta\subseteq\R^d\}$,
fix an orientation $\sigma\in\{-,+\}$ and consider
\begin{equation}
 \min_{\theta\in\Theta}
 \bigl(F_1^\sigma(\theta),\ldots,F_m^\sigma(\theta)\bigr),\qquad
 F_i^\sigma(\theta)=
 \begin{cases}
 D_f(P_i\|Q_\theta),&\sigma=-,\\
 D_f(Q_\theta\|P_i),&\sigma=+.
 \end{cases}
 \label{eq:moo}
\end{equation}
All objectives share $f$ and $\sigma$. Positive preferences specify
the trade-off. We study fixed scalarizations with exact oracles,
whose repeated evaluations can require revisiting every target.

Our key idea is to reuse one barycenter for predictor queries and
restore the true objective's local derivatives by residual calibration.
Exact $\alpha$-divergence scalarizations aggregate, but generic $f$
need not: likelihood ratios need not approach one as parameter steps
shrink, so local divergence geometry alone is insufficient.

We Taylor-correct the true objective minus the predictor and check
steps against the true scalarization, following trust-region model
management \citep{alexandrov1998,booker1999}. Aggregation lowers
predictor-query cost. Its nonlinear remainder can also reduce local
iteration error beyond a matched quadratic Taylor model.

\paragraph{Contributions.}
\begin{enumerate}[leftmargin=1.5em,itemsep=0.12em,topsep=0.2em]
\item \textbf{Compressed models with controlled accuracy.}
Exact $\alpha$-aggregation and residual calibration give parameter-step error bounds without target proximity, with stationarity and certified-iteration complexity guarantees.
\item \textbf{Geometry beyond the matched Taylor jet.}
The residual third derivative determines the leading local error coefficient for second-order calibration. Canonical geometry cancels it at common matching, giving a conditional cubic rate without third-order calibration. E10 audits this.
\item \textbf{Measured many-target crossover.}
For 300 JS targets, CAL-p2 gives median paired speedups of $3.15\times$ over L-BFGS and $1.10\times$ over second-order Direct-TR, with 23/25 and 24/25 objective-comparable stationary pairs. Latent targets demonstrate preference-controlled generation.
\end{enumerate}

\paragraph{Organization.}
Sections~\ref{sec:background}--\ref{sec:related} introduce the setting and related work.
Sections~\ref{sec:models}--\ref{sec:theory} develop the models, algorithm, and guarantees.
Extensions and implementation are covered in Sections~\ref{app:continuation}--\ref{app:implementation}, followed by experiments in Section~\ref{sec:experiments}.
The appendices give the remaining proofs and experimental protocols.

\section{Problem formulation}
\label{sec:background}

Let the distributions have positive densities relative to a common measure $\mu$. For convex $f$, normalize $f(1)=f'(1)=0$ and set $a_f=f''(1)>0$. Define \citep{csiszar1967,amari2016}
\begin{equation}
 D_f(P\|Q)=\int q(x)f\!\left(\frac{p(x)}{q(x)}\right)\dd\mu(x).
 \label{eq:fdiv}
\end{equation}
A preference $\lambda\in\Delta_m:=\{\lambda\in\R^m:\lambda_i\geq0,\ \sum_i\lambda_i=1\}$ gives
\begin{equation}
 F_\lambda^\sigma(\theta)=\sum_i\lambda_iF_i^\sigma(\theta).
 \label{eq:scalarizations}
\end{equation}
Write $D_g^-(P,Q)=D_g(P\|Q)$ and $D_g^+(P,Q)=D_g(Q\|P)$ for any generator $g$. The preference $\lambda$ and $\sigma$ remain fixed unless explicitly varied.

Throughout, the model family may be restricted: an aggregated target need
not lie in $\mathcal M$. Unless stated otherwise, optimization takes place
in Euclidean parameter coordinates with exact objective and derivative
oracles. The convergence statements distinguish first-order stationarity
from global minimization and from coverage of the Pareto set.

\section{Related work}
\label{sec:related}
\label{app:extended-related}

\paragraph{Divergence objectives in learning.}
$f$-divergences underlie variational classification objectives
\citep{novello2024fdiv}, convex losses \citep{roulet2025losses},
alignment--diversity trade-offs \citep{go2023alignment}, and preference
optimization \citep{han2025fpo}. They are also used in sequence-level
distillation \citep{wen2023fdistill}, reverse-KL distillation
\citep{gu2024minillm}, and distillation on student-generated sequences
\citep{agarwal2024gkd}. Domain adaptation \citep{acuna2021fdal} and robust
offline reinforcement learning \citep{tang2025robust} use related
discrepancies, although their adversarial formulations differ from our
fixed-target scalarizations.

\paragraph{Local statistical geometry.}
Universal feature analysis relates local inference to low-dimensional
dependence representations \citep{huang2024universalfeature}.
\citet{xu2024neural} study neural feature learning in function space,
where squared dependence norms correspond to $\chi^2$ divergence.
\citet{tang2026sharp} analyze spectral representation learning for local
tasks in $\chi^2$ neighborhoods. Task covariance determines transfer
regret and motivates a correction to spectral learning. Here, local
divergence geometry is used to construct optimization predictors for
fixed distributional targets.

\paragraph{Aggregation and centroids.}
\citet{zhang2025} characterize KL and inverse-KL Pareto sets in full
exponential families and develop multiobjective variational autoencoders.
Bregman centroid identities reduce weighted objectives to one target
\citep{banerjee2005,nielsen2009}. Power-mean annealing paths admit
$\alpha$-divergence characterizations \citep{masrani2021qpaths}, while
transformed Bregman divergences characterize quasi-arithmetic means
\citep{brekelmans2024annealing}. Related constructions yield broader
divergence families \citep{nielsen2022quasi} and approximate Jeffreys
centers \citep{nielsen2024proxy}. We consider general smooth $f$ and
restricted model families, using the aggregate as a predictor even when
its density cannot be represented by $Q_\theta$.

\paragraph{Multiobjective optimization and preferences.}
CAGrad \citep{liu2021cagrad}, Nash-MTL \citep{navon2022bargaining}, and
FAMO \citep{liu2023famo} adapt task influence. Smooth Tchebycheff
scalarization \citep{lin2024tchebycheff} and FERERO
\citep{chen2024ferero} support nonlinear or constrained preferences.
Our aggregation identity instead assumes a fixed linear scalarization.
For repeated preferences, hypernetworks learn preference-to-parameter
maps \citep{navon2021pareto}, and Pareto set learning approximates
solution sets in Bayesian optimization \citep{lin2022pareto}.
E6 uses implicit differentiation \citep{blondel2022implicit} to
initialize a common corrector along regular stationary branches.

\paragraph{Calibration and model management.}
Trust-region model management establishes convergence for optimization
with approximations \citep{alexandrov1998,booker1999}.
Evaluation-complexity theory \citep{cartis2022evaluation} and local
higher-order analysis \citep{doikov2022local} provide the corresponding
global and local perspectives. We specialize these tools to aggregated
nonlinear predictors with calibrated residual derivatives.
Stochastic trust-region methods with inexact restoration address
sampled finite sums \citep{bellavia2023sirtr}. Extending our results to
sampled divergences would require derivative and acceptance-test error
controls beyond the deterministic bounds proved here.

\section{Calibrated barycenter predictors}
\label{sec:models}

\subsection{Canonical local \texorpdfstring{$\alpha$}{alpha} geometry}

For $\alpha\in\R$, define
\begin{equation}
 D_\alpha(P\|Q)=\frac{4}{1-\alpha^2}
 \left(1-\int p^{(1-\alpha)/2}q^{(1+\alpha)/2}\dd\mu\right),
 \label{eq:alpha-div}
\end{equation}
with continuous limits $D_{-1}(P\|Q)=\KL(P\|Q)$ and
$D_1(P\|Q)=\KL(Q\|P)$.  For $\alpha\neq\pm1$, an affine-normalized generator is
\begin{equation}
 f_\alpha(u)=\frac{4}{1-\alpha^2}(1-u^{(1-\alpha)/2})
 +\frac{2}{1+\alpha}(u-1),
 \label{eq:normalized-alpha-generator}
\end{equation}
with endpoint limits $f_{-1}(u)=u\log u-u+1$ and
$f_1(u)=-\log u+u-1$.  It satisfies
$f_\alpha''(1)=1$ and $f_\alpha'''(1)=-(3+\alpha)/2$.

\begin{theorem}[Fourth-order local normal form]
\label{thm:normal-form}
Let $0<\delta<1$, $f\in C^4([1-\delta,1+\delta])$, and define
\begin{equation}
 \alpha_f:=-3-2\frac{f'''(1)}{f''(1)},
 \qquad h_f:=f-a_ff_{\alpha_f}.
 \label{eq:alpha-f}
\end{equation}
Then $h_f^{(j)}(1)=0$ for $j=0,1,2,3$.  If $p/q\in[1-\delta,1+\delta]$ almost everywhere, then
\begin{equation}
 \abs{D_f(P\|Q)-a_fD_{\alpha_f}(P\|Q)}
 \leq \frac{M_{f,\delta}}{24}\int q\abs{p/q-1}^4\dd\mu,
 \label{eq:normal-bound}
\end{equation}
where $M_{f,\delta}=\sup_{u\in[1-\delta,1+\delta]}\abs{h_f^{(4)}(u)}$.
\end{theorem}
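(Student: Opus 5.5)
The plan has two parts: an algebraic check that the jets match, then a Taylor expansion integrated against $q$.

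\textbf{Step 1: jet matching.} I first compute the derivatives of $f_\alpha$ at $u=1$ for $\alpha\neq\pm1$. With $s=(1-\alpha)/2$:
\begin{itemize}
\item $f_\alpha(1)=0$ is immediate.
\item $f_\alpha'(1)=-\frac{4s}{1-\alpha^2}+\frac{2}{1+\alpha}=-\frac{2}{1+\alpha}+\frac{2}{1+\alpha}=0$.
\item $f_\alpha''(1)=-\frac{4s(s-1)}{1-\alpha^2}=1$, since $s(s-1)=-(1-\alpha^2)/4$.
\item $f_\alpha'''(1)=f_\alpha''(1)\,(s-2)=-(3+\alpha)/2$.
\end{itemize}
These match the values stated in the paper. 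The endpoint cases $\alpha=\pm1$ are checked directly from $u\log u-u+1$ and $-\log u+u-1$, which give third derivatives $-1$ and $-2$, consistent with $-(3+\alpha)/2$.

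Since $f(1)=f'(1)=0$, the function $h_f=f-a_ff_{\alpha_f}$ vanishes to first order at $1$. At second order, $h_f''(1)=a_f-a_f\cdot1=0$. At third order,
\[
h_f'''(1)=f'''(1)+a_f\frac{3+\alpha_f}{2}=f'''(1)-a_f\frac{f'''(1)}{f''(1)}=0,
\]
using $3+\alpha_f=-2f'''(1)/f''(1)$. This gives the first claim.

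\textbf{Step 2: express the divergence gap through $h_f$.} I need the identity $D_{f_\alpha}=D_\alpha$. For $\alpha\neq\pm1$ it holds because $\int q\,(p/q)^{s}\dd\mu=\int p^{s}q^{1-s}\dd\mu$ with $1-s=(1+\alpha)/2$, which is exactly the integrand in \eqref{eq:alpha-div}. The affine term contributes $\frac{2}{1+\alpha}\int(p-q)\dd\mu=0$, since both densities are normalized. For $\alpha=\pm1$ the identity is the standard KL representation, and the affine term again integrates to zero. Linearity of $g\mapsto D_g$ then gives
\[
D_f(P\|Q)-a_fD_{\alpha_f}(P\|Q)=\int q\,h_f(p/q)\dd\mu.
\]

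\textbf{Step 3: Taylor bound.} Since $f\in C^4$ on the interval and $f_{\alpha_f}$ is smooth on $(0,\infty)$, which contains $[1-\delta,1+\delta]$ because $\delta<1$, we have $h_f\in C^4$ there. Taylor's theorem with Lagrange remainder about $1$, together with the vanishing jet from Step 1, gives for $u$ in the interval
\[
\abs{h_f(u)}\le \frac{M_{f,\delta}}{24}\abs{u-1}^4.
\]
Apply this with $u=p/q$, which lies in the interval almost everywhere, multiply by $q$, and integrate; the triangle inequality for integrals then yields \eqref{eq:normal-bound}.

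\textbf{Main obstacle.} The only delicate point is bookkeeping. One must confirm that $D_{f_\alpha}$ equals $D_\alpha$ exactly, including the normalization constants and the vanishing affine term, and that the endpoint generators fit the same formulas when $\alpha_f=\pm1$. Integrability is not an issue, because $h_f$ is bounded on the compact interval and $q$ integrates to one.
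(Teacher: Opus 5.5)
Your proposal is correct and follows the paper's own proof: you show that the jet of $h_f$ at $u=1$ vanishes through order three, using $f_\alpha''(1)=1$ and $f_\alpha'''(1)=-(3+\alpha)/2$, then apply the Lagrange-remainder Taylor bound at $u=p/q$ and integrate against $q$. Your explicit check that $D_{f_\alpha}=D_\alpha$, including the vanishing affine term and the endpoint cases $\alpha_f=\pm1$, fills in bookkeeping that the paper leaves implicit.
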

\begin{proof}
The normalization gives $f(1)=f'(1)=0$.  From the derivatives of $f_\alpha$ and~\eqref{eq:alpha-f},
\begin{equation}
 a_ff_{\alpha_f}''(1)=f''(1),\qquad
 a_ff_{\alpha_f}'''(1)=-a_f\frac{3+\alpha_f}{2}=f'''(1).
\end{equation}
Therefore $h_f=f-a_ff_{\alpha_f}$ has a zero of order at least four at $u=1$.  Taylor's theorem gives
$h_f(u)=h_f^{(4)}(\xi_u)(u-1)^4/24$ for some $\xi_u$ between $1$ and $u$.  Substitute $u=p/q$, multiply by $q$, integrate, and apply the triangle inequality to obtain~\eqref{eq:normal-bound}.
\end{proof}

The bound controls likelihood-ratio displacement and does not, by itself, imply model accuracy as a parameter step tends
to zero.

\subsection{Exact aggregation}

Let $r_\alpha=(1-\alpha)/2$ and $s_\alpha=(1+\alpha)/2$.  For nonzero exponents define
\begin{align}
 c^-_{\lambda,\alpha}
 &=\frac{(\sum_i\lambda_ip_i^{r_\alpha})^{1/r_\alpha}}{Z^-_{\lambda,\alpha}},
 &Z^-_{\lambda,\alpha}&=\int(\sum_i\lambda_ip_i^{r_\alpha})^{1/r_\alpha}\dd\mu,
 \label{eq:cminus}\\
 c^+_{\lambda,\alpha}
 &=\frac{(\sum_i\lambda_ip_i^{s_\alpha})^{1/s_\alpha}}{Z^+_{\lambda,\alpha}},
 &Z^+_{\lambda,\alpha}&=\int(\sum_i\lambda_ip_i^{s_\alpha})^{1/s_\alpha}\dd\mu.
 \label{eq:cplus}
\end{align}
For a zero exponent, normalize the geometric mean $\prod_i p_i^{\lambda_i}$. We assume finite, strictly positive normalizers and finite divergences wherever the optimization model is evaluated.

\begin{theorem}[Exact one-target aggregation]
\label{thm:aggregation}
For $\alpha\neq\pm1$ and $0<Z^\sigma_{\lambda,\alpha}<\infty$,
\begin{align}
 \sum_i\lambda_iD_\alpha(P_i\|Q)
 &=C^-_{\lambda,\alpha}+\omega^-_{\lambda,\alpha}
 D_\alpha(c^-_{\lambda,\alpha}\|Q),
 &\omega^-_{\lambda,\alpha}&=(Z^-_{\lambda,\alpha})^{r_\alpha},
 \label{eq:agg-minus}\\
 \sum_i\lambda_iD_\alpha(Q\|P_i)
 &=C^+_{\lambda,\alpha}+\omega^+_{\lambda,\alpha}
 D_\alpha(Q\|c^+_{\lambda,\alpha}),
 &\omega^+_{\lambda,\alpha}&=(Z^+_{\lambda,\alpha})^{s_\alpha}.
 \label{eq:agg-plus}
\end{align}
The constants $C^\sigma_{\lambda,\alpha}$ do not depend on $Q$.  The identities extend to $\alpha=\pm1$ whenever all displayed terms admit finite continuous limits.
\end{theorem}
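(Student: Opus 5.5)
The plan is to expand each $\alpha$-divergence into its affine form and observe that only the cross integral depends on $Q$. From~\eqref{eq:alpha-div}, with $\kappa_\alpha=4/(1-\alpha^2)$,
\[
 \sum_i\lambda_iD_\alpha(P_i\|Q)=\kappa_\alpha\Bigl(1-\int\Bigl(\sum_i\lambda_ip_i^{r_\alpha}\Bigr)q^{s_\alpha}\dd\mu\Bigr),
\]
because $\sum_i\lambda_i=1$. The key step is to rewrite the weighted power mean through the aggregate. By~\eqref{eq:cminus},
\[
 \sum_i\lambda_ip_i^{r_\alpha}=\bigl(Z^-_{\lambda,\alpha}c^-_{\lambda,\alpha}\bigr)^{r_\alpha}=\omega^-_{\lambda,\alpha}(c^-_{\lambda,\alpha})^{r_\alpha}.
\]
This identity uses $r_\alpha\neq0$ and positivity of the densities. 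The integral then becomes $\omega^-_{\lambda,\alpha}\int (c^-)^{r_\alpha}q^{s_\alpha}\dd\mu$.

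Next I would substitute $\int (c^-)^{r_\alpha}q^{s_\alpha}\dd\mu=1-D_\alpha(c^-\|Q)/\kappa_\alpha$, which is valid since $c^-$ is a normalized density. This gives
\[
 \sum_i\lambda_iD_\alpha(P_i\|Q)=\kappa_\alpha(1-\omega^-_{\lambda,\alpha})+\omega^-_{\lambda,\alpha}D_\alpha(c^-_{\lambda,\alpha}\|Q).
\]
Hence $C^-_{\lambda,\alpha}=\kappa_\alpha(1-\omega^-_{\lambda,\alpha})$, which is independent of $Q$. The case $\sigma=+$ is symmetric. There $D_\alpha(Q\|P_i)$ has integrand $q^{r_\alpha}p_i^{s_\alpha}$, so the roles of $r_\alpha$ and $s_\alpha$ swap, and the same computation with~\eqref{eq:cplus} gives $\omega^+=(Z^+)^{s_\alpha}$. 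Finiteness of $Z^\sigma$ and of the divergences justifies splitting the integrals linearly. The zero-exponent cases, $\alpha=1$ for the minus orientation and $\alpha=-1$ for the plus orientation, are excluded from this computation because $\alpha\neq\pm1$. The normalized geometric mean appears only in the limiting statement.

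For the endpoint extension, I would argue by continuity. Both sides are equal for every $\alpha$ near $\pm1$ with $\alpha\neq\pm1$. If every displayed term has a finite continuous limit, passing to the limit preserves the identity, with $C^\sigma$ defined as the limit of $\kappa_\alpha(1-\omega^\sigma)$.

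The main obstacle is this endpoint step. In the limit where the relevant exponent tends to zero, $\kappa_\alpha$ blows up and $\omega\to1$, so $C$ is a $0\cdot\infty$ form. It converges to a log-normalizer term such as $-\log Z$ in the geometric-mean case, with the appropriate sign and scale. The statement assumes these limits exist, so only the passage to the limit needs to be written out. Identifying the limit explicitly, and checking that $c^\sigma$ converges to the normalized geometric mean (power means converge to geometric means as the exponent goes to zero), would be a side remark rather than a required step.
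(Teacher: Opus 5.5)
Your computation for $\alpha\neq\pm1$ is the paper's proof. The paper also writes $\sum_i\lambda_iD_\alpha(P_i\|Q)=K_\alpha\bigl(1-\int H^-q^{s_\alpha}\dd\mu\bigr)$ with $H^-=\sum_i\lambda_ip_i^{r_\alpha}=\omega^-(c^-)^{r_\alpha}$, reads off $C^-=K_\alpha(1-\omega^-)$, and exchanges arguments for $\sigma=+$.

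The routes differ only at $\alpha=\pm1$. You pass to the limit. This is logically sufficient, because the statement makes the extension conditional on the displayed terms having finite continuous limits. That hypothesis includes convergence of $D_\alpha(c^\sigma_{\lambda,\alpha}\|Q)$, where both the index and the aggregate move, to the endpoint KL with the limiting aggregate.

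The paper instead verifies the endpoint identities directly by expanding the KL integrals:
\[
\sum_i\lambda_i\KL(P_i\|Q)=\sum_i\lambda_i\KL(P_i\|\bar P)+\KL(\bar P\|Q),\qquad
\sum_i\lambda_i\KL(Q\|P_i)=\KL(Q\|G)-\log Z_g.
\]
This needs only finiteness of these terms, not any continuity in $\alpha$. It also identifies the endpoint aggregates and constants explicitly.

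One correction to your side remark: the $0\cdot\infty$ form for $C^\sigma$ occurs at both endpoints, not only where the relevant exponent tends to zero, since $K_\alpha=1/(r_\alpha s_\alpha)$ blows up at $\alpha=\pm1$. When the relevant exponent tends to one (for example $\sigma=-$, $\alpha\to-1$), one has $Z^-\to1$. The limit of $C^-$ is then the information radius $\sum_i\lambda_i\KL(P_i\|\bar P)$, not a log-normalizer.
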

\begin{proof}
Let $K_\alpha=4/(1-\alpha^2)$ and
$H^-(x)=\sum_i\lambda_ip_i(x)^{r_\alpha}$.  Then
\begin{equation}
 \sum_i\lambda_iD_\alpha(P_i\|Q)
 =K_\alpha\left(1-\int H^-(x)q(x)^{s_\alpha}\dd\mu(x)\right).
\end{equation}
By~\eqref{eq:cminus},
$H^-=(Z^-_{\lambda,\alpha})^{r_\alpha}(c^-_{\lambda,\alpha})^{r_\alpha}$, and hence
\begin{align}
 \sum_i\lambda_iD_\alpha(P_i\|Q)
 &=K_\alpha(1-\omega^-_{\lambda,\alpha})
 +\omega^-_{\lambda,\alpha}D_\alpha(c^-_{\lambda,\alpha}\|Q).
\end{align}
The reverse identity follows by taking $H^+(x)=\sum_i\lambda_ip_i(x)^{s_\alpha}$ and exchanging the arguments.
At the KL endpoints, write $\bar p=\sum_i\lambda_ip_i$ and
$g=Z_g^{-1}\prod_i p_i^{\lambda_i}$, with
$Z_g=\int\prod_i p_i^{\lambda_i}\dd\mu$. Direct expansion gives
\begin{align}
 \sum_i\lambda_i\KL(P_i\|Q)
 &=\sum_i\lambda_i\KL(P_i\|\bar P)+\KL(\bar P\|Q),\label{eq:kl-arithmetic}\\
 \sum_i\lambda_i\KL(Q\|P_i)
 &=\KL(Q\|G)-\log Z_g.\label{eq:kl-geometric}
\end{align}
Here $\bar P$ and $G$ have densities $\bar p$ and $g$. Finiteness is assumed for the displayed terms. These are the $\alpha=-1$ identities in the two orientations. The case $\alpha=1$ exchanges them. Their scale factors are one, as also obtained from the stated finite continuous limits.
\end{proof}

The positive factor $\omega^\sigma$ does not change the predictor's minimizers over a fixed model family, but is required to reproduce the multi-target predicted reductions. Omitting it generally changes the acceptance ratio.

\subsection{Derivative calibration}

Fix $\lambda$ and $\sigma$ and write $F=F_\lambda^\sigma$.  Define the structured predictor
\begin{equation}
 \Acal(\theta):=a_f\sum_i\lambda_iD_{\alpha_f}^\sigma(P_i,Q_\theta),
 \label{eq:surrogate}
\end{equation}
which equals a constant plus one weighted divergence to $c^\sigma_{\lambda,\alpha_f}$ by Theorem~\ref{thm:aggregation}. This identity compresses $\Acal$, while $F$ remains the objective to be optimized. We analyze the Euclidean unconstrained case. At iteration $t$, define
$\widehat F_t(v)=F(\theta_t+v)$, $\widehat\Acal_t(v)=\Acal(\theta_t+v)$, and
$\widehat\Ecal_t=\widehat F_t-\widehat\Acal_t$.

\begin{definition}[Order-$p$ calibrated model]
\label{def:model}
For $p\in\{1,2,3\}$,
\begin{equation}
 m_t^{(p)}(v)=\widehat\Acal_t(v)
 +\sum_{j=0}^p\frac{1}{j!}\nabla^j\widehat\Ecal_t(0)[v]^j.
 \label{eq:model}
\end{equation}
\end{definition}
Let $T_pG(v):=\sum_{j=0}^p\nabla^jG(0)[v]^j/j!$ denote the Taylor
polynomial at zero. Equivalently,
\begin{equation}
 m_t^{(p)}(v)=T_p\widehat F_t(v)
 +\bigl(\widehat\Acal_t(v)-T_p\widehat\Acal_t(v)\bigr).
 \label{eq:taylor-remainder}
\end{equation}
Calibration matches the true order-$p$ jet (value and derivatives).
The choice of $\alpha$ affects only the higher-order nonlinear remainder.
E7 tests its finite-step accuracy, E10 its local iteration error, and
E9 its computational cost.

\begin{theorem}[Model order]
\label{thm:model-order}
Suppose $\widehat F_t,\widehat\Acal_t\in C^{p+1}$ on a neighborhood of the trial ball and
$\sup_{\norm v\leq\Delta_t}\norm{\nabla^{p+1}\widehat\Ecal_t(v)}_\op\leq L_{p+1}$. For $\norm v\leq\Delta_t$,
\begin{align}
 \nabla^jm_t^{(p)}(0)&=\nabla^j\widehat F_t(0),\quad 0\leq j\leq p,\label{eq:matching}\\
 \abs{\widehat F_t(v)-m_t^{(p)}(v)}&\leq\frac{L_{p+1}}{(p+1)!}\norm v^{p+1},\label{eq:model-value}\\
 \norm{\nabla\widehat F_t(v)-\nabla m_t^{(p)}(v)}&\leq\frac{L_{p+1}}{p!}\norm v^p.\label{eq:model-grad}
\end{align}
\end{theorem}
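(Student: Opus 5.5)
The plan is to reduce everything to the residual. By \eqref{eq:model},
\[
\widehat F_t(v)-m_t^{(p)}(v)=\widehat\Ecal_t(v)-T_p\widehat\Ecal_t(v),
\]
because $\widehat F_t-\widehat\Acal_t=\widehat\Ecal_t$. So the difference between the true objective and the model is exactly the order-$p$ Taylor remainder of the single function $\widehat\Ecal_t$. The predictor $\widehat\Acal_t$ drops out entirely. Only the hypothesis on $\nabla^{p+1}\widehat\Ecal_t$ is needed, together with the stated $C^{p+1}$ regularity of $\widehat F_t$ and $\widehat\Acal_t$, which makes $\widehat\Ecal_t\in C^{p+1}$.

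\textbf{Matching \eqref{eq:matching}.} Set $R(v):=\widehat\Ecal_t(v)-T_p\widehat\Ecal_t(v)$. For a $C^{p+1}$ function $G$ and $0\le j\le p$, the derivatives satisfy $\nabla^jT_pG(0)=\nabla^jG(0)$. This holds because the $k$-th homogeneous term $\nabla^kG(0)[v]^k/k!$ has $j$-th derivative at $0$ equal to $\nabla^kG(0)$ when $k=j$ and $0$ otherwise, using symmetry of the derivative tensors. Hence $\nabla^jR(0)=0$, and therefore $\nabla^jm_t^{(p)}(0)=\nabla^j\widehat F_t(0)-\nabla^jR(0)=\nabla^j\widehat F_t(0)$.

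\textbf{Value bound \eqref{eq:model-value}.} For $\norm v\le\Delta_t$, the segment $\{sv:s\in[0,1]\}$ lies in the trial ball. Apply Taylor's theorem with integral remainder to $\phi(s)=\widehat\Ecal_t(sv)$:
\[
R(v)=\int_0^1\frac{(1-s)^p}{p!}\,\nabla^{p+1}\widehat\Ecal_t(sv)[v]^{p+1}\dd s.
\]
Bound the integrand by $L_{p+1}\norm v^{p+1}$ using the operator norm. Then $\int_0^1(1-s)^p/p!\,\dd s=1/(p+1)!$ gives the constant.

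\textbf{Gradient bound \eqref{eq:model-grad}.} Note that $\nabla T_p\widehat\Ecal_t(v)=\sum_{j=1}^{p}\nabla^j\widehat\Ecal_t(0)[v]^{j-1}/(j-1)!$, which is the order-$(p-1)$ Taylor polynomial of $\nabla\widehat\Ecal_t$. So $\nabla R(v)$ is the order-$(p-1)$ remainder of the vector field $\nabla\widehat\Ecal_t$. Applying the same integral formula gives
\[
\nabla R(v)=\int_0^1\frac{(1-s)^{p-1}}{(p-1)!}\nabla^{p+1}\widehat\Ecal_t(sv)[v]^p\dd s,
\]
with the tensor contracted in $p$ slots and leaving a vector. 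Its norm is at most $L_{p+1}\norm v^p/p!$.

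\textbf{Main obstacle.} The only delicate point is bookkeeping. One must check the identity $\nabla T_pG=T_{p-1}\nabla G$, which needs symmetric derivative tensors and so uses $C^{p+1}$ regularity. One must also check that the operator-norm convention for $\nabla^{p+1}\widehat\Ecal_t$ bounds the partial contraction $[v]^p$ by $\norm v^p$. Both are standard, so the proof is short.
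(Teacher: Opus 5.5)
Your proof is correct and is essentially the paper's argument: the same reduction $\widehat F_t-m_t^{(p)}=\widehat\Ecal_t-T_p\widehat\Ecal_t$, the integral-remainder bound for the value, and the observation that $\nabla T_p\widehat\Ecal_t$ is the degree-$(p-1)$ Taylor polynomial of $\nabla\widehat\Ecal_t$ for the gradient bound. The paper states this commutation for every $0\le j\le p$ and then specializes to $j=1$, while you write out the $j=1$ integral kernel explicitly; the argument is the same.
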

\begin{proof}
Because $m_t^{(p)}=\widehat\Acal_t+T_p\widehat\Ecal_t$ and
$\widehat F_t=\widehat\Acal_t+\widehat\Ecal_t$, derivative matching follows from the Taylor polynomial.  Taylor's theorem with integral remainder gives
\begin{equation}
 \abs{\widehat\Ecal_t(v)-T_p\widehat\Ecal_t(v)}
 \leq\frac{L_{p+1}}{(p+1)!}\norm v^{p+1}.
\end{equation}
Differentiation commutes with Taylor truncation: for $0\leq j\leq p$, $\nabla^jT_p\widehat\Ecal_t$ is the degree-$(p-j)$ Taylor polynomial of $\nabla^j\widehat\Ecal_t$. Applying the integral remainder to this tensor-valued function bounds its error by $L_{p+1}\norm v^{p+1-j}/(p+1-j)!$. The case $j=1$ gives~\eqref{eq:model-grad}.
\end{proof}

Theorem~\ref{thm:model-order} does not require target proximity. Near
matching, likelihood-ratio moments further bound the residual derivatives.

\subsection{Residual derivatives near matching}
\label{sec:calibration-size}

\begin{proposition}[Residual derivative bounds]\label{prop:residual-tensors}
Under the normalization and smoothness assumptions of Theorem~\ref{thm:normal-form}, let $R^-(\theta)=D_f(P\|Q_\theta)-a_fD_{\alpha_f}(P\|Q_\theta)$. Suppose differentiation under the integral is valid, $p/q_\theta\in[1-\delta,1+\delta]$, and the score and log-density Hessian are uniformly bounded. Then
\begin{align}
 \norm{\nabla R^-(\theta)}&\leq C_1\int q_\theta\abs{p/q_\theta-1}^3\dd\mu,\\
 \norm{\nabla^2R^-(\theta)}_\op&\leq C_2\int q_\theta\abs{p/q_\theta-1}^2\dd\mu.
\end{align}
Analogous bounds hold for $R^+(\theta)=D_f(Q_\theta\|P)-a_fD_{\alpha_f}(Q_\theta\|P)$ when $q_\theta/p\in[1-\delta,1+\delta]$.
If $\nabla^3\log q_\theta$ is also uniformly bounded and third-order
differentiation under the integral is valid, then
\begin{equation}
 \norm{\nabla^3R^\sigma(\theta)}_\op
 \leq C_3\int q_\theta\abs{u_\sigma-1}\dd\mu,
 \qquad u_-=p/q_\theta,\quad u_+=q_\theta/p.
 \label{eq:residual-third-bound}
\end{equation}
The constants depend on the density-derivative bounds, $\delta$, and
$M_{f,\delta}$, not on the parameter step.
\end{proposition}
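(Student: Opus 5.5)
Write the residual as a single generator integral. By Theorem~\ref{thm:normal-form}, $h:=h_f=f-a_ff_{\alpha_f}$ satisfies $h^{(j)}(1)=0$ for $j\le3$, and $a_fD_{\alpha_f}(P\|Q)=\int q\,a_ff_{\alpha_f}(p/q)\dd\mu$. Hence
\[
R^-(\theta)=\int q_\theta\,h(u)\dd\mu,\qquad u=p/q_\theta .
\]
With $\ell=\log q_\theta$ we have $\nabla q_\theta=q_\theta\nabla\ell$ and $\nabla u=-u\nabla\ell$. Differentiating under the integral sign, which is allowed by assumption, gives
\[
\nabla R^-=\int q_\theta\,\phi_1(u)\,\nabla\ell\dd\mu,\qquad \phi_1(u):=h(u)-uh'(u).
\]
Differentiating once more,
\[
\nabla^2R^-=\int q_\theta\bigl[\phi_1(u)(\nabla\ell\nabla\ell^\top+\nabla^2\ell)+u^2h''(u)\nabla\ell\nabla\ell^\top\bigr]\dd\mu,
\]
using $\phi_1'(u)=-uh''(u)$. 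The key step is to track, for each scalar coefficient, its order of vanishing at $u=1$.

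From $h^{(j)}(1)=0$ for $j\le3$ we get $\phi_1(1)=\phi_1'(1)=\phi_1''(1)=0$, since $\phi_1''=-h''-uh'''$. Taylor's theorem on $[1-\delta,1+\delta]$ then gives
\[
\abs{\phi_1(u)}\le C\abs{u-1}^3,
\]
with $C$ depending on $\sup\abs{h^{(4)}}=M_{f,\delta}$ and on $\delta$. Combined with the uniform bound on the score, this yields the gradient estimate. For the Hessian, the coefficient $u^2h''(u)$ vanishes to second order, so $\abs{u^2h''(u)}\le C\abs{u-1}^2$. The $\phi_1$ terms are $O(\abs{u-1}^3)\le 2\delta\,O(\abs{u-1}^2)$. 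Bounding $\nabla\ell$ and $\nabla^2\ell$ uniformly gives the Hessian estimate with $C_2$ depending on the score and Hessian bounds, $\delta$ and $M_{f,\delta}$.

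For $R^+$, the same argument applies with $R^+=\int p\,h^*(u_+)\dd\mu$. Here $h^*(v)=v\,h(1/v)$ is the perspective, and $u_+=q_\theta/p$. We have $\nabla u_+=u_+\nabla\ell$, and $p$ does not depend on $\theta$. The only new check is that $h^*$ inherits a fourth-order zero at $1$, which follows from $h$'s zero by the chain rule. Its fourth derivative on $[1-\delta,1+\delta]$ is bounded by a constant depending on $\delta$ and $M_{f,\delta}$, provided $1/v$ stays in the interval where $h$ is controlled. This is the one spot where I would shrink $\delta$ or enlarge the constant, since $[1/(1+\delta),1/(1-\delta)]\not\subset[1-\delta,1+\delta]$. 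Rewriting $\int p\,\abs{u_+-1}^k\dd\mu=\int q_\theta\abs{u_+-1}^k u_+^{-1}\dd\mu$ with $u_+^{-1}\le(1-\delta)^{-1}$ puts the bounds in the stated $q_\theta$-weighted form.

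For the third derivative, differentiate once more. This produces terms that are products of up to three factors of $\nabla\ell$, $\nabla^2\ell$, $\nabla^3\ell$, multiplied by $\phi_1$, $u^2h''$, and the new coefficient $\frac{d}{du}(u^2h'')\cdot u=2u^2h''+u^3h'''$. Since $h'''(1)=0$, this new coefficient vanishes to first order, so every coefficient is $O(\abs{u-1})$. The uniform bounds on $\nabla^k\ell$ for $k\le3$ then give \eqref{eq:residual-third-bound}.

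The main obstacle is the bookkeeping that the \emph{lowest} vanishing order among all coefficients at each derivative level is exactly $3,2,1$. In particular, no coefficient with a nonvanishing value at $u=1$ must appear. This relies on the cancellation $\phi_1'(u)=-uh''(u)$, which absorbs the $h'$ terms, and I would verify it carefully before bounding.
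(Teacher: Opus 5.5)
Your forward-orientation argument is essentially the paper's proof. You have the same formulas $\nabla R^-=\int q_\theta(h-uh')\nabla\ell\dd\mu$ and $\nabla^2R^-=\int q_\theta[(h-uh'+u^2h'')\nabla\ell\nabla\ell^\top+(h-uh')\nabla^2\ell]\dd\mu$, and the same vanishing-order bounds. Your third-order coefficients are an unsimplified but valid list: the paper combines them into $h-uh'-u^3h'''$, $h-uh'+u^2h''$, $h-uh'$, but each of your raw coefficients is already $O(\abs{u-1})$.

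\textbf{The gap is in the reverse orientation.} Your representation $R^+=\int p\,h^*(u_+)\dd\mu$ with $h^*(v)=vh(1/v)$ is incorrect. Since $p\,u_+=q_\theta$,
\[
 \int p\,h^*(q_\theta/p)\dd\mu=\int q_\theta\,h(p/q_\theta)\dd\mu ,
\]
which is the forward residual, not $R^+$. Besides computing the wrong object, it evaluates $h$ at $p/q_\theta\in[1/(1+\delta),1/(1-\delta)]$. That range lies partly outside the interval on which $f\in C^4$ is assumed and $M_{f,\delta}$ is defined.

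The range problem you flag is created by this slip, and your remedy is not available. The value of $\delta$ is fixed by the hypothesis $q_\theta/p\in[1-\delta,1+\delta]$. Constants depending on $h$ outside that interval would contradict the stated dependence on $M_{f,\delta}$ and $\delta$, and $h$ need not even be $C^4$ there.

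\textbf{No perspective is needed.} By definition $D_f(Q_\theta\|P)=\int p\,f(q_\theta/p)\dd\mu$, so $R^+=\int p\,h(u_+)\dd\mu$. Using $\nabla u_+=u_+\nabla\ell$ and $p\,u_+=q_\theta$, you get $\nabla R^+=\int q_\theta h'(u_+)\nabla\ell\dd\mu$ and $\nabla^2R^+=\int q_\theta[(h'+u_+h'')\nabla\ell\nabla\ell^\top+h'\nabla^2\ell]\dd\mu$. The third-order coefficients are $h'+3u_+h''+u_+^2h'''$, $h'+u_+h''$, and $h'$. These are all evaluated at $u_+\in[1-\delta,1+\delta]$. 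They come already $q_\theta$-weighted, so your reweighting by $u_+^{-1}\le(1-\delta)^{-1}$ is unnecessary. They are of orders $3,2,1$ in $\abs{u_+-1}$, and this is exactly the paper's reverse-case computation.

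If you want to keep a perspective, the correct form is $R^+=\int q_\theta h^*(u_-)\dd\mu$. Taylor-expand it on $[1/(1+\delta),1/(1-\delta)]$, where $h^*$ only calls $h$ on $[1-\delta,1+\delta]$. Then finish with $\abs{u_--1}\le\abs{u_+-1}/(1-\delta)$.

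Finally, the ``main obstacle'' is milder than you suggest. $\nabla^kR^\sigma$ involves only $h,\dots,h^{(k)}$ with bounded weights, and $h^{(j)}(u)=O(\abs{u-1}^{4-j})$, so the orders $3,2,1$ are automatic. The identity $\phi_1'=-uh''$ is a convenience, not a necessity.
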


The proof is given in Appendix~\ref{app:calibration-size}.
These likelihood-ratio bounds complement the step-based guarantees of
Theorem~\ref{thm:model-order}. They do not replace residual calibration
when the targets are heterogeneous.

\section{CAL-\texorpdfstring{$\alpha$}{alpha}-MODM}
\label{sec:algorithm}

\subsection{Certified trust-region steps}

At a certified iteration, approximately minimize $m_t^{(p)}$ over $\norm v\leq\Delta_t$ and form
\begin{equation}
 \operatorname{pred}_t=m_t^{(p)}(0)-m_t^{(p)}(s_t),\quad
 \operatorname{ared}_t=F(\theta_t)-F(\theta_t+s_t),\quad
 \rho_t=\frac{\operatorname{ared}_t}{\operatorname{pred}_t}.
 \label{eq:ratio}
\end{equation}
Let $g_t:=\nabla F(\theta_t)=\nabla m_t^{(p)}(0)$.  We require the approximate trust-region solution to achieve a fixed fraction of Cauchy decrease:
\begin{equation}
 \operatorname{pred}_t\geq\kappa_c\norm{g_t}
 \min\left\{\Delta_t,\frac{\norm{g_t}}{\kappa_H}\right\},
 \qquad \kappa_c\in(0,1],
 \label{eq:cauchy}
\end{equation}
where $\kappa_H>0$ uniformly bounds $\norm{\nabla^2m_t^{(p)}(v)}_\op$ on the trust region. A safeguarded gradient step gives $\kappa_c=1/2$ (Section~\ref{app:tr-assumptions}). Larger values require verified decrease.

For $0<\eta_1<\eta_2<1$, $0<\gamma_1<1<\gamma_2$, use the explicit radius update
\begin{equation}
 \Delta_{t+1}=\begin{cases}
  \gamma_1\Delta_t,&\rho_t<\eta_1,\\
  \min\{\gamma_2\Delta_t,\Delta_{\max}\},&\rho_t\geq\eta_2,\\
  \Delta_t,&\text{otherwise}.
 \end{cases}
 \label{eq:radius-update}
\end{equation}

\begin{center}
\fbox{\begin{minipage}{\dimexpr\linewidth-2\fboxsep-2\fboxrule\relax}
\textbf{Algorithm 1: CAL-$\alpha$-MODM}
\begin{enumerate}[leftmargin=1.4em,itemsep=0.05em,topsep=0.15em]
 \item Given $\theta_0$ and $0<\Delta_0\leq\Delta_{\max}$, normalize $f$, compute $(a_f,\alpha_f)$, and construct $(c^\sigma_{\lambda,\alpha_f},\omega^\sigma_{\lambda,\alpha_f})$.
 \item At the certified point $\theta_t$, evaluate $F$ and the residual jet through order $p$.  Stop only if $\norm{\nabla F(\theta_t)}\leq\eps$.
 \item Build~\eqref{eq:model} and compute $\norm{s_t}\leq\Delta_t$ satisfying~\eqref{eq:cauchy}.
 \item Evaluate the true scalarized objective at $\theta_t+s_t$.  Set $\theta_{t+1}=\theta_t+s_t$ if $\rho_t\geq\eta_1$ and $\theta_{t+1}=\theta_t$ otherwise.
 \item Update $\Delta_{t+1}$ by~\eqref{eq:radius-update} and repeat.
\end{enumerate}
\end{minipage}}
\end{center}

\subsection{Calibration order and reuse}

\paragraph{Calibration order.}
Order $p=1$ requires residual values and gradients. Order $p=2$ adds
Hessians and permits local quadratic convergence under
Theorem~\ref{thm:local}. Order $p=3$ permits local cubic convergence but
is more expensive. A p1 inner solver may still use predictor Hessians.
With $p=2$, the retained nonlinear remainder can improve local accuracy
without a true third-derivative tensor (Proposition~\ref{prop:leading-error}).

\paragraph{Inner solves and reuse.}
E8/E9 use regularized Newton iterations, trust-region projection,
Armijo backtracking, and a steepest-descent comparator.
Section~\ref{app:implementation} gives the settings and relates the
comparator to Cauchy decrease. The residual jet is reused across inner
queries and rejected trials at a fixed anchor. A smaller radius tightens
the error bound. Accepted steps require fresh calibration.

\paragraph{Computational savings.}
Cached aggregate values on a reusable support or grid remove the target
sum from predictor queries. New nodes may require all targets to be
evaluated again. Calibration and acceptance still use the true objective,
so predictor savings must offset this cost. The aggregate is reused for
a fixed preference and rebuilt when the preference changes.

\subsection{Safeguarded lazy calibration}
\label{app:lazy}

After a successful certified step $s_t$, set
$z_t=\theta_t+s_t$ and retain $g_t=\nabla F(\theta_t)$.
Before the next calibration, allow at most $K_{\rm lazy}$ extra trials
using the fixed-origin, value-matched predictor
\[
 m_{z_t}^{(0)}(v)=\Acal(z_t+v)+F(z_t)-\Acal(z_t).
\]
A trial from $z_{t,j}=z_t+v_j$ queries this model at $v_j+u_j$.
It is not recentered without recalibration. Accept a lazy increment
only if, for fixed positive constants,
\begin{align}
 F(z_{t,j})-F(z_{t,j}+u_j)&\geq c_{\rm lazy}\norm{u_j}^2,\notag\\
 \sum_j\norm{u_j}&\leq
 \min\{\kappa_{\rm path}\norm{g_t},\kappa_{\rm move}\norm{s_t}\}.
 \label{eq:lazy-safeguards}
\end{align}
The sum is over accepted increments within this block. Its endpoint
becomes $\theta_{t+1}$ and receives a fresh residual jet and true-gradient
stopping test. No lazy block follows a rejected certified step, and
lazy trials do not alter the radius update~\eqref{eq:radius-update}.
The step-length budget links uncalibrated motion to certified progress.
An anchor-gradient budget alone does not provide this link.

\begin{corollary}[Lazy-search complexity]
\label{cor:lazy}
Under Assumption~\ref{ass:tr}, including Lipschitz continuity on the lazy
segments, the preceding block protocol and~\eqref{eq:lazy-safeguards}
give vanishing gradients along the complete accepted sequence of a
zero-tolerance run, or finite exact stationarity. With tolerance
$\eps>0$, the certified jet complexity is $O(\eps^{-2})$ and the true
scalarized value complexity is $O((K_{\rm lazy}+1)\eps^{-2})$.
\end{corollary}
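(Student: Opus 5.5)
The plan is to reduce Corollary~\ref{cor:lazy} to the standard certified trust-region analysis of Assumption~\ref{ass:tr}, treating each lazy block as a monotone perturbation attached to a certified iteration. Lazy increments only lower $F$ and do not change the radius rule. Their total length is bounded by certified quantities. So the certified sequence should inherit the usual properties: the Cauchy-decrease lower bound~\eqref{eq:cauchy}, the model-error bound of Theorem~\ref{thm:model-order} (which gives $\rho_t\to1$ as $\Delta_t\to0$ whenever $\|g_t\|$ is bounded away from zero), and a lower bound on the radius at non-stationary certified iterations.

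\textbf{Step 1 (monotonicity and bounded total decrease).} At a successful certified step, $F(\theta_t)-F(z_t)\geq\eta_1\operatorname{pred}_t\geq\eta_1\kappa_c\norm{g_t}\min\{\Delta_t,\norm{g_t}/\kappa_H\}$. Each accepted lazy increment satisfies $F(z_{t,j})-F(z_{t,j}+u_j)\geq c_{\rm lazy}\norm{u_j}^2\geq0$, so $F(\theta_{t+1})\leq F(z_t)$. Rejected certified steps leave $\theta$ unchanged and have no lazy block. Telescoping against $F\geq0$ (divergences are nonnegative) bounds $\sum_{t\ \rm successful}\norm{g_t}\min\{\Delta_t,\norm{g_t}/\kappa_H\}$ by $F(\theta_0)/(\eta_1\kappa_c)$.

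\textbf{Step 2 (radius lower bound).} This is the unchanged trust-region argument. Equation~\eqref{eq:model-value} with $p\geq1$ gives $\abs{\operatorname{ared}_t-\operatorname{pred}_t}\leq L\Delta_t^{2}/2$, so $\rho_t\geq\eta_2$ once $\Delta_t\leq c\norm{g_t}$. Because lazy trials do not touch~\eqref{eq:radius-update}, this yields $\Delta_t\geq\min\{\Delta_0,\gamma_1c\,\eps\}$ while $\norm{g_t}>\eps$. Step 1 then bounds the number of successful certified iterations by $O(\eps^{-2})$. The standard comparison of unsuccessful to successful counts (each failure shrinks $\Delta$ by $\gamma_1$, each success grows it by at most $\gamma_2$, and $\Delta$ stays above the floor) bounds the total by $O(\eps^{-2})$. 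Every certified iteration uses one residual jet, which gives the jet complexity. Each certified iteration also spends one true value for the trial plus at most $K_{\rm lazy}$ lazy trial values, giving $O((K_{\rm lazy}+1)\eps^{-2})$.

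\textbf{Step 3 (the whole accepted sequence, $\eps=0$).} This is the main obstacle, since the stopping test and the radius analysis only see certified points, while the claim concerns every accepted point, including interior lazy points $z_{t,j}$. I would first show, as above, that $\liminf\norm{g_t}=0$ and then $\lim\norm{g_t}=0$ along certified points. The usual $\lim$ argument (Lipschitz gradient on the accepted path plus summable decrease over excursions) works because lazy moves add only nonincreasing $F$ and path length bounded by $\kappa_{\rm move}\norm{s_t}\leq\kappa_{\rm move}\Delta_t$. To transfer to lazy points, note that for any $z$ in block $t$,
\[
\norm{z-\theta_t}\leq\norm{s_t}+\sum_j\norm{u_j}\leq\norm{s_t}+\kappa_{\rm path}\norm{g_t}.
\]
The Lipschitz constant $L_g$ on lazy segments then gives $\norm{\nabla F(z)}\leq\norm{g_t}+L_g(\norm{s_t}+\kappa_{\rm path}\norm{g_t})$. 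On successful steps, $\norm{s_t}\leq\Delta_t$ and $\sum_{\rm succ}\norm{g_t}\min\{\Delta_t,\norm{g_t}/\kappa_H\}<\infty$. Showing $\norm{s_t}\to0$ along successful steps therefore needs care. I would argue by contradiction: if $\norm{s_t}\geq\beta$ infinitely often, then $\operatorname{pred}_t$ is bounded below. To see this, use the Cauchy bound when $\norm{g_t}$ is not small, and otherwise use the model's Lipschitz structure, under which $\operatorname{pred}_t\geq0$ and $F$ bounded below force $\operatorname{ared}_t\to0$ along the subsequence. If this is delicate, I would instead observe that the move budget $\kappa_{\rm path}\norm{g_t}$ alone already gives $\norm{z-z_t}\to0$, and handle $\norm{\nabla F(z_t)}$ through the same Lipschitz bound combined with $\norm{\nabla F(\theta_{t+1})}\to0$ on the next certified point. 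If the gradient vanishes exactly at some certified point, the run stops there, which is the finite exact stationarity alternative.
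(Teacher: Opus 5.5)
Your proposal follows the paper's proof. Lazy increments only lower $F$ and never touch~\eqref{eq:radius-update}, so the radius floor and successful-iteration count of Theorem~\ref{thm:global} carry over. Each certified iteration costs one jet and at most $K_{\rm lazy}+1$ true values. The excursion argument runs at certified anchors because, for $\norm{g_t}\geq\eps$, \eqref{eq:decrease-length} still gives $F(\theta_t)-F(\theta_{t+1})\geq b_\eps\norm{s_t}$, while the block's path length is at most $(1+\kappa_{\rm move})\norm{s_t}$.

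In Step~3, however, drop your first transfer route. The assumptions do not imply $\norm{s_t}\to0$: the Cauchy condition only bounds $\operatorname{pred}_t$ from below. For example, with an exact model such as $\Ecal=0$, an accepted step can slide a fixed distance along a nearly flat valley with tiny decrease. Your contradiction sketch also cannot work, because $\kappa_c\norm{g_t}\min\{\Delta_t,\norm{g_t}/\kappa_H\}\to0$ as $\norm{g_t}\to0$, whatever the step length.

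Your fallback is the correct argument and is exactly the paper's. Every lazy point $z$ of block $t$, including $z_t$, lies within path length $\kappa_{\rm path}\norm{g_t}$ of the endpoint $\theta_{t+1}$. Hence $\norm{\nabla F(z)}\leq\norm{g_{t+1}}+L_g\kappa_{\rm path}\norm{g_t}\to0$.
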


E4 uses a local curvature estimate and its original path budget, without
the additional step-length budget above. Its stationarity results are
empirical and are not inferred from this corollary.

The proof of Corollary~\ref{cor:lazy} is in Appendix~\ref{app:lazy-proof}.

\section{Convergence and Pareto guarantees}
\label{sec:theory}

\subsection{Exact-oracle assumptions and sufficient decrease}
\label{app:tr-assumptions}

\begin{assumption}[Trust-region regularity]
\label{ass:tr}
Fix $\lambda$ and $\sigma$, set $\Theta=\R^d$, use exact objective and derivative oracles, and assume:
\begin{enumerate}[label=(A\arabic*),leftmargin=1.8em,itemsep=0.1em]
 \item The level set $\{\theta:F(\theta)\leq F(\theta_0)\}$ is compact, $F\geq F_{\inf}$ there, and $\nabla F$ is $L_g$-Lipschitz, with $L_g>0$, on a neighborhood containing every trial segment.
 \item The objective and predictor satisfy Theorem~\ref{thm:model-order} on every trial ball, with a uniform residual-derivative bound for $\Delta_t\leq\Delta_{\max}$.
 \item $\sup_{t,\norm v\leq\Delta_t}\norm{\nabla^2m_t^{(p)}(v)}_\op\leq\kappa_H$.
 \item The computed step has $\norm{s_t}\leq\Delta_t$ and satisfies~\eqref{eq:cauchy} for fixed $\kappa_c\in(0,1]$.
 \item The acceptance decision uses~\eqref{eq:ratio}, with $\operatorname{pred}_t>0$, and the radius follows~\eqref{eq:radius-update} for fixed $0<\eta_1<\eta_2<1$ and $0<\gamma_1<1<\gamma_2$.
\end{enumerate}
\end{assumption}

Condition~\eqref{eq:cauchy} is attainable for the nonlinear model itself.
For $g_t\ne0$, let $\tau_t=\min\{\Delta_t/\norm{g_t},1/\kappa_H\}$
and $s_t^{\rm C}=-\tau_tg_t$. The Hessian bound along this segment gives
\[
 m_t^{(p)}(0)-m_t^{(p)}(s_t^{\rm C})
 \geq\tau_t\norm{g_t}^2-\tfrac12\kappa_H\tau_t^2\norm{g_t}^2
 \geq\tfrac12\norm{g_t}\min\{\Delta_t,\norm{g_t}/\kappa_H\}.
\]
Thus any feasible step with at least this model decrease satisfies
\eqref{eq:cauchy} with $\kappa_c=1/2$. Section~\ref{app:implementation}
describes a backtracking safeguard when $\kappa_H$ is not known.

For every $p\geq1$, Theorem~\ref{thm:model-order} implies fully linear bounds
\begin{align}
 \abs{F(\theta_t+v)-m_t^{(p)}(v)}&\leq\kappa_f\Delta_t^2,\\
 \norm{\nabla F(\theta_t+v)-\nabla m_t^{(p)}(v)}&\leq\kappa_g\Delta_t
\end{align}
for $\norm v\leq\Delta_t$, choosing positive constants $\kappa_f,\kappa_g$. For $p=2$ the sharper orders are $(3,2)$, and for $p=3$ they are $(4,3)$. An exact stationary endpoint is extended constantly when discussing asymptotic rates. Proofs of the global and local convergence results are in Appendix~\ref{app:trust}. The global result concerns the unconstrained Euclidean iteration. A bounded affine probability chart is covered by the local result only while its boundary safeguard is inactive.

\subsection{Global convergence and Pareto stationarity}

\begin{theorem}[Global convergence and complexity]
\label{thm:global}
Under Assumption~\ref{ass:tr}, for fixed $\lambda$ and orientation, the zero-tolerance version of Algorithm~1 either stops at an exact stationary point or satisfies
\begin{equation}
 \lim_{t\to\infty}\norm{\nabla F_\lambda^\sigma(\theta_t)}=0.
\end{equation}
For stopping tolerance $\eps>0$, there are problem-dependent constants $C_0,C_1>0$ such that an iterate with
$\norm{\nabla F_\lambda^\sigma}\leq\eps$ is obtained within
\begin{equation}
 N(\eps)\leq C_0+C_1\eps^{-2}
 \label{eq:complexity}
\end{equation}
certified iterations and true scalarized trial evaluations. With explicit summation, each scalarized value or order-$p$ jet call requires $m$ component calls. The constants in~\eqref{eq:complexity} may depend on $m$.
\end{theorem}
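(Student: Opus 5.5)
This is the standard trust-region argument with fully linear models, as in the model-management literature, specialized to the constants of Assumption~\ref{ass:tr}. The first step is a \emph{success-at-small-radius} lemma. Suppose $\norm{g_t}\geq\epsilon_g>0$ and $\Delta_t\leq\min\{\epsilon_g/\kappa_H,\ \kappa_c(1-\eta_2)\epsilon_g/\kappa_f\}$. The fully linear value bound (from Theorem~\ref{thm:model-order} via (A2)) gives $\abs{\operatorname{ared}_t-\operatorname{pred}_t}\leq 2\kappa_f\Delta_t^2$, since both $F(\theta_t)=m_t^{(p)}(0)$ and $\abs{F(\theta_t+s_t)-m_t^{(p)}(s_t)}\leq\kappa_f\Delta_t^2$ hold. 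The Cauchy condition~\eqref{eq:cauchy} gives $\operatorname{pred}_t\geq\kappa_c\epsilon_g\Delta_t$. Hence $\abs{\rho_t-1}\leq 2\kappa_f\Delta_t/(\kappa_c\epsilon_g)$, which is at most $1-\eta_2$ after adjusting the constant to $2\kappa_f$. So the iteration is very successful and the radius does not shrink. From the update~\eqref{eq:radius-update}, while $\norm{g_t}\geq\epsilon_g$ we get $\Delta_t\geq\Delta_{\min}(\epsilon_g):=\min\{\Delta_0,\gamma_1 c\,\epsilon_g\}$, where $c$ is the constant above.

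For complexity with tolerance $\eps$, take $\epsilon_g=\eps$. Each successful iteration ($\rho_t\geq\eta_1$) decreases $F$ by at least $\eta_1\kappa_c\eps\min\{\Delta_{\min}(\eps),\eps/\kappa_H\}\geq c'\eps^2$ once $\eps$ is small; for larger $\eps$ the bound is absorbed into $C_0$. Since all accepted iterates remain in the compact level set where $F\geq F_{\inf}$ by (A1), the number of successful iterations is at most $(F(\theta_0)-F_{\inf})/(c'\eps^2)$. Unsuccessful iterations are counted by the usual radius bookkeeping. Each unsuccessful step multiplies $\Delta$ by $\gamma_1$, each successful step multiplies it by at most $\gamma_2$, and $\Delta$ stays in $[\Delta_{\min}(\eps),\Delta_{\max}]$. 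This gives $N_U\leq \frac{\log\gamma_2}{\abs{\log\gamma_1}}N_S+\frac{\log(\Delta_0/\Delta_{\min}(\eps))}{\abs{\log\gamma_1}}$. The logarithmic term is $O(\log\eps^{-1})\leq O(\eps^{-2})$, which yields~\eqref{eq:complexity}. Each certified iteration uses one jet evaluation and one trial value, so the same bound counts both. Explicit summation multiplies each call by $m$, and the constants $L_g,\kappa_f,\kappa_H,F_{\inf}$ may depend on $m$ through the scalarization.

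For the zero-tolerance limit, I would proceed in two stages. First, $\liminf\norm{g_t}=0$: otherwise the complexity argument with $\epsilon_g=\liminf/2$ bounds the number of successful iterations, so eventually all iterations fail and $\Delta_t\to0$, contradicting the radius floor. The upgrade to $\lim$ is the main obstacle. I would use the classical Thomas/Moré--Sorensen argument. Suppose a subsequence has $\norm{g_{t_i}}\geq2\epsilon$, and let $\ell_i$ be the first later index with $\norm{g_{\ell_i}}<\epsilon$. Between $t_i$ and $\ell_i$, successful iterations each decrease $F$ by at least $\eta_1\kappa_c\epsilon\min\{\Delta_t,\epsilon/\kappa_H\}$, and $F$ is monotone and bounded below. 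Hence $\sum\norm{s_t}$ over those successful steps tends to zero once $\Delta_t$ becomes the active term in the minimum. If $\epsilon/\kappa_H$ were active, each such step would decrease $F$ by a fixed amount, which can happen only finitely often. So $\norm{\theta_{\ell_i}-\theta_{t_i}}\to0$. The $L_g$-Lipschitz continuity of $\nabla F$ along trial segments (A1) then gives $\norm{g_{t_i}-g_{\ell_i}}\to0$, contradicting the gap of at least $\epsilon$. The delicate points are ensuring that the Lipschitz neighborhood covers the accepted path and treating finite termination at exact stationarity separately (iterates are extended constantly). Both follow from the assumptions as stated.
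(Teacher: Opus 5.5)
Your proposal is correct and follows essentially the same route as the paper. The shared steps are the small-radius success lemma with $c_\rho=\min\{1/\kappa_H,(1-\eta_2)\kappa_c/\kappa_f\}$, the radius floor $\gamma_1 c_\rho\eps$, counting successful iterations via the $c_s\eps^2$ decrease, counting unsuccessful ones by logarithmic radius bookkeeping, and upgrading $\liminf=0$ to a full limit by the Lipschitz excursion argument.

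The differences are cosmetic. Your factor $2\kappa_f$ is unnecessary: since $m_t^{(p)}(0)=F(\theta_t)$ exactly, $\abs{\operatorname{ared}_t-\operatorname{pred}_t}\leq\kappa_f\Delta_t^2$. The paper also avoids your case split on the active term by using $\min\{\Delta_t,\eps/\kappa_H\}\geq\Delta_t\min\{1,\eps/(\kappa_H\Delta_{\max})\}$, so each excursion forces a fixed decrease $b_\eps\eps/L_g$.
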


This is the standard smooth nonconvex trust-region exponent \citep{conn2000,cartis2010}. Higher $p$ improves local accuracy, not the global exponent or certified component-oracle factor $m$.

\begin{corollary}[Pareto stationarity]
\label{cor:pareto}
For $C^1$ components and $\lambda\in\ri(\Delta_m)$, every accumulation point in Theorem~\ref{thm:global} is Pareto stationary with a strictly positive multiplier. If each component is convex, that point is globally Pareto optimal.
\end{corollary}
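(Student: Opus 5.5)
The argument has three steps: pass stationarity to accumulation points, read off the Pareto multiplier, then use convexity for global optimality. Let $\bar\theta$ be an accumulation point of the iterates from Theorem~\ref{thm:global}. In the finite-stopping case, $\bar\theta$ is the exact stationary endpoint, extended constantly, so $\nabla F_\lambda^\sigma(\bar\theta)=0$ directly. Otherwise take a subsequence $\theta_{t_k}\to\bar\theta$. The iterates remain in the compact level set of (A1), because accepted steps satisfy $\rho_t\ge\eta_1$ with $\operatorname{pred}_t>0$ and so never increase $F$. Hence $\bar\theta$ lies in that set. Each $F_i^\sigma$ is $C^1$, so the gradient of the scalarization $\nabla F_\lambda^\sigma=\sum_i\lambda_i\nabla F_i^\sigma$ is continuous. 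Combining this with $\lim_t\norm{\nabla F_\lambda^\sigma(\theta_t)}=0$ gives $\sum_i\lambda_i\nabla F_i^\sigma(\bar\theta)=0$.

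Next I would interpret this identity as Pareto stationarity. Recall the standard definition: $\bar\theta$ is Pareto stationary if some $\mu\in\Delta_m$ satisfies $\sum_i\mu_i\nabla F_i^\sigma(\bar\theta)=0$. Equivalently, no direction $d$ has $\inner{\nabla F_i^\sigma(\bar\theta)}{d}<0$ for every $i$. Taking $\mu=\lambda$ works. Since $\lambda\in\ri(\Delta_m)$, every $\lambda_i>0$, so the multiplier is strictly positive. For the directional form, suppose such a $d$ existed. Then $0=\inner{\sum_i\lambda_i\nabla F_i^\sigma(\bar\theta)}{d}<0$, because every term is negative and every weight is positive. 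This is a contradiction.

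Finally, convexity gives global optimality. Assume each $F_i^\sigma$ is convex, and suppose some $\theta$ Pareto-dominates $\bar\theta$: $F_i^\sigma(\theta)\le F_i^\sigma(\bar\theta)$ for all $i$, with strict inequality for some $j$. The scalarization $F_\lambda^\sigma$ is convex and has zero gradient at $\bar\theta$, so $\bar\theta$ is a global minimizer of $F_\lambda^\sigma$ over $\R^d$. But $\lambda_j>0$ gives $F_\lambda^\sigma(\theta)<F_\lambda^\sigma(\bar\theta)$, a contradiction. Hence $\bar\theta$ is globally Pareto optimal.

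The main subtlety is the first step: keeping accumulation points inside the region where (A1) and the $C^1$ hypothesis hold, and covering the finite-stopping branch. Monotonicity of accepted iterates together with compactness of the level set handles this. Note also that strict positivity of $\lambda$ is essential in the last step. With a zero weight, only weak Pareto optimality would follow.
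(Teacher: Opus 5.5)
Your proposal is correct and matches the paper's proof essentially step for step. Both pass to the limit by continuity of $\sum_i\lambda_i\nabla F_i^\sigma$, rule out a common first-order descent direction using the positive weights, and combine the convexity inequality with $\lambda_i>0$ for global Pareto optimality. Your extra handling of the finite-stopping branch and of keeping accumulation points in the compact level set only makes explicit what the paper leaves implicit.
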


\subsection{Local convergence order}

\begin{theorem}[Local order]
\label{thm:local}
Let $p\in\{2,3\}$ and let $\theta^\star$ be a nondegenerate local minimizer:
$\nabla F(\theta^\star)=0$ and
$\nabla^2F(\theta^\star)\succeq\mu I$ for some $\mu>0$. Suppose $F$ and $\Acal$ are $C^{p+1}$.  Let $s_p(\theta)$ be the unique local stationary branch of $m_\theta^{(p)}$ furnished by Lemma~\ref{lem:local-branch}.  Assume the subproblem solver returns steps in the local neighborhood of Lemma~\ref{lem:local-branch} satisfying
\begin{equation}
 \norm{\nabla m_t^{(p)}(s_t)}\leq\kappa_s\norm{\nabla F(\theta_t)}^p.
 \label{eq:inner}
\end{equation}
For Algorithm~1 with zero stopping tolerance and fixed $\Delta_0\in(0,\Delta_{\max}]$, there exists
a neighborhood $U_{\Delta_0}$ of $\theta^\star$ such that, for $\theta_0\in U_{\Delta_0}$, the local steps are eventually
untruncated and accepted, and
\begin{equation}
 \norm{\theta_{t+1}-\theta^\star}
 \leq C_p\norm{\theta_t-\theta^\star}^p.
 \label{eq:qorder}
\end{equation}
The guaranteed local order is at least quadratic for $p=2$ and cubic for $p=3$.
\end{theorem}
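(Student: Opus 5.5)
The argument combines three pieces: a local error analysis of the exact model stationary point $s_p(\theta)$, a perturbation bound for the inexact step~\eqref{eq:inner}, and a trust-region argument showing that the step is eventually untruncated and accepted. Lemma~\ref{lem:local-branch} is used as stated: near $\theta^\star$, and for $\norm v$ small, $\nabla^2 m_\theta^{(p)}(v)\succeq(\mu/2)I$, and $m_\theta^{(p)}$ has a unique stationary point $s_p(\theta)$ in a fixed ball, depending continuously on $\theta$ with $s_p(\theta^\star)=0$.

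\textbf{Step 1: local error of the exact branch.} Write $e=\theta-\theta^\star$ and $v^\star=-e$, so that $\nabla\widehat F(v^\star)=\nabla F(\theta^\star)=0$. By~\eqref{eq:model-grad} of Theorem~\ref{thm:model-order}, applied on a ball of radius $O(\norm e)$ with the local bound $L_{p+1}$ on $\nabla^{p+1}\widehat\Ecal$ (finite because $F,\Acal\in C^{p+1}$ near $\theta^\star$),
\begin{equation*}
 \norm{\nabla m_\theta^{(p)}(v^\star)}\leq \frac{L_{p+1}}{p!}\norm e^p.
\end{equation*}
Strong monotonicity of $\nabla m^{(p)}_\theta$ with modulus $\mu/2$ on the local ball gives
\begin{equation*}
 \norm{s_p(\theta)-v^\star}\leq \frac{2}{\mu}\norm{\nabla m_\theta^{(p)}(v^\star)-\nabla m_\theta^{(p)}(s_p(\theta))},
\end{equation*}
hence
\begin{equation*}
 \norm{\theta+s_p(\theta)-\theta^\star}\leq \frac{2L_{p+1}}{\mu\,p!}\norm e^p.
\end{equation*}

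\textbf{Step 2: inexact steps.} The same monotonicity argument, applied to $s_t$ and $s_p(\theta_t)$, gives
\begin{equation*}
 \norm{s_t-s_p(\theta_t)}\leq \frac{2}{\mu}\norm{\nabla m_t^{(p)}(s_t)}\leq\frac{2\kappa_s}{\mu}\norm{\nabla F(\theta_t)}^p\leq\frac{2\kappa_s L_g^p}{\mu}\norm{e_t}^p.
\end{equation*}
Together with Step 1 this yields $\norm{\theta_t+s_t-\theta^\star}\leq C_p\norm{e_t}^p$ whenever the step is untruncated and accepted.

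\textbf{Step 3: untruncated and accepted.} This is the main obstacle, because it is where the trust-region mechanics enter.

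For truncation, Step 2 gives $\norm{s_t}\leq\norm{e_t}+C_p\norm{e_t}^p\leq 2\norm{e_t}$. This is below $\Delta_t$ provided $\Delta_t$ stays bounded below by some $\underline\Delta>0$.

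For acceptance, take $\kappa_c=1/2$ as in Section~\ref{app:tr-assumptions}, so that
\begin{equation*}
 \operatorname{pred}_t\geq\tfrac12\norm{g_t}\min\{\Delta_t,\norm{g_t}/\kappa_H\}.
\end{equation*}
If the actual step satisfies Cauchy decrease at this level, strong convexity of the model gives $\operatorname{pred}_t\gtrsim(\mu/4)\norm{s_t}^2$, and $\norm{s_t}\asymp\norm{e_t}\asymp\norm{g_t}$. By~\eqref{eq:model-value},
\begin{equation*}
 \abs{\operatorname{ared}_t-\operatorname{pred}_t}\leq \frac{L_{p+1}}{(p+1)!}\norm{s_t}^{p+1}.
\end{equation*}
Therefore $\abs{\rho_t-1}=O(\norm{s_t}^{p-1})\to0$, and $\rho_t\geq\eta_2$ once $\norm{e_t}$ is small.

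The remaining subtlety is the initial radius. If $\Delta_0<2\norm{e_0}$, the first steps may be truncated, so the local step of Steps 1--2 is not yet available. In that case I would still show $\rho_t\geq\eta_2$. The model then agrees with $F$ to order $\Delta_t^{p+1}$, while the Cauchy decrease is of order $\norm{g_t}\Delta_t$. Hence $\rho_t\geq\eta_2$ as long as $\Delta_t\lesssim\norm{g_t}^{1/p}$, and this holds throughout because $\Delta_t\leq\Delta_{\max}$ is fixed and $\theta_0$ is close to $\theta^\star$. Each accepted step also decreases $F$. By strong convexity of $F$ near $\theta^\star$, the iterates then stay in a sublevel neighborhood, which in turn is a small ball around $\theta^\star$.

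So I would choose $U_{\Delta_0}$ as a sublevel set $\{F\leq F(\theta^\star)+c\}$ intersected with a ball, with $c$ small depending on $\Delta_0$. On this set every step is accepted, the radius never shrinks, and it expands to at least $\min\{\Delta_0,\Delta_{\max}\}$. Hence $\norm{e_t}\ll\Delta_t$ after the first step, steps are thereafter untruncated, and Step 2 gives~\eqref{eq:qorder}. The case $p=3$ differs only in the exponent; the case $g_t=0$ is handled by the constant extension.
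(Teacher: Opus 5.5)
Your proof is correct and follows the paper's route. Lemma~\ref{lem:local-branch} with \eqref{eq:inner} puts $s_t$ within $O(e_t^p)$ of $s_p(\theta_t)$. Then $\operatorname{pred}_t=\Theta(e_t^2)$ is set against an $O(e_t^{p+1})$ value error, giving $\rho_t\to1$, and $U_{\Delta_0}$ is chosen small relative to $\Delta_0$.

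The one real variation is how you get the $O(e_t^p)$ error. The paper applies \eqref{eq:model-grad} at the trial point to bound $\norm{\nabla F(\theta_t+s_t)}$, then inverts through the true-objective equivalence $c_1e\leq\norm{\nabla F}$. You apply \eqref{eq:model-grad} at the ideal step $v^\star=-e$, where the true gradient vanishes, and invert through the model's strong monotonicity. Your version uses only the Lemma's Hessian bound and no lower bound on $\norm{\nabla F}$. It also splits $e_+$ cleanly into branch error plus inexactness error; otherwise the two are equivalent.

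Your paragraph on the case $\Delta_0<2\norm{e_0}$ is argued backwards. For a fixed radius, the condition $\Delta_t\lesssim\norm{g_t}^{1/p}$ gets harder, not easier, as $\theta_0\to\theta^\star$. It does hold at a genuinely truncated step, but only because then $\Delta_t<\norm{s_p(\theta_t)}\lesssim e_t$.

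The detour is unnecessary anyway. Since $U_{\Delta_0}$ may depend on $\Delta_0$, shrink it as the paper does, so that $2e_0<\Delta_0$, $C_pe^{p-1}\leq1$, and $\rho\geq\eta_1$ hold there. Then $\norm{s_t}\leq 2e_t<\Delta_0\leq\Delta_t$ at every step. Acceptance prevents the radius from shrinking, and contraction keeps the iterates in the ball, so no step is ever truncated.

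Finally, $\kappa_c$ is fixed by (A4), not yours to set to $1/2$. This is harmless: your strong-convexity bound $\operatorname{pred}_t\geq\frac{\mu}{4}\norm{s_t}^2-\norm{\nabla m_t^{(p)}(s_t)}\norm{s_t}$ already gives $\Theta(e_t^2)$ whatever $\kappa_c$ is.
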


The forcing condition~\eqref{eq:inner} provides a sufficient inner-accuracy requirement for our local-rate guarantee, in the spirit of inexact Newton methods \citep{dennis1974,dembo1982}. Lazy safeguards appear in Section~\ref{app:lazy}. Common-generator Bregman objectives are zero-residual special cases (Section~\ref{app:bregman}).

\subsection{Leading error and canonical cancellation}

\begin{proposition}[Leading local error and canonical cancellation]
\label{prop:leading-error}
Let $F,\Acal\in C^4$ near $\theta^\star$, with $\nabla F(\theta^\star)=0$
and $H_\star=\nabla^2F(\theta^\star)\succ0$. Set $\Ecal=F-\Acal$.
For order-two calibrated steps in the local neighborhood of
Theorem~\ref{thm:local}, impose the stronger forcing condition
$\norm{\nabla m_\theta^{(2)}(s)}=O(\norm{\nabla F(\theta)}^3)$.
Once untruncated and accepted, with $e=\theta-\theta^\star$,
\begin{equation}
 e_+:=\theta+s-\theta^\star
 =\tfrac12 H_\star^{-1}\nabla^3\Ecal(\theta^\star)[e,e]
   +O(\norm e^3).
 \label{eq:leading-local-error}
\end{equation}
If every positive-weight target equals $Q_{\theta^\star}$, canonical
$\alpha_f$ gives $\nabla^3\Ecal(\theta^\star)=0$ in either orientation,
provided the divergence integrals can be differentiated through order
four. The local rate is then at least cubic with $p=2$.
\end{proposition}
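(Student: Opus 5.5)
We first derive the error expansion from the optimality condition of the order-two model, and then show the canonical cancellation by a Taylor computation in the likelihood ratio.

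\textbf{Error expansion.} Write $\theta=\theta^\star+e$. By \eqref{eq:taylor-remainder},
\[
 m^{(2)}_\theta(v)=T_2\widehat F(v)+\bigl(\Acal(\theta+v)-T_2\widehat\Acal(v)\bigr).
\]
Hence
\[
 \nabla m^{(2)}_\theta(s)=\nabla F(\theta)+\nabla^2F(\theta)s+\tfrac12\nabla^3\Acal(\theta)[s,s]+O(\norm s^3),
\]
since the remainder $\Acal(\theta+v)-T_2\widehat\Acal(v)$ has gradient $\tfrac12\nabla^3\Acal(\theta)[v,v]+O(\norm v^3)$ by $C^4$. Expanding $F$ about $\theta^\star$ gives:
\begin{itemize}
 \item[] $\nabla F(\theta)=H_\star e+\tfrac12\nabla^3F(\theta^\star)[e,e]+O(\norm e^3)$;
 \item[] $\nabla^2F(\theta)=H_\star+\nabla^3F(\theta^\star)[e]+O(\norm e^2)$.
\end{itemize}
Lemma~\ref{lem:local-branch} and the forcing condition give $s=-e+O(\norm e^2)$ and $\nabla m^{(2)}_\theta(s)=O(\norm e^3)$. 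Substitute $s=-e+e_+$ with $e_+=O(\norm e^2)$ and collect second-order terms:
\[
 0=H_\star e_+ +\tfrac12\nabla^3F[e,e]-\nabla^3F[e,e]+\tfrac12\nabla^3\Acal[e,e]+O(\norm e^3).
\]
This gives $H_\star e_+=\tfrac12(\nabla^3F-\nabla^3\Acal)(\theta^\star)[e,e]+O(\norm e^3)$, which is \eqref{eq:leading-local-error} after inverting $H_\star$. Theorem~\ref{thm:local} guarantees that the steps are eventually untruncated and accepted, so the expansion describes the actual iteration.

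\textbf{Canonical cancellation.} Write $\Ecal=\sum_i\lambda_iR_i^\sigma$, where each $R_i^\sigma$ is a pointwise integral of the form $\int q_\theta\,h_f(p_i/q_\theta)\dd\mu$ (orientation $-$), or $\int p_i\,h_f^{*}(q_\theta/p_i)\dd\mu$ with the perspective generator (orientation $+$). In the $+$ case it is simplest to note that $D_f(Q\|P)=D_{f^\diamond}(P\|Q)$ with $f^\diamond(u)=uf(1/u)$. The residual generator $h_f=f-a_ff_{\alpha_f}$ vanishes to order four at $1$ by Theorem~\ref{thm:normal-form}, and the corresponding perspective $u\,h_f(1/u)$ also vanishes to order four, since the perspective preserves the order of the zero at $u=1$. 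Differentiate $\int q_\theta h(p/q_\theta)$ three times in $\theta$. Each term is a product of $q_\theta$-derivatives with $h^{(k)}(u)$, $k\le3$, times polynomials in $u$ and the derivatives of $u=p/q_\theta$. At $\theta=\theta^\star$ with $p=q_{\theta^\star}$ we have $u\equiv1$, so every $h^{(k)}(1)$ with $k\le3$ vanishes and $\nabla^3R_i^\sigma(\theta^\star)=0$. Equivalently, this is the case $u_\sigma\equiv1$ of \eqref{eq:residual-third-bound}. Differentiability through order four is used only to justify the $C^4$ expansion above.

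\textbf{Conclusion and main obstacle.} With $\nabla^3\Ecal(\theta^\star)=0$, \eqref{eq:leading-local-error} gives $e_+=O(\norm e^3)$, which is the cubic rate. The main obstacle is the bookkeeping in the error expansion. One must confirm that the model's own third-order term $\tfrac12\nabla^3\Acal[s,s]$, evaluated at $s\approx-e$, combines with the $\nabla^3F$ terms from $\nabla F(\theta)$ and $\nabla^2F(\theta)s$ to leave exactly $\tfrac12\nabla^3\Ecal[e,e]$. One must also show that the stronger $O(\norm{\nabla F}^3)$ forcing condition keeps the inexactness at third order, using $\norm{\nabla F(\theta)}\asymp\norm e$.
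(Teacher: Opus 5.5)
Your proposal is correct and follows the paper's proof. The local branch and cubic forcing give $\nabla m_\theta^{(2)}(s)=O(\norm e^3)$ and $s=O(\norm e)$, hence $s=-e+O(\norm e^2)$. A second-order Taylor expansion of the near-stationarity condition then isolates $\tfrac12H_\star^{-1}\nabla^3\Ecal(\theta^\star)[e,e]$. At common matching, every term of the third derivative carries a factor $h_f^{(j)}(1)$ with $j\le3$.

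The paper's bookkeeping is a little shorter: the identity $\nabla F(\theta+s)-\nabla m_\theta^{(2)}(s)=\nabla\Ecal(\theta+s)-\nabla\Ecal(\theta)-\nabla^2\Ecal(\theta)s$ yields $\nabla^3\Ecal$ directly, instead of through your combination of $\nabla^3F$ and $\nabla^3\Acal$ terms. One small slip: in orientation $+$ the residual is $\int p_i\,h_f(q_\theta/p_i)\dd\mu$ with $h_f$ itself. The perspective $u\,h_f(1/u)$ enters only after your rewriting $D_f(Q\|P)=D_{f^\diamond}(P\|Q)$. This is harmless, since both generators vanish to fourth order at $1$.
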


Here $\nabla^3\Ecal[e,e]$ is the vector whose inner product with $v$
is $\nabla^3\Ecal[v,e,e]$.
For a quadratic Taylor model the corresponding tensor is $\nabla^3F$.
The nonlinear predictor subtracts $\nabla^3\Acal$ from this coefficient.
Common matching is sufficient for cancellation. Heterogeneous targets
need not give a cubic rate. The stronger forcing condition uses only
model gradients and Hessians.

\label{app:leading-error}
\begin{proof}
Write $x=\theta$, $y=x+s$, $e=x-\theta^\star$, and
$r=\nabla m_x^{(2)}(s)$. On a fixed sufficiently small neighborhood,
$C^4$ regularity bounds the fourth derivatives. Taylor expansion of
the residual gradient gives
\begin{equation}
 \nabla F(y)=r+\tfrac12\nabla^3\Ecal(x)[s,s]+O(\norm s^3).
 \label{eq:leading-gradient-identity}
\end{equation}
The local stationary model branch from Lemma~\ref{lem:local-branch}
is $O(\norm e)$, and local strong convexity bounds the distance of
an inexact step from that branch by $C\norm r$. Since
$\norm{\nabla F(x)}=O(\norm e)$, cubic forcing gives
$r=O(\norm e^3)$ and $s=O(\norm e)$.
Equation~\eqref{eq:leading-gradient-identity}, together with local
invertibility of $\nabla F$, then yields $y-\theta^\star=O(\norm e^2)$.
Consequently $s=-e+O(\norm e^2)$ and
\[
 \nabla^3\Ecal(x)[s,s]
 =\nabla^3\Ecal(\theta^\star)[e,e]+O(\norm e^3).
\]
Expanding $\nabla F(y)=H_\star(y-\theta^\star)
+O(\norm{y-\theta^\star}^2)$ in
\eqref{eq:leading-gradient-identity} proves
\eqref{eq:leading-local-error}. These same local steps satisfy the
$p=2$ forcing condition and are eventually accepted as in the proof
of Theorem~\ref{thm:local}.

For canonical $\alpha_f$, each component of $\Ecal$ is an integral
with residual generator $h_f=f-a_ff_{\alpha_f}$.
At $P_i=Q_{\theta^\star}$, its density ratio equals one.
Every term in the first three parameter derivatives of either
$q_\theta h_f(p_i/q_\theta)$ or $p_i h_f(q_\theta/p_i)$ contains
$h_f^{(j)}(1)$ for some $j\leq3$, which vanishes by
Theorem~\ref{thm:normal-form}. Differentiating under the integral and
summing over positive-weight targets gives
$\nabla^3\Ecal(\theta^\star)=0$ and hence
$\norm{e_+}=O(\norm e^3)$. For a finite positive categorical model,
the interchange is automatic for a $C^4$ parameterization near an
interior point. No third derivative is needed by the order-two
optimizer: it appears only in the analysis and offline audit.
\end{proof}

\begin{table}[!htbp]
\caption{Distinct notions of order used in the paper.}
\label{tab:orders}
\centering
\small
\setlength{\tabcolsep}{5pt}
\renewcommand{\arraystretch}{1.04}
\setlength{\abovecaptionskip}{0pt}
\setlength{\belowcaptionskip}{4pt}
\begin{tabularx}{\linewidth}{@{}l l X@{}}
\toprule
Notion & Order & Meaning \\
\midrule
Likelihood-ratio normal form & $4$ & Error as $p/q\to1$ \\
Calibrated model $p$ & $(p+1,p)$ & Value/gradient error in the step \\
Global complexity & $O(\eps^{-2})$ & Certified iterations to first-order stationarity \\
Local iteration & At least Q-$p$ & Error recursion for $p=2,3$ \\
\bottomrule
\end{tabularx}
\end{table}
\section{Preference continuation}
\label{app:continuation}

This extension uses standard implicit differentiation for smooth
scalarizations \citep{blondel2022implicit}. It does not depend on
$\alpha$-aggregation. CAL supplies the common certified corrector.

Fix an orientation $\sigma$.
Assume that each $F_i^\sigma$ is $C^3$ near $\theta^\star$,
that $\lambda^\star\in\ri(\Delta_m)$, and that
\[
 \nabla F_{\lambda^\star}^\sigma(\theta^\star)=0,
 \qquad
 \nabla^2F_{\lambda^\star}^\sigma(\theta^\star)
 \text{ is nonsingular}.
\]
Parameterize the simplex by
$\lambda_m=1-\sum_{j<m}\lambda_j$.
The implicit function theorem yields a locally unique
$C^2$ stationary branch $\theta^\star(\lambda)$.
Writing
$H_\lambda=
\nabla^2F_\lambda^\sigma(\theta^\star(\lambda))$,
its derivatives satisfy
\begin{equation}
 \frac{\partial\theta^\star}{\partial\lambda_j}
 =-H_\lambda^{-1}\bigl(\nabla F_j^\sigma(\theta^\star)-\nabla F_m^\sigma(\theta^\star)\bigr),\quad j<m.
 \label{eq:preference-path}
\end{equation}
The corresponding first-order predictor has parameter error
$O(\norm{\delta\lambda}^2)$ for nearby feasible preferences
and initializes the corrector used in E6.
The expansion assumes exact branch points and sensitivities. Numerical errors and singular-Hessian crossings require separate analysis.

\paragraph{Predictor--corrector use.}
For a feasible increment with $\mathbf1^\top\delta\lambda=0$, the
coordinate-free form is
\[
 \theta_{\rm pred}=\theta^\star(\lambda)
 -H_\lambda^{-1}\sum_i\delta\lambda_i\nabla F_i^\sigma(\theta^\star(\lambda)).
\]
At the new preference, rebuild the aggregate and apply the calibrated
corrector using the new true scalarization. The predictor accelerates
initialization along a regular branch. It does not certify branch
coverage or remove the need for a true-gradient stopping test.

\section{Extension to Bregman objectives}
\label{app:bregman}

Let $\phi$ be differentiable and strictly convex on an
open convex domain $\Omega$, with
$x_1,\ldots,x_m,y\in\Omega$ and $\lambda\in\Delta_m$.
For the dual-centroid identity, assume that
$\sum_i\lambda_i\nabla\phi(x_i)\in\nabla\phi(\Omega)$,
so that $\widetilde x_\lambda$ exists and is unique.

For $B_\phi(x,y)=\phi(x)-\phi(y)-\inner{\nabla\phi(y)}{x-y}$,
\begin{align}
 \sum_i\lambda_iB_\phi(x_i,y)
 &=C^-_\lambda+B_\phi(\bar x_\lambda,y),
 &\bar x_\lambda&=\sum_i\lambda_ix_i,\label{eq:breg-primal}\\
 \sum_i\lambda_iB_\phi(y,x_i)
 &=C^+_\lambda+B_\phi(y,\widetilde x_\lambda),
 &\nabla\phi(\widetilde x_\lambda)&=\sum_i\lambda_i\nabla\phi(x_i).
 \label{eq:breg-dual}
\end{align}
Expanding the definition proves~\eqref{eq:breg-primal}. Collecting the linear terms in $y$ proves~\eqref{eq:breg-dual}.  For a common-generator objective, choosing the exact aggregate, including its constant, as the predictor gives $\Acal=F$, $\Ecal=0$, and vanishing calibration tensors. Theorems~\ref{thm:global} and~\ref{thm:local} then transfer whenever their Euclidean assumptions hold.

For the variable-first heterogeneous objective $\sum_i\lambda_iB_{\phi_i}(v,a_i)$, define
\begin{equation}
 \Phi_\lambda=\sum_i\lambda_i\phi_i,
 \qquad b_\lambda=\sum_i\lambda_i\nabla\phi_i(a_i).
\end{equation}
If $\Phi_\lambda$ is Legendre and
$a_\lambda=(\nabla\Phi_\lambda)^{-1}(b_\lambda)$ exists, then
\begin{equation}
 \sum_i\lambda_iB_{\phi_i}(v,a_i)
 =C_\lambda+B_{\Phi_\lambda}(v,a_\lambda).
\end{equation}
Indeed, both sides equal $\Phi_\lambda(v)-\inner{b_\lambda}{v}$ up to a constant. A derivative-matching Bregman template can therefore replace the $\alpha_f$ module under the same regularity, model-decrease, and acceptance assumptions. For heterogeneous generators, this algebraic identity alone does not guarantee that evaluating $\Phi_\lambda$ is cheaper than evaluating its component sum.

\section{Implementation and computational scope}
\label{app:implementation}

\subsection{The computational regime}
Group queries by returned derivative order $r$ (values, gradients, or higher jets):
\begin{equation}
 T_{\rm CAL}=T_{\rm setup}
 +\sum_r n_r^F\bar c_r^F
 +\sum_r n_r^{\Acal}\bar c_r^{\Acal}
 +T_{\rm other}.
 \label{eq:cost-accounting}
\end{equation}
Here $n_r^F,n_r^{\Acal}$ count true-objective and predictor calls,
$\bar c_r^F,\bar c_r^{\Acal}$ are their mean costs, and $T_{\rm other}$
includes subproblem algebra and control overhead. Setup is charged once
per preference. A net gain requires true-oracle savings to exceed setup,
predictor-query, and model-management costs. E9 measures this crossover
as the target count grows.

\subsection{Nonlinear inner solves used in E8/E9}
The calibrated model is generally nonquadratic. The implementation
starts at $v=0$, shifts its current Hessian by
$\max\{0,10^{-7}-\lambda_{\min}(\nabla^2m(v))\}I$, and computes a
Newton direction, falling back to the negative model gradient if
needed. Trial iterates are projected onto the trust-region ball.
Armijo backtracking uses coefficient $10^{-4}$ and at most 22 trials.
The best model decrease is retained together with a steepest-descent
comparator. E8/E9 cap the inner loop at 12 iterations and use relative
gradient tolerance $0.01$ with absolute floor $10^{-9}$.
These settings target practical stationarity. E5 separately enforces
the stronger local forcing condition~\eqref{eq:inner}.

\subsection{A Cauchy comparator without a supplied uniform bound}
Write $g=\nabla m(0)\neq0$. The E8/E9 comparator initializes
\[
 \alpha_0=\min\left\{\frac{\Delta}{\norm g},
              \frac{1}{\max\{1,\norm{\nabla^2m(0)}_\op\}}\right\}
\]
and halves $\alpha$ while seeking
\begin{equation}
 m(-\alpha g)-m(0)\leq-\tfrac14\alpha\norm g^2.
 \label{eq:backtracking-comparator}
\end{equation}
The center Hessian supplies an initial scale, not a certified bound
over the ball. To connect this test to the theory, suppose
$\norm{\nabla^2m(v)}_\op\leq\kappa_H$ on the ball, taking
$\kappa_H\geq1$ without loss of generality. Taylor's inequality ensures
\eqref{eq:backtracking-comparator} whenever
$\alpha\leq3/(2\kappa_H)$. In exact arithmetic, uncapped halving
therefore terminates with
\[
 \alpha\geq\tfrac12\min\{\Delta/\norm g,1/\kappa_H\},\qquad
 m(0)-m(-\alpha g)\geq\tfrac18\norm g
                   \min\{\Delta,\norm g/\kappa_H\}.
\]
Any returned step with at least this model decrease satisfies
\eqref{eq:cauchy} with $\kappa_c=1/8$. The algorithm need not know the
numerical value of $\kappa_H$. The stored E8/E9 implementations cap
this comparator search at 40 trials and retain a decreasing candidate.
Finite caps and arithmetic floors are implementation safeguards, not
independent verification of the uniform-bound assumption. E4 uses its
stated conservative constant, while E5 uses an analytic ballwise bound.

\subsection{Parameter dimension and matrix-free extensions}
\label{app:scalability}
Target count $m$, parameter dimension $d$, and sample-space dimension $D$ impose distinct costs. Dense residual Hessians need $O(d^2)$ storage and dense Newton factorization $O(d^3)$ work. Order $p=3$ also requires third derivatives. Tensor storage is not inherent: for $p=2$,
\begin{equation}
 \nabla_v^2m_\theta^{(2)}(v)u
 =\nabla^2\Acal(\theta+v)u
  +[\nabla^2F(\theta)-\nabla^2\Acal(\theta)]u.
 \label{eq:matrix-free-model}
\end{equation}
Exact Hessian--vector products permit matrix-free inner solves under the same decrease and forcing conditions, but repeated true residual products can revisit all targets and negate caching savings. CAL-p1 permits a safeguarded first-order inner solver using a cached residual gradient. Such large-scale implementations are untested. Approximate derivatives need additional error controls.

\subsection{Integration dimension and sampled oracles}
Aggregation holds for integrable densities independently of discretization. For $\alpha\ne\pm1$, the forward predictor integral can use $\sum_i\lambda_ip_i^{r_\alpha}$ without normalizing the pseudo-target, but efficiency requires a reusable representation. Tensor grids have $k^D$ nodes for $k$ nodes per coordinate. Fresh Monte Carlo nodes may require reevaluating all targets. E8/E9 test two-dimensional grids with refinement audits, not high-dimensional integration scalability or the theorems' exact continuous oracles. Target minibatching also requires error controls for noisy derivatives and acceptance tests. Stochastic rates are not established here.

\section{Numerical experiments}
\label{sec:experiments}

The experiments test model fidelity (E1--E5, E7), local iteration error
(E10), total cost (E9), learned targets (E8), and repeated preferences
(E6). Table~\ref{tab:mechanism-main} summarizes the controlled evidence.
Appendix~\ref{app:numerics} gives the protocols. Forward/reverse denote
$\sigma=-/+$.

\subsection{Experimental overview and generator conventions}
\label{sec:generators}
The labels JS, KLChi, and KL4 denote
\begin{align*}
 f_{\mathrm{JS}}(u)
 &=u\log u-(u+1)\log\!\left(\frac{u+1}{2}\right),\\
 f_{\mathrm{KLChi}}(u)
 &=u\log u-u+1+\tfrac12(u-1)^2,\\
 f_{\mathrm{KL4}}(u)
 &=u\log u-u+1+(u-1)^4.
\end{align*}
Their $(a_f,\alpha_f)$ pairs are
$(1/2,0)$, $(2,-2)$, and $(1,-1)$, respectively.
JS uses the unhalved normalization:
$D_{f_{\mathrm{JS}}}(P\|Q)
=\KL(P\|M)+\KL(Q\|M)$, where $M=(P+Q)/2$.

\subsection{Why compression needs calibration}
\label{sec:mechanisms}

\paragraph{Geometry, exact aggregation, and step accuracy (E1--E3).}
E1 recovers residual orders $(4,3,2)$ near matched distributions,
supporting the canonical local geometry (Figure~\ref{fig:e1-scaling}).
E2 verifies 15,680 aggregation identities, with maximum scaled error
$3.77\times10^{-15}$. Its repeated-query benchmark measures the cost
of constructing and reusing an exact aggregate: at $m=256$,
end-to-end speedups are $2.46$--$3.14\times$, with break-even counts
of 5--100 queries (Figure~\ref{fig:e2-cost}). These are aggregation
microbenchmarks, rather than full optimization timings.

E3 recovers value/directional-derivative orders $(p+1,p)$ even with
median maximum absolute log-likelihood ratio $3.94$
(Figure~\ref{fig:e3-model-scaling}). Thus the predictor is exactly
compressible, and calibration controls parameter-step errors beyond
nearby targets.

\begin{figure}[!htbp]
  \centering
  \includegraphics[width=0.902\linewidth]{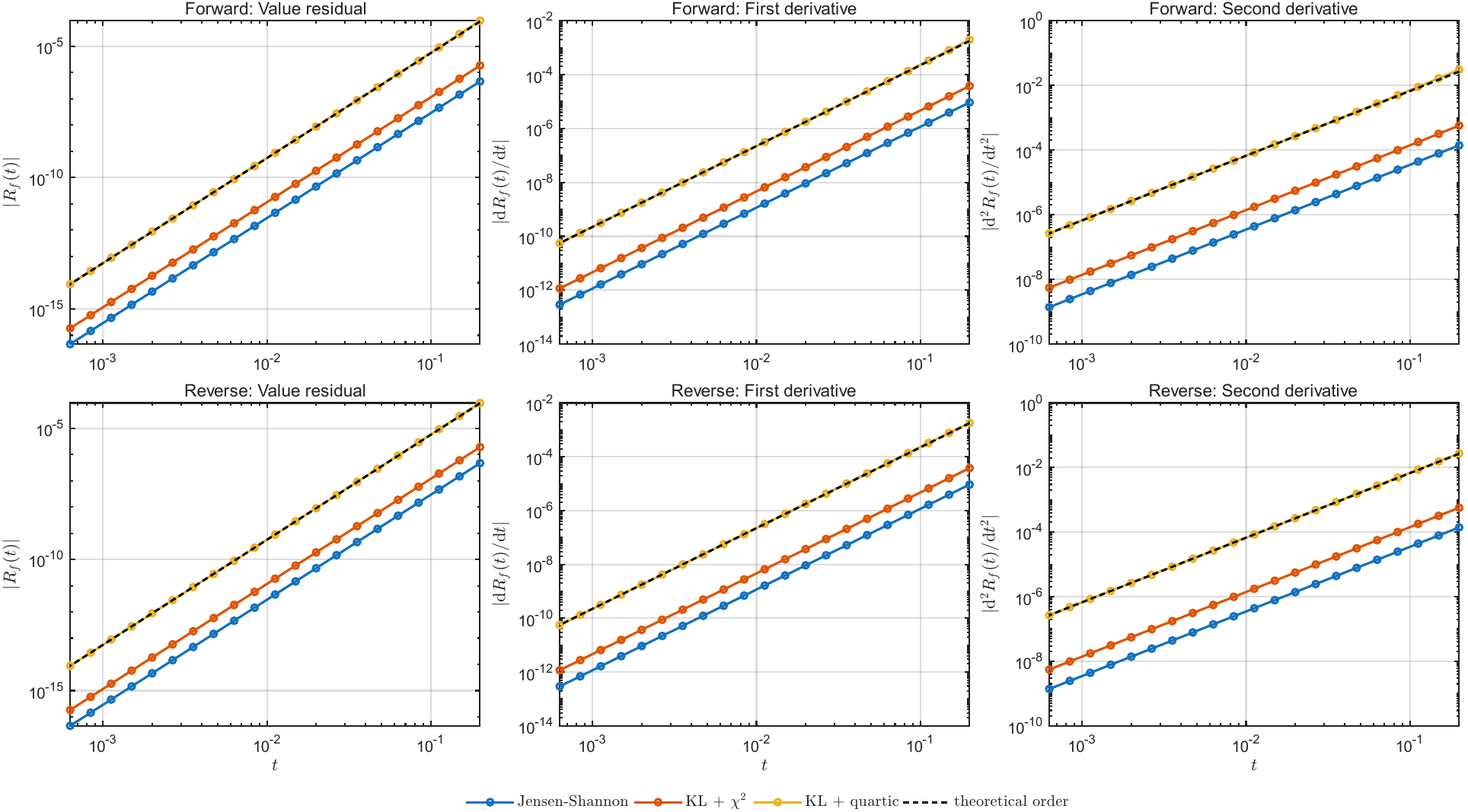}
  \caption{Local normal-form scaling in the forward and reverse orientations.  Curves show medians over 20 seeds. Dashed references have slopes $4$, $3$, and $2$.  Both axes are logarithmic.}
  \label{fig:e1-scaling}
\end{figure}

\begin{figure}[!htbp]
  \centering
  \includegraphics[width=0.902\linewidth]{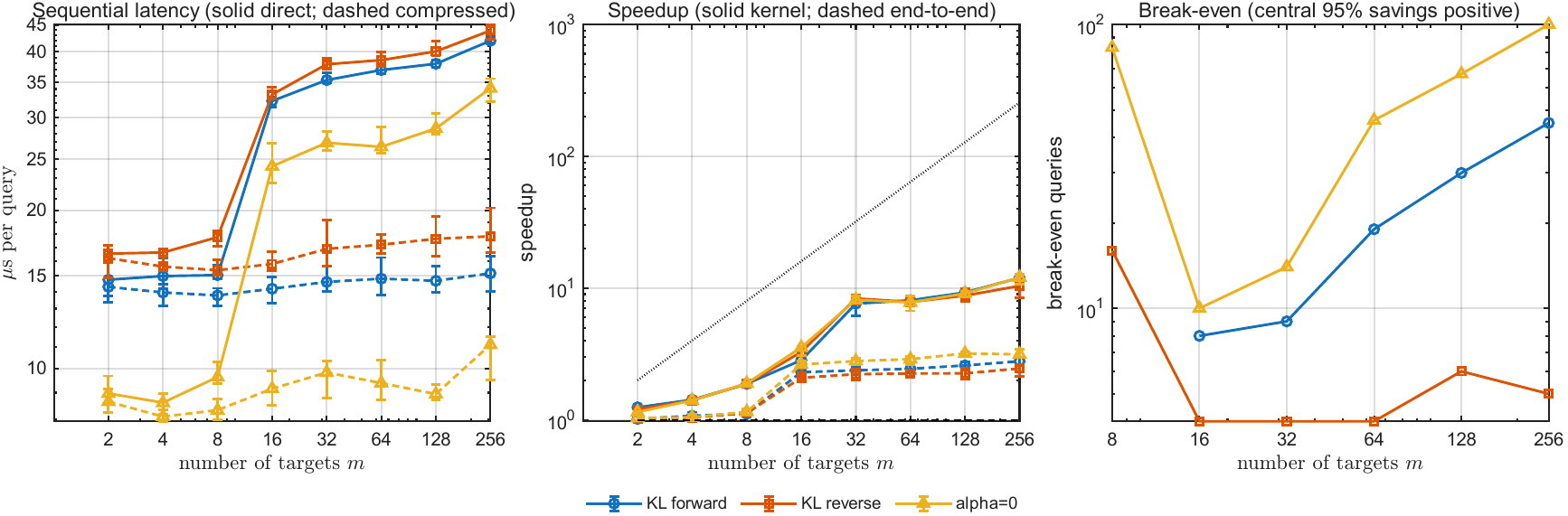}
  \caption{E2 amortized cost, $K=2048$, 30 paired repeats. Left: direct (solid) and aggregated (dashed) time per query, with IQRs. Middle: target-kernel (solid) and end-to-end (dashed) speedups, with $m$-fold and unit references. Right: break-even query counts. Gaps indicate a nonpositive 2.5th percentile of paired savings.}
  \label{fig:e2-cost}
\end{figure}

\begin{figure}[!htbp]
\centering
\includegraphics[width=0.902\linewidth]{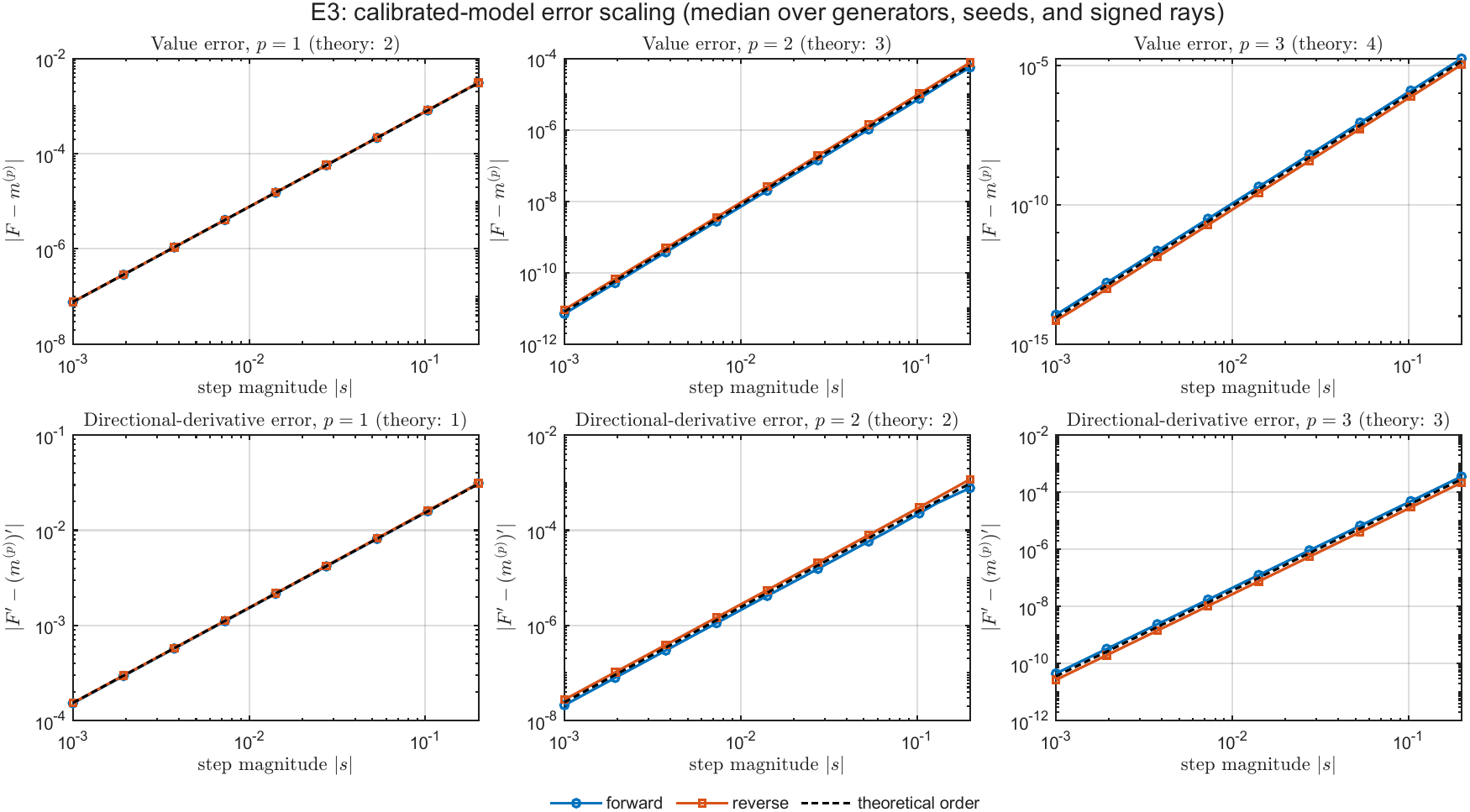}
\caption{Step-local calibrated-model error scaling.  Columns correspond to $p=1,2,3$. Rows show value and directional-derivative errors.  Curves are medians over three generators, 20 seeds, and two signed rays. Dashed references have slopes $p+1$ and $p$.}
\label{fig:e3-model-scaling}
\end{figure}

\paragraph{Why residual derivatives matter.}
In E4, each calibrated method meets the true-gradient tolerance in
1,800/1,800 runs, versus 180/1,800 for the uncalibrated predictor. In E7, the
corresponding counts are 480/480 and 80/480. Direct methods also succeed
throughout. Figure~\ref{fig:e4-robustness-cost} reports the E4 success and cost comparisons. These comparisons show why a compressible approximation
needs calibration to serve the specified divergence.

\begin{figure}[!htbp]
  \centering
  \includegraphics[width=0.902\linewidth]{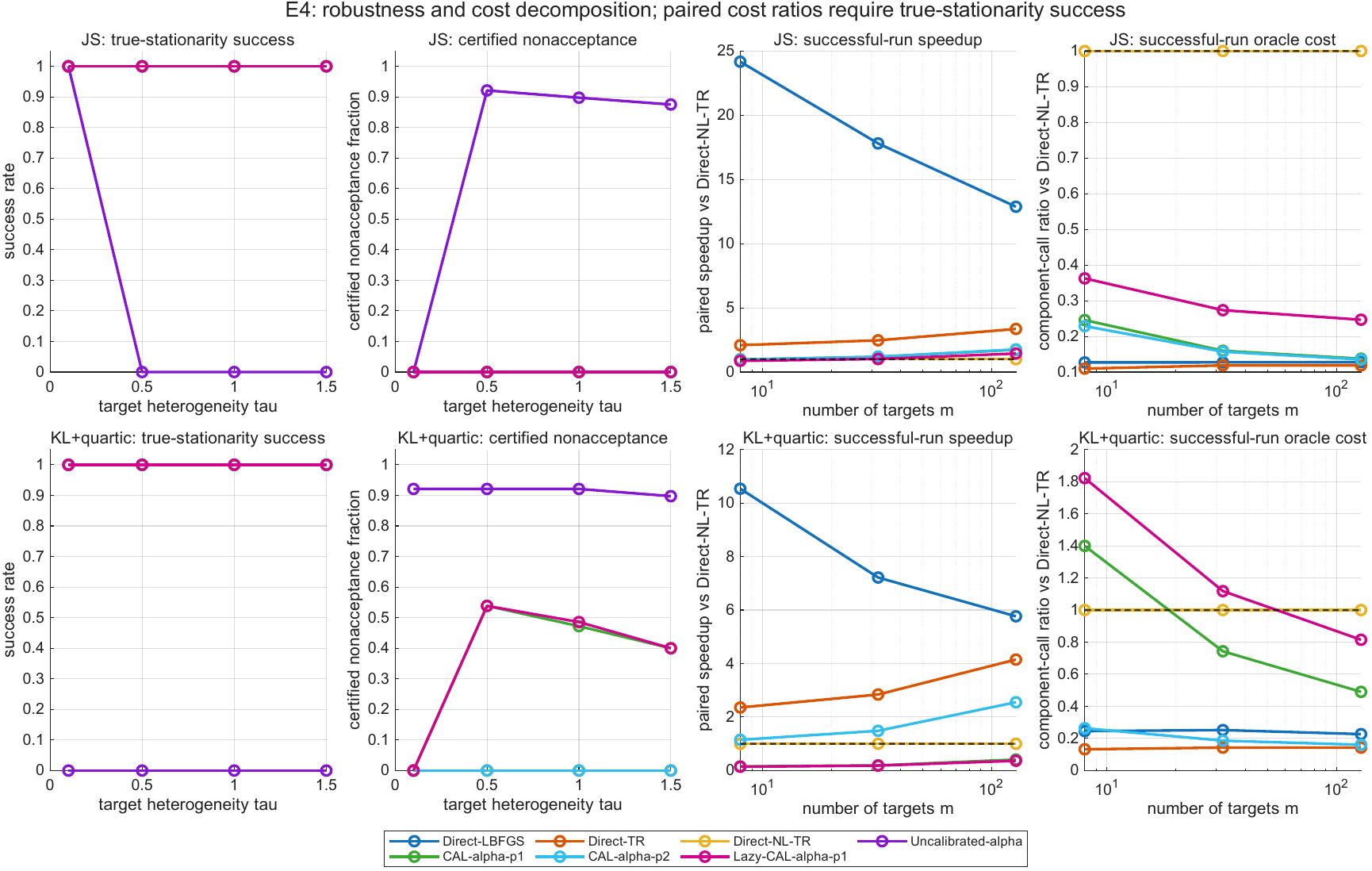}
  \caption{E4 robustness and cost, 30 paired seeds. Rows: JS and KL+quartic. Left: success rates and median certified nonacceptance fractions, including failed subproblems and rejected trials (not applicable to L-BFGS). Right: median paired runtime speedups and component-call ratios against Direct-NL-TR in the target-count sweep. Both runs must succeed.}
  \label{fig:e4-robustness-cost}
\end{figure}

\paragraph{Local order under controlled inner accuracy (E5).}
E5 uses a positive affine probability chart, ten seeds, two generators,
both orientations, and 120-digit arithmetic. Independent one-step
maps recover orders two and three for $p=2$ and $p=3$, respectively
(Figure~\ref{fig:e5-orders}). All 160 primary trajectories reach the
true-gradient tolerance. Condition ablations show why inner accuracy,
eventually inactive truncation, and nondegeneracy matter. The full
protocol and actual-iterate orders are in Appendix~\ref{app:e5-details}.

\begin{figure}[!htbp]
  \centering
  \includegraphics[width=0.798\linewidth]{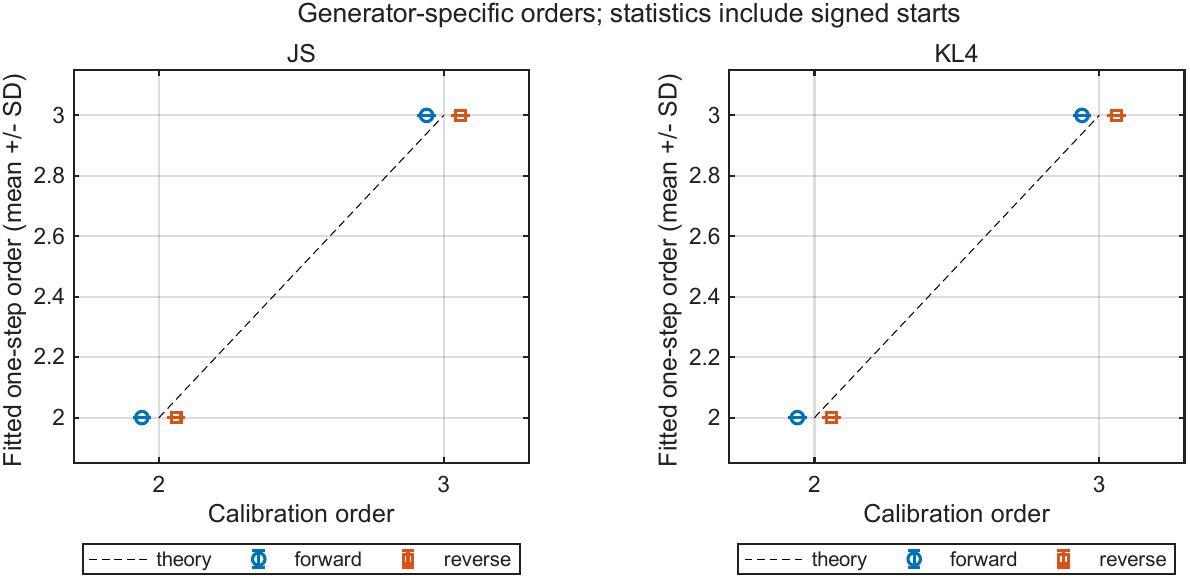}
  \caption{E5 one-step orders: means and one standard deviation over ten seeds and two paired signs (bars smaller than markers, not confidence intervals). Dashed lines mark order $p$. Actual-iterate orders appear in Table~\ref{tab:e5-iterate-orders}.}
  \label{fig:e5-orders}
\end{figure}

\paragraph{Template choice at fixed calibration order (E7).}
E7 holds the matched quadratic jet fixed and varies the nonlinear
template. Canonical geometry improves the displayed finite-step
model errors in several regimes (Figure~\ref{fig:e7-gradient}), but
does not guarantee the smallest error for every target configuration.
All calibrated models and Taylor-p2 attain true-gradient stationarity
in 480/480 cases. Canonical CAL's group-median paired speedups are
$0.470$--$0.816$ relative to Taylor-p2, so these accuracy improvements
do not imply lower runtime in this experiment.

\begin{figure}[!htbp]
  \centering
  \includegraphics[width=0.874\linewidth]{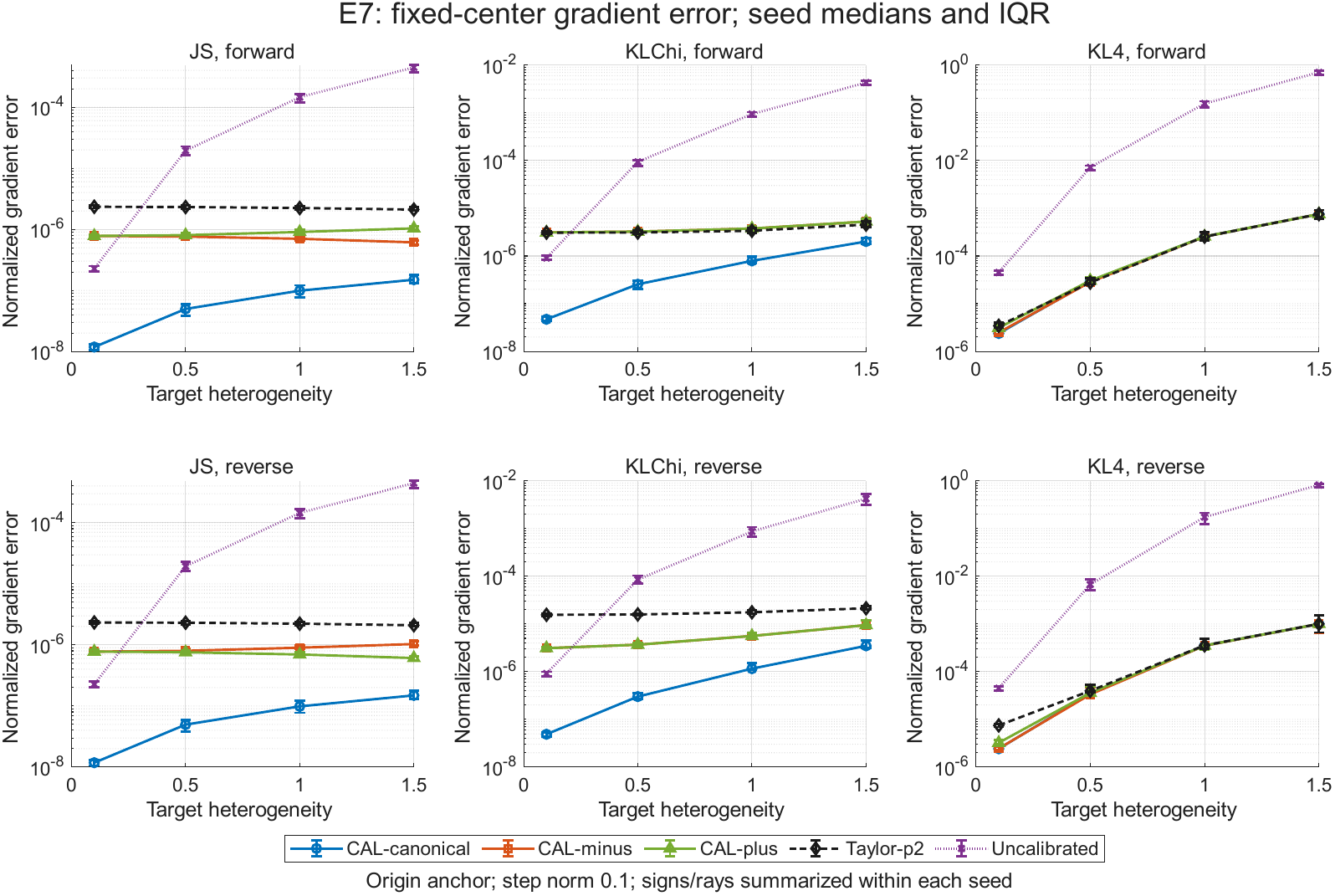}
  \caption{E7 normalized gradient errors~\eqref{eq:e7-errors} at $\theta=0$, $\norm s=0.1$. Directions and signs are summarized within seeds, then medians/IQRs across 20 seeds. CAL-minus/plus use $\alpha_f\mp1$. All calibrated models and Taylor-p2 match through order two. JS orientations coincide.}
  \label{fig:e7-gradient}
\end{figure}

\subsection{Canonical geometry and local iteration error (E10)}
\label{sec:e10}
E10 compares canonical CAL-p2, shifts $\alpha_f\pm1$, and Taylor-p2,
which share the true value, gradient, and Hessian at each anchor.
Five seeds, forward JS and KL+quartic, and four heterogeneity levels
$\eta\in\{0,.01,.05,.2\}$ yield 1,920 accepted, audited local maps.
All methods use the same cubic inner forcing and 120-digit arithmetic.
Third derivatives are evaluated only offline. At common matching
($\eta=0$), canonical CAL-p2 has median fitted order $3.000$ for both
generators, versus $2.000$ for all controls. Subtracting the predicted
quadratic error in~\eqref{eq:leading-local-error} leaves a cubic
remainder for every method, directly testing the coefficient formula.

For $\eta>0$, fitted orders return to approximately two. Canonical CAL-p2
has smaller one-step errors than Taylor-p2 in all 360 paired probes.
These tests measure local accuracy at fixed calibration order, not runtime
or a universal ranking of templates (Appendix~\ref{app:e10-details}).

\begin{figure}[!htbp]
 \centering
 \begin{tabular}{@{}cc@{}}
 \includegraphics[width=.38\linewidth,viewport=0 190 190 381,clip]{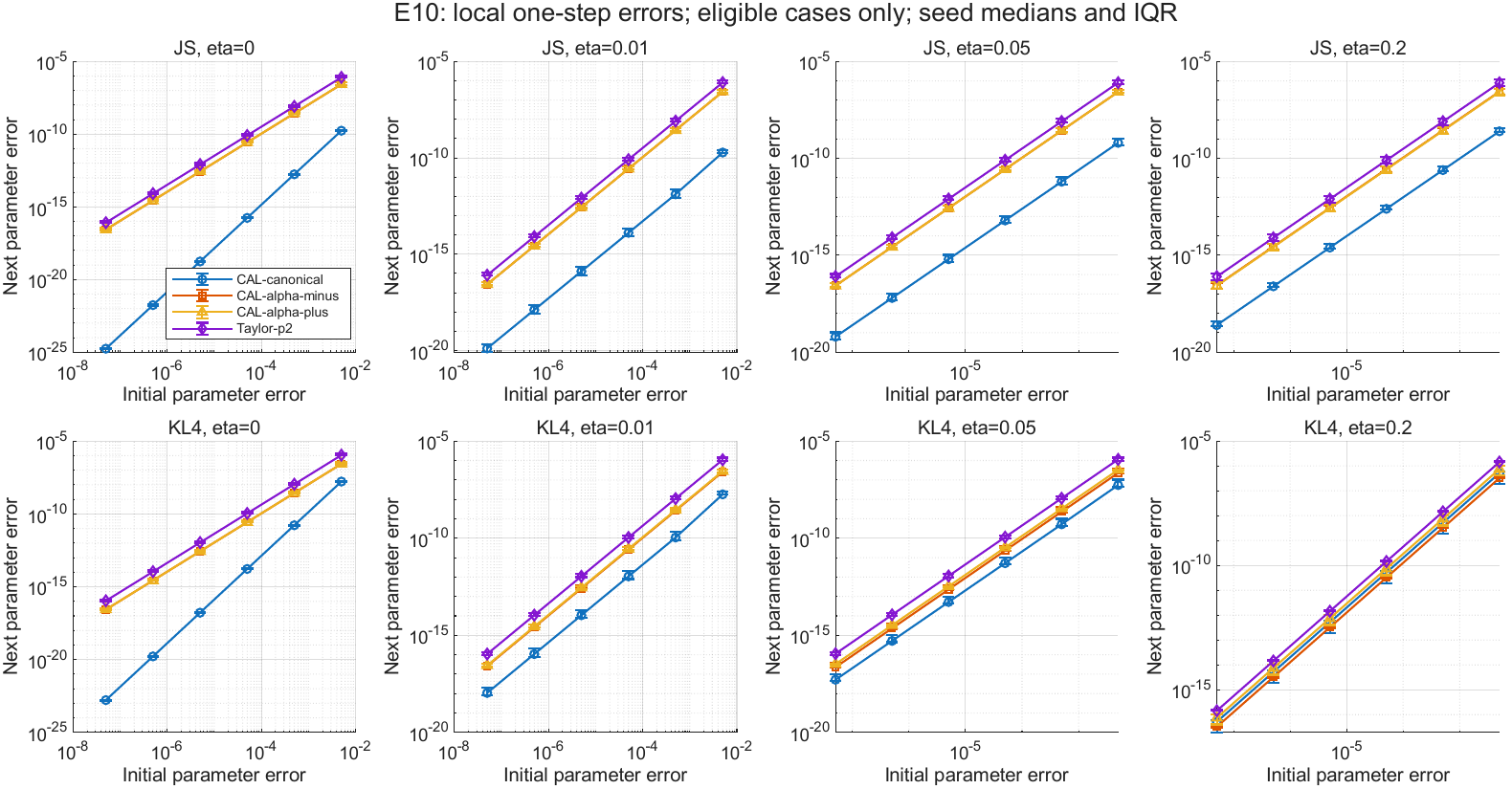}&
 \includegraphics[width=.38\linewidth,viewport=381 190 571 381,clip]{E10_one_step_scaling.pdf}\\[3pt]
 \includegraphics[width=.38\linewidth,viewport=0 0 190 189,clip]{E10_one_step_scaling.pdf}&
 \includegraphics[width=.38\linewidth,viewport=381 0 571 189,clip]{E10_one_step_scaling.pdf}
 \end{tabular}
 \caption{E10 one-step errors, seed medians/IQRs, with identical $p=2$ calibration and cubic inner forcing. Canonical CAL-p2 is cubic at $\eta=0$ and quadratic for positive heterogeneity. Panels show $\eta=0,0.05$ and all six radii. Fits and audits use all four heterogeneities.}
 \label{fig:e10-maps}
\end{figure}

\par\medskip\noindent
\begin{minipage}[t]{0.52\linewidth}
\vspace{0pt}
\textbf{Error coefficients.}
Table~\ref{tab:e10-coefficients} compares the leading directional
errors in~\eqref{eq:leading-local-error}. At $\eta=0.05$, canonical
CAL-p2 reduces the seed-median coefficient by a factor of $2.37\times10^3$
for JS and $31.8$ for KL4. The models have identical center values,
gradients, and Hessians. Their different nonlinear remainders account for
the change in local error. E7 tests finite-step model accuracy, and E9
examines whether aggregation lowers runtime.
\end{minipage}\hfill
\begin{minipage}[t]{0.44\linewidth}
\vspace{0pt}
\captionsetup{font=small,hypcap=false,justification=raggedright,singlelinecheck=false}
\captionof{table}{E10 directional error ratio
$\norm{b_{\rm Taylor}(d_s)}/\norm{b_{\rm CAL}(d_s)}$.
Medians over five seeds. Values above one favor canonical CAL-p2.
The denominator vanishes at $\eta=0$.}
\label{tab:e10-coefficients}
\centering\small
\setlength{\tabcolsep}{7pt}
\begin{tabular}{@{}lrr@{}}
\toprule
$\eta$ & JS & KL4\\
\midrule
$0.01$ & $12{,}793.3$ & $177.6$\\
$0.05$ & $2{,}369.0$ & $31.8$\\
$0.2$ & $514.5$ & $2.77$\\
\bottomrule
\end{tabular}
\end{minipage}
\par\medskip

\begin{table}[htbp]
 \centering\small
 \caption{Controlled evidence. Local rates require the stated neighborhood and inner-accuracy conditions. The E10 cubic result is at common matching.}
 \label{tab:mechanism-main}
 \begin{tabularx}{\linewidth}{@{}l X l@{}}
 \toprule
 Claim & Observation & Experiment\\
 \midrule
 Exact aggregation & Max. scaled identity error $3.77\times10^{-15}$ & E2\\
 Step-based accuracy & Value/directional-derivative exponents $(p+1,p)$ & E3\\
 Calibration fidelity & Higher true-gradient success rates than uncalibrated models & E4, E7\\
 Canonical cancellation & CAL-p2 order $3.000$; Taylor-p2 order $2.000$ & E10\\
 Quadratic local iteration & Mean fitted orders $1.993$--$2.014$ & E5, $p=2$\\
 Cubic local iteration & Mean fitted orders $2.994$--$2.996$ & E5, $p=3$\\
 \bottomrule
 \end{tabularx}
\end{table}

\subsection{When compression reduces total cost (E9)}
\label{sec:e9}

\paragraph{Design and cost accounting.}
E9 forms nested banks of $m\in\{3,10,30,100,300\}$ Gaussian-mixture
targets by perturbing three fitted latent targets. Five seeds, five
positive preferences, and forward JS or KL are used. The model has two
Gaussian components and five free parameters. CAL-p1/p2 are compared
with direct L-BFGS and a quadratic true-objective trust-region solver
(Direct-TR). All methods share target caches, vectorized evaluations,
initial points, and stationarity thresholds. The direct KL baselines
use exact aggregation, as does the predictor.

Total time includes scalarization setup, calibration, inner solves, and
true-objective acceptance. Common data preparation and independent
audits are excluded. Three unprofiled repeats are summarized within
each case. Timing pairs must be stationary and have objective values
within $2\times10^{-6}$. Ratios are summarized over preferences within
seeds, then by medians and IQRs across five seeds. All 1,000 primary
runs pass the stationarity audit. Different local solutions account
for incomplete timing-pair coverage.

\begin{figure}[t]
 \centering
 \includegraphics[width=.48\linewidth,viewport=0 37 275 240,clip]{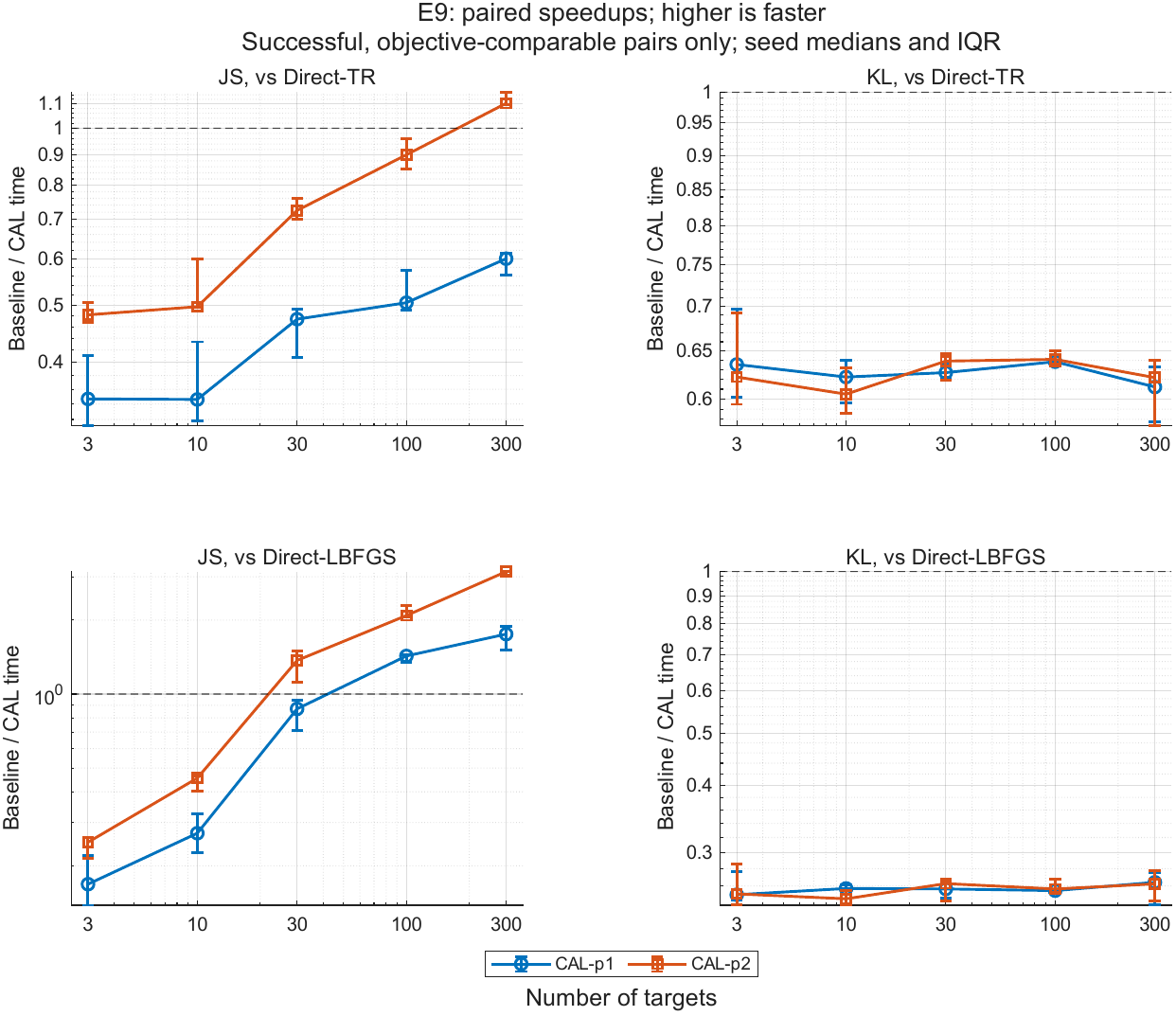}\hfill
 \includegraphics[width=.48\linewidth,viewport=0 280 275 483,clip]{E9_paired_speedup.pdf}
 \par\smallskip
 {\small Number of targets $m$ \quad
 \textcolor[rgb]{0,0.447,0.741}{\(\circ\) CAL-p1}\quad
 \textcolor[rgb]{0.850,0.325,0.098}{\(\square\) CAL-p2}}
 \caption{E9: measured JS acceleration relative to direct L-BFGS (left) and Direct-TR (right). Values above one favor CAL. Points and bars are seed medians and IQRs of objective-comparable successful pairs. These are the JS panels of the complete comparison in Figure~\ref{fig:e9-all-speedups}. Table~\ref{tab:e9-main} reports CAL-p2 pair coverage.}
 \label{fig:e9-main}
\end{figure}

\par\medskip\noindent
\begin{minipage}[t]{0.51\linewidth}
\vspace{0pt}
\textbf{Cost crossover.}
CAL-p2 becomes faster than L-BFGS at $m=30$.
At $m=300$, median paired speedups are $3.15\times$ over L-BFGS and
$1.10\times$ over Direct-TR (Figure~\ref{fig:e9-main}).
CAL-p2 uses median counts of five true value/gradient/Hessian calls
and four additional value calls, compared with six and five for
Direct-TR. The aggregate handles the repeated inner queries.
Pairs reaching different local solutions are excluded from timing
ratios but retained in the endpoint audits and reference-gap plots.
\end{minipage}\hfill
\begin{minipage}[t]{0.45\linewidth}
\vspace{0pt}
\captionsetup{hypcap=false,justification=raggedright,singlelinecheck=false}
\captionof{table}{E9 CAL-p2 speedups on JS.
Parentheses give eligible pairs out of 25. All runs pass the stationarity
audit. Values below one favor the baseline.}
\label{tab:e9-main}
\centering\small
\setlength{\tabcolsep}{5pt}
\begin{tabular}{@{}rcc@{}}
\toprule
$m$ & vs L-BFGS & vs Direct-TR\\
\midrule
3 & $0.25$ (22) & $0.48$ (24)\\
10 & $0.45$ (22) & $0.50$ (21)\\
30 & $1.37$ (25) & $0.72$ (24)\\
100 & $2.09$ (24) & $0.90$ (24)\\
300 & $3.15$ (23) & $1.10$ (24)\\
\bottomrule
\end{tabular}
\end{minipage}
\par\medskip

\paragraph{Why the gain appears.}
After aggregation, JS predictor cost is essentially independent of $m$, while the implemented true JS oracle processes all targets. At $m=300$, measured true-to-predictor cost ratios are $188.8$ for values, $68.1$ for value/gradient pairs, and $8.79$ for value/gradient/Hessian evaluations (Figure~\ref{fig:e9-cost-ratio}). This separation offsets calibration and inner-solver overhead. An implementation ablation keeps the optimization rules fixed and verifies agreement of acceptance decisions and endpoints while requesting only the needed derivative order, confirming the cost of unnecessary template Hessians (Figure~\ref{fig:e9-implementation}).

\begin{figure}[t]
 \centering
 \includegraphics[width=.86\linewidth]{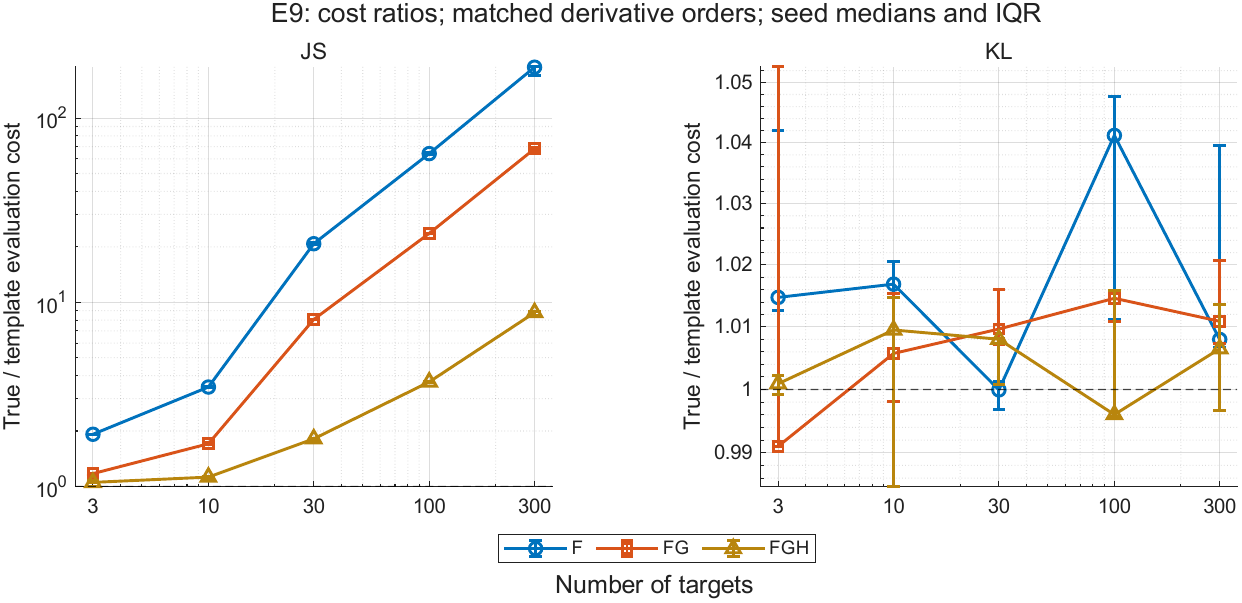}
 \caption{E9: true-objective cost divided by aggregated-predictor cost, at matched derivative orders. F, FG, and FGH include values, values/gradients, and values/gradients/Hessians. The JS cost separation grows with the target count. KL stays near one because both implementations aggregate exactly. Points and bars are seed medians and IQRs across five seeds.}
 \label{fig:e9-cost-ratio}
\end{figure}

\paragraph{Derivative-demand control.}
For the original three-target problem, we compare eager template evaluation (computing a Hessian at every template query) with demand-driven evaluation that computes only the requested derivative order. Optimization rules, tolerances, and initialization are held fixed. The ablation uses five seeds, two representative preferences, and CAL-p1/p2.  Outer acceptance decisions agree, and endpoint parameter/objective differences pass tolerances $10^{-8}$/$10^{-10}$. Figure~\ref{fig:e9-implementation} shows the timing and template-Hessian savings.

\begin{figure}[!htbp]
 \centering
 \includegraphics[width=0.779\linewidth]{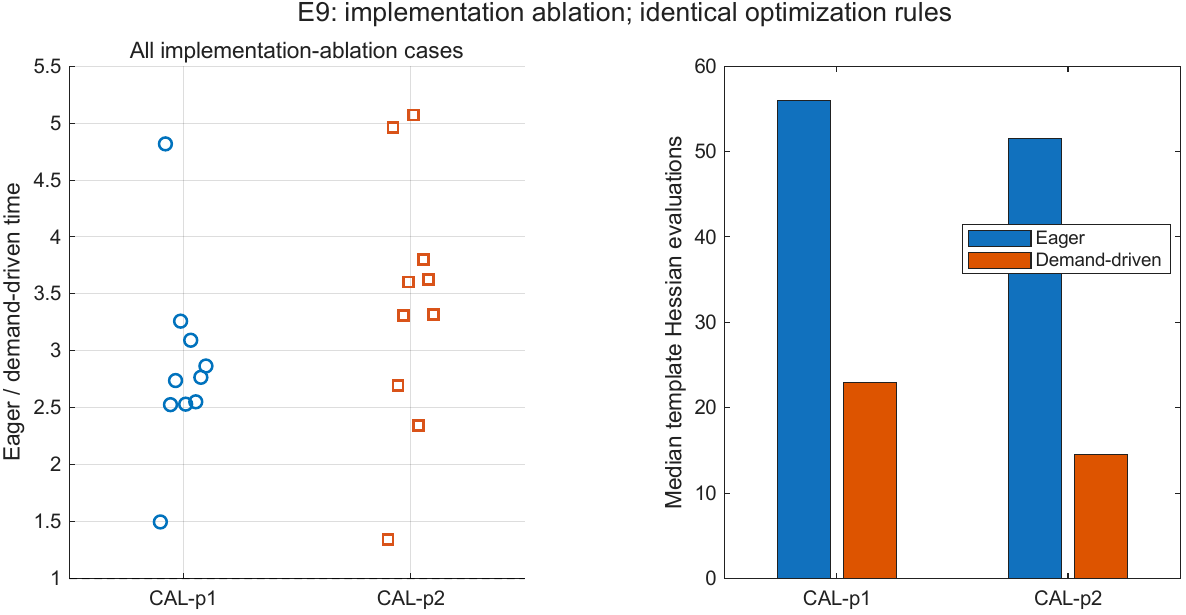}
 \caption{E9 implementation ablation with identical optimization rules. Left: eager/demand-driven time ratios for CAL-p1/p2. Right: median template-Hessian counts. This compares two implementations of the same method, not CAL against a direct optimizer.}
 \label{fig:e9-implementation}
\end{figure}

Direct-TR matches CAL-p2's true derivative order. The larger speedup
over L-BFGS also reflects different use of curvature. CAL-p2 is slower
than Direct-TR through $m=100$. At $m=300$, the JS speedup IQRs are
$[3.06,3.15]$ against L-BFGS and $[1.10,1.15]$ against Direct-TR.
The solve time includes median counts of 56 predictor-value calls and
14 predictor-Hessian calls. Target count alone does not ensure savings:
E4's direct solvers remain faster at $m=128$ (Table~\ref{tab:e4-cost}).
The evidence supports a cost crossover at comparable attained objectives,
without implying better minima or faster exact-KL optimization.

\subsection{Learned targets and decoded trade-offs (E8)}
\label{sec:e8}

E8 uses MNIST digits 0, 1, and 2 with a trained autoencoder whose
two-dimensional encoder and decoder are then frozen. Classwise Gaussian
mixtures fitted to training encodings define $R_0,R_1,R_2$. Three targets
mix them with weights $(.7,.2,.1)$, $(.1,.7,.2)$, and $(.2,.1,.7)$.
A five-parameter, two-component latent mixture $Q_\theta$ matches these
targets under forward JS or KL, and its samples pass through the fixed
decoder. We distinguish optimization fidelity from the observable
consequences of changing preferences.

\begin{figure}[!htbp]
 \centering\small
 \setlength{\tabcolsep}{5pt}
 \begin{tabular}{@{}cccc@{}}
 $(.33,.33,.33)$ & $(.90,.05,.05)$ & $(.05,.90,.05)$ & $(.05,.05,.90)$\\[4pt]
 \includegraphics[width=.21\linewidth,viewport=145 344 188 386,clip]{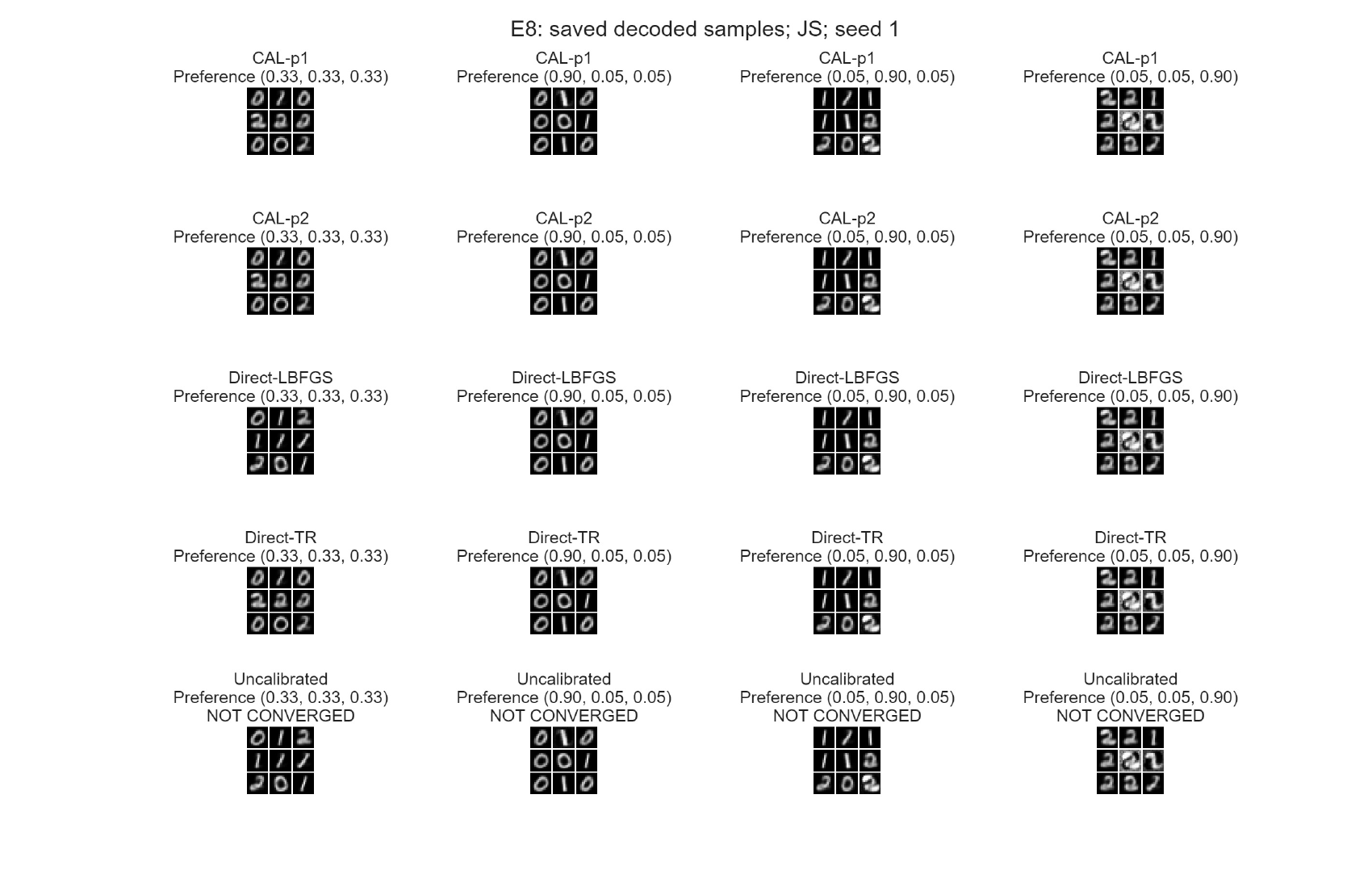}&
 \includegraphics[width=.21\linewidth,viewport=313 344 357 386,clip]{\EeightSamplesPDF}&
 \includegraphics[width=.21\linewidth,viewport=482 344 526 386,clip]{\EeightSamplesPDF}&
 \includegraphics[width=.21\linewidth,viewport=650 344 694 386,clip]{\EeightSamplesPDF}
 \end{tabular}
 \caption{E8: saved decoded mean images for CAL-p2, JS, seed 1, with a fixed decoder. Each preference $\lambda$ has nine latent draws, without reranking. All-method samples and complete held-out image diagnostics are in Appendix~\ref{app:e8-details}.}
 \label{fig:e8-main}
\end{figure}

\paragraph{Optimization fidelity.}
Success requires the refined-grid true-gradient norm to be below
$5\times10^{-6}$ and passing objective, gradient, and mass checks.
Across five seeds and ten preferences, all 200 calibrated runs and all
200 direct-baseline runs pass. The uncalibrated predictor passes 0/50
JS cases, despite plausible decoded images, and 50/50 exact-KL controls.
Maximum objective and gradient discrepancies under grid refinement are
below $1.86\times10^{-8}$ and $1.14\times10^{-7}$, respectively
(Figure~\ref{fig:e8-audits}).

\begin{figure}[!htbp]
 \centering
 \includegraphics[width=0.9\linewidth]{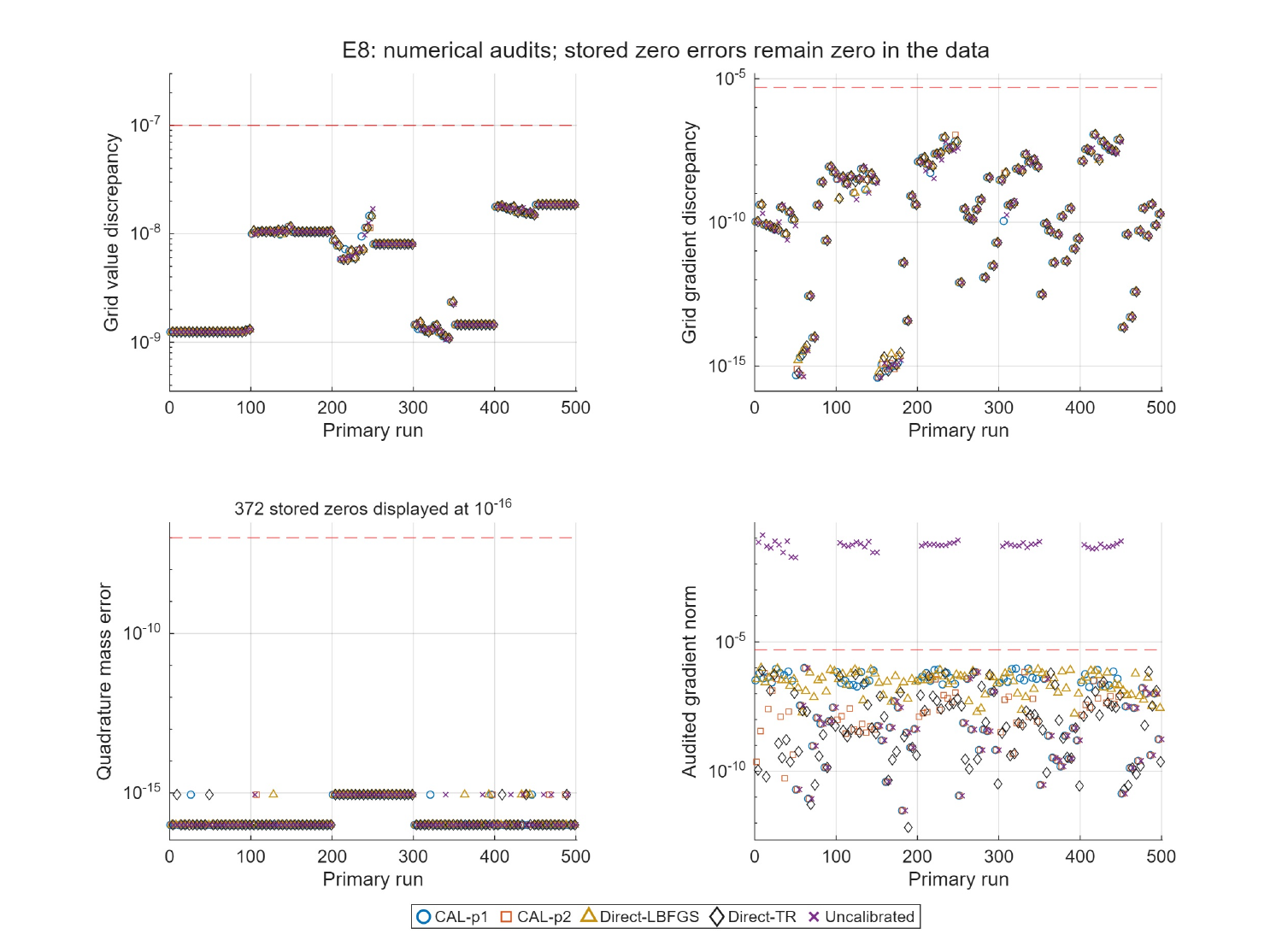}
 \caption{E8 quadrature and endpoint audits. Dashed lines mark the configured thresholds. The mass panel reports normalization error of the discrete masses. Stored zeros are displayed at $10^{-16}$ without changing the data. Large true gradients of uncalibrated JS endpoints explain their failed audits.}
 \label{fig:e8-audits}
\end{figure}

\paragraph{Preference response.}
Changing $\lambda$ changes which learned target is prioritized
(Figure~\ref{fig:e8-tradeoffs}). These weights specify divergence
objectives, not generated class proportions.

\begin{figure}[!htbp]
 \centering
 \includegraphics[width=0.864\linewidth]{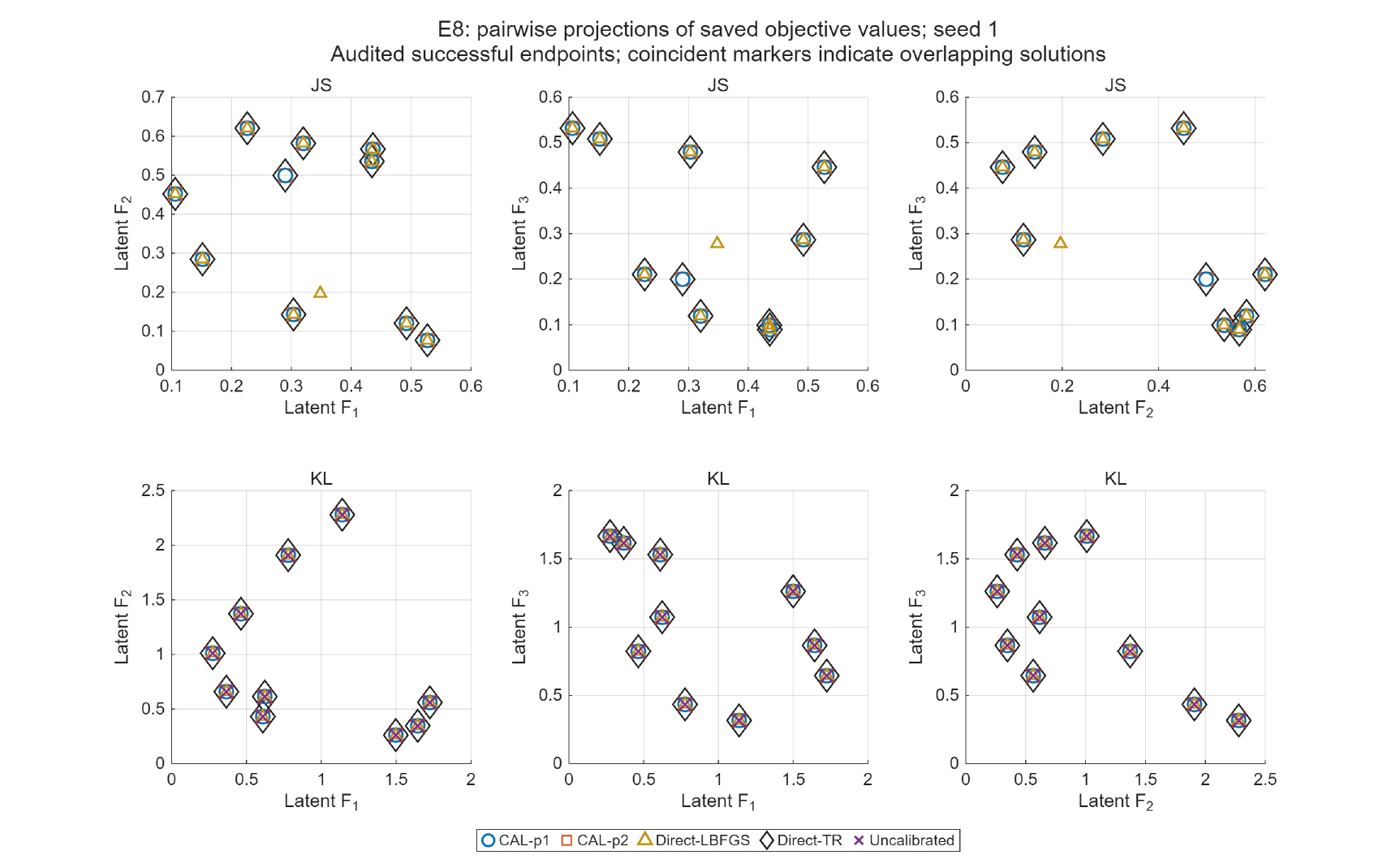}
 \caption{E8 pairwise projections of latent objectives at audited successful endpoints, seed 1. Coincident markers indicate overlapping solutions. The points are sampled stationary trade-offs, not a certified full Pareto front.}
 \label{fig:e8-tradeoffs}
\end{figure}

\paragraph{Decoded consequences.}
Figure~\ref{fig:e8-main} shows the changes in decoded digits.
Independent class proportions and image-space MMD$^2$
(Figure~\ref{fig:e8-images}) describe the resulting distributions.
They do not enter the optimized objective or certify stationarity.
The targets have six Gaussian components each, while the model has two
with a fixed shared covariance. Thus the aggregate need not be representable
by the model. E8 validates a restricted low-dimensional fit, without
establishing better image quality or faster three-target optimization.

\begin{figure}[!htbp]
 \centering
 \includegraphics[width=0.9\linewidth]{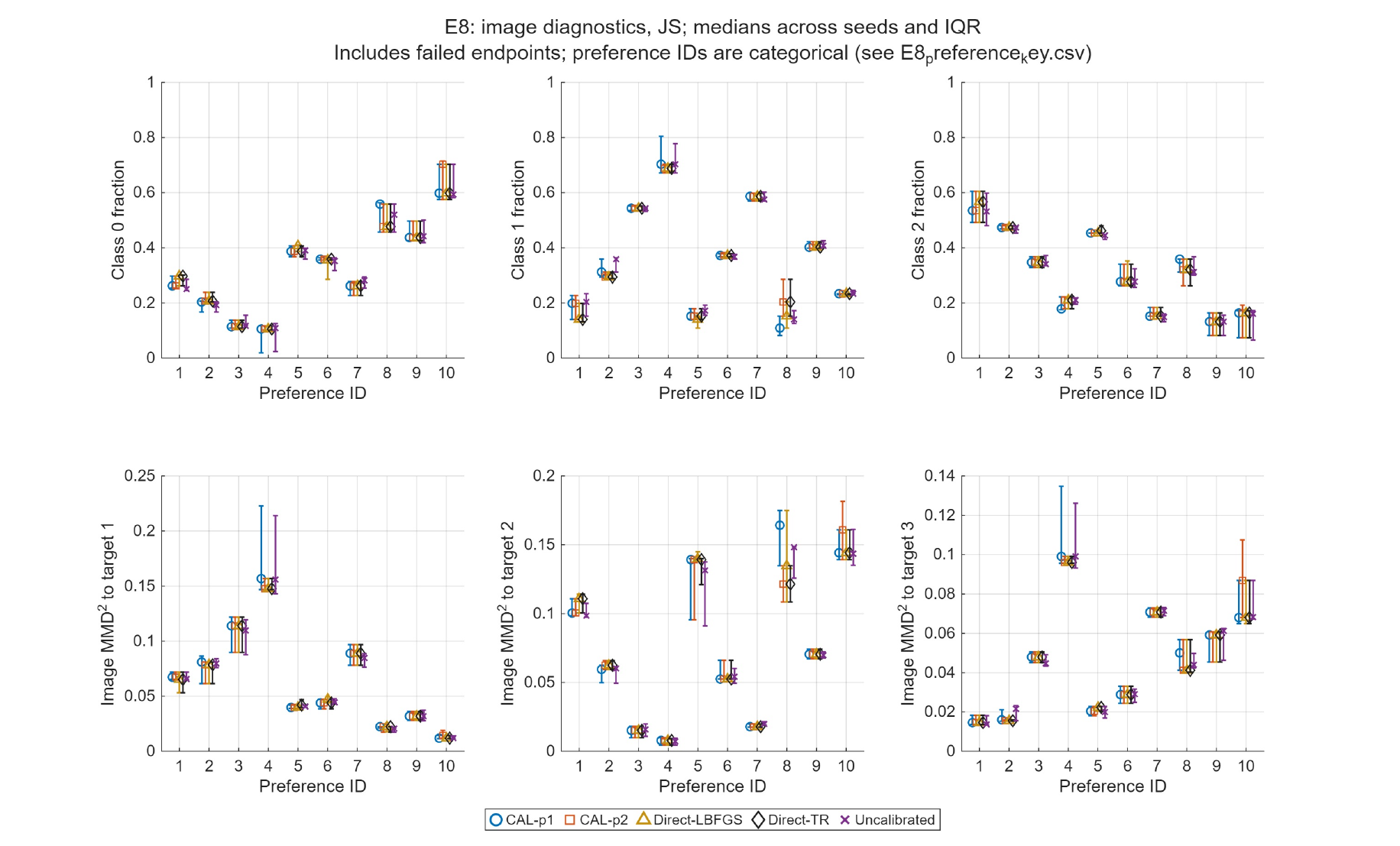}
 \caption{E8 held-out image diagnostics: class fractions and image MMD$^2$ under JS, summarized by seed medians and IQRs. Preference IDs are categorical and follow the saved preference key. Failed endpoints are included. These quantities describe decoded distributions rather than certify optimality.}
 \label{fig:e8-images}
\end{figure}

\subsection{Complementary acceleration across preferences (E6)}
\label{sec:e6}

Standard implicit differentiation provides a complementary initializer
for repeated preferences, independently of $\alpha$-aggregation
(Section~\ref{app:continuation}). E6 compares Cold, Warm, and Tangent
initializations with the same CAL-p2 corrector on categorical JS and
KL+quartic problems, both orientations, and ten seeds. All 20,880 point
solves meet the true-gradient tolerance $10^{-8}$, with maximum reference
objective discrepancy $9.65\times10^{-15}$. At 49 preferences, Tangent
has median paired speedups $1.26$--$1.59\times$ over Warm and
$2.03$--$2.10\times$ over Cold, including aggregation, tangent solves,
and correction. Reusing regular stationary branches thus reduces the
work of the calibrated corrector (Table~\ref{tab:e6-cost}).

\begin{table}[!htbp]
\centering\small
 \setlength{\abovecaptionskip}{0pt}
 \caption{E6: Tangent costs at 49 preferences; paired seed medians. Timings include aggregation, initialization, and correction. F/R: forward/reverse; KL4: KL+quartic.}
 \label{tab:e6-cost}
 \setlength{\tabcolsep}{4pt}
 \begin{tabular}{@{}lccc@{}}
 \toprule
 & \multicolumn{2}{c}{Time speedup} & True-jet ratio\\
 \cmidrule(lr){2-3}
 Case & vs Cold & vs Warm & vs Cold\\
 \midrule
 JS, F  & $2.099$ & $1.590$ & $0.581$\\
 JS, R  & $2.044$ & $1.474$ & $0.581$\\
 KL4, F & $2.087$ & $1.376$ & $0.562$\\
 KL4, R & $2.026$ & $1.263$ & $0.516$\\
 \bottomrule
 \end{tabular}
\end{table}

\par\medskip
Along a regular branch, Tangent has $O(\norm{\delta\lambda}^2)$
initialization error, while Warm generally has first-order error.
Independent E6 probes recover these orders
(Figure~\ref{fig:e6-prediction}). The same corrector and
stopping test are used throughout (Appendix~\ref{app:e6-details}).

\begin{figure}[!htbp]
  \centering
  \includegraphics[width=0.817\linewidth]{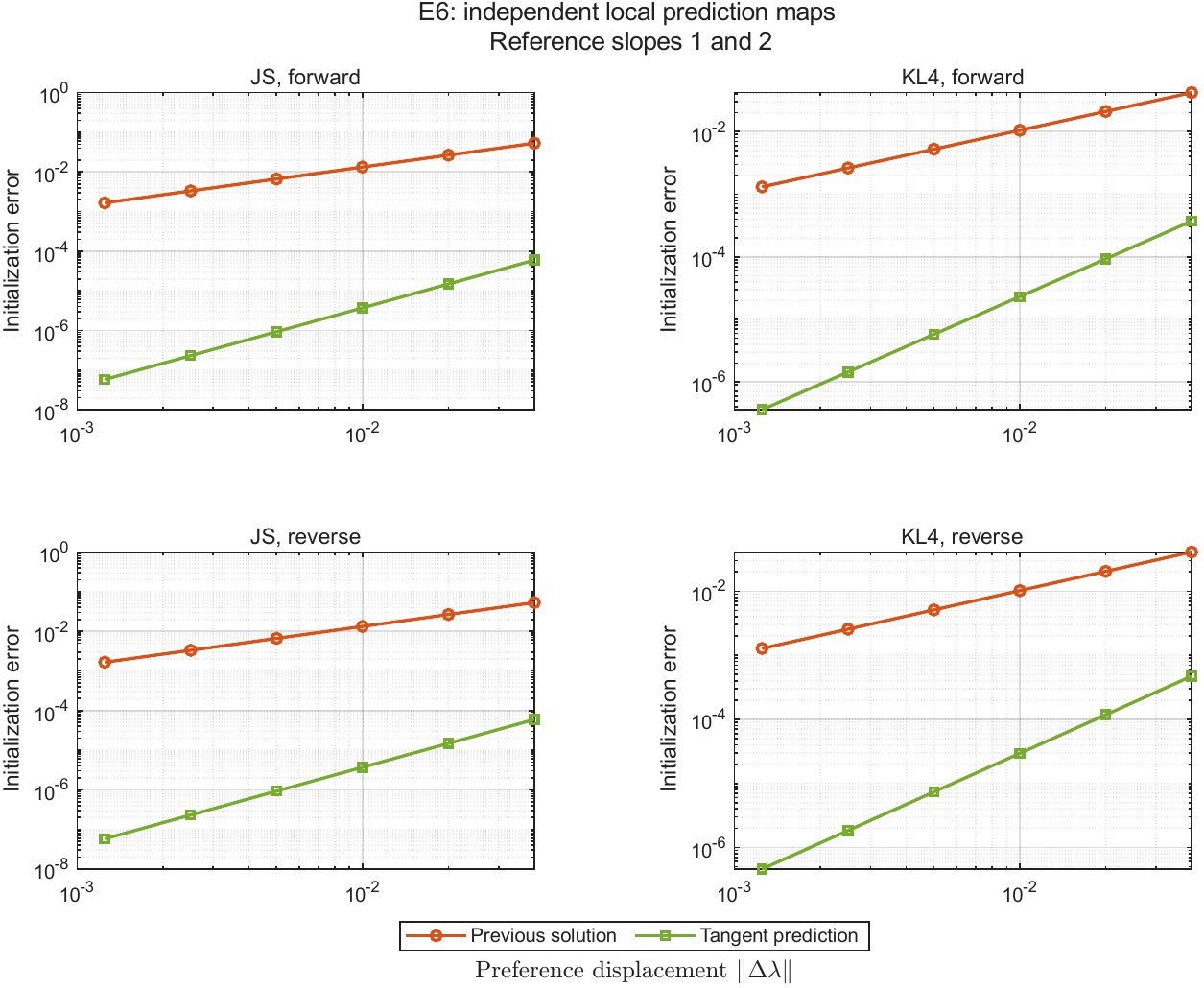}
  \caption{E6 independent prediction errors: medians over signs within each seed, then over ten seeds. Warm uses the base reference. Tangent adds its sensitivity prediction. Dashed slopes one and two nearly overlap the curves. The horizontal axis measures preference displacement, not corrector iteration.}
  \label{fig:e6-prediction}
  \label{fig:e6-main-prediction}
\end{figure}

\FloatBarrier
\section{Discussion and conclusion}
\label{sec:conclusion}

CAL-$\alpha$-MODM retains a compressible nonlinear remainder beyond a
matched Taylor jet. Calibration controls parameter-step error, and
true-objective tests certify progress. The stationarity and order-$p$
local guarantees hold under the stated assumptions without target
proximity. Canonical cancellation further improves the order-two model
at common matching under the stronger inner-accuracy condition.

\paragraph{Computational scope.}
E9 identifies a regime where cheaper predictor queries offset calibration
overhead. E6 adds initialization savings for repeated preferences.
When the true objective aggregates exactly, as in the KL control,
residual calibration is unnecessary.

\paragraph{Why latent matching is meaningful.}
Let $K$ be a fixed decoding map or a common stochastic decoder, and
write $P_i^Z,Q_\theta^Z$ for latent distributions, with $KP$ denoting
the decoded law of $P$. Data processing for
$f$-divergences \citep{csiszar1967} gives, in either orientation,
\[
 D_f^\sigma(KP_i^Z,KQ_\theta^Z)
 \leq D_f^\sigma(P_i^Z,Q_\theta^Z).
\]
Thus the exact latent scalarization upper-bounds the weighted decoded
divergences. This bound does not transfer stationarity to the decoded
objective. In E8, the decoded target laws come from fitted latent
mixtures, not empirical MNIST distributions. Independent image
diagnostics assess agreement with held-out image targets.

The demonstrated scaling is in target count $m$, not parameter dimension
$d$ or latent dimension $D$: E8/E9 use $d=5,D=2$. Dense factorization and
tensor-grid integration can cost $O(d^3)$ and $O(k^D)$, respectively,
with $k$ nodes per coordinate. Matrix-free variants and stochastic
oracles remain untested extensions (Section~\ref{app:scalability}).
The guarantees concern stationarity, not full Pareto coverage.
\section*{Reproducibility statement}
\addcontentsline{toc}{section}{Reproducibility statement}
Section~\ref{sec:models} proves the model identities and error bounds.
Section~\ref{sec:theory} states the exact-oracle assumptions, and
Appendices~\ref{app:geometry}--\ref{app:trust} give the remaining proofs.
Appendix~\ref{app:numerics} records all ten MATLAB experimental protocols,
including seeds, tolerances, timing boundaries, and audits. E5 and E10
use 120-digit arithmetic. Histories, paired timings, and validation
records were retained as described in the protocols.

\section*{AI use statement}
\addcontentsline{toc}{section}{AI use statement}
Generative AI tools were used to assist with polishing the language, improving readability and organization, and formatting the manuscript in \LaTeX{}. They were also used to assist in generating and debugging MATLAB code for the numerical experiments. All AI-generated content was carefully reviewed, revised, and verified by the authors, who assume full responsibility for the final manuscript.

\phantomsection

\clearpage
\appendix
\section{Proof of the residual derivative bounds}
\label{app:geometry}
\label{app:calibration-size}

\paragraph{Proof of Proposition~\ref{prop:residual-tensors}.}
Write $h=h_f$, $u=p/q_\theta$, $z=\nabla\log q_\theta$, and $J=\nabla^2\log q_\theta$.  For the forward orientation,
\begin{align}
 \nabla R^-(\theta)
 &=\int q_\theta\bigl(h(u)-uh'(u)\bigr)z\dd\mu,\\
 \nabla^2R^-(\theta)
 &=\int q_\theta\left[\bigl(h-uh'+u^2h''\bigr)zz^\top
 +\bigl(h-uh'\bigr)J\right]\dd\mu.
\end{align}
Because $h^{(j)}(1)=0$ for $j=0,1,2,3$, we have
$h(u)=O(|u-1|^4)$, $h'(u)=O(|u-1|^3)$, and $h''(u)=O(|u-1|^2)$.  The claimed bounds follow.  For the reverse orientation, with $u=q_\theta/p$,
\begin{align}
 \nabla R^+(\theta)&=\int q_\theta h'(u)z\dd\mu,\\
 \nabla^2R^+(\theta)&=\int q_\theta[(h'(u)+uh''(u))zz^\top+h'(u)J]\dd\mu,
\end{align}
and the same Taylor bounds apply. For the third derivative, put
$L=\nabla^3\log q_\theta$ and let $\operatorname{Sym}_3(z\otimes J)$
denote the sum over its three distinct placements. Differentiating
once more gives, in the forward orientation,
\[
 \nabla^3R^-=
 \int q_\theta\bigl[(h-uh'-u^3h''')z^{\otimes3}
 +(h-uh'+u^2h'')\operatorname{Sym}_3(z\otimes J)
 +(h-uh')L\bigr]\dd\mu.
\]
For the reverse orientation the three coefficients are respectively
$h'+3uh''+u^2h'''$, $h'+uh''$, and $h'$.
Using $h'''(u)=O(\abs{u-1})$, the preceding Taylor bounds, and the
bounded tensors $z,J,L$ proves~\eqref{eq:residual-third-bound}.
Weighted sums give the corresponding scalarized residual bound.
\hfill$\square$

\section{Proofs of the convergence guarantees}
\label{app:trust}

\subsection{Proof of Theorem~\ref{thm:global}}

Since $m_t^{(p)}(0)=F(\theta_t)$,
\begin{equation}
 \abs{\operatorname{ared}_t-\operatorname{pred}_t}\leq\kappa_f\Delta_t^2.
 \label{eq:ratio-error}
\end{equation}
Define
\begin{equation}
 c_\rho:=\min\left\{\frac{1}{\kappa_H},
 \frac{(1-\eta_2)\kappa_c}{\kappa_f}\right\}.
 \label{eq:c-rho}
\end{equation}
If $g_t\neq0$ and
\begin{equation}
 \Delta_t\leq c_\rho\norm{g_t},
 \label{eq:small-radius}
\end{equation}
then~\eqref{eq:cauchy}--\eqref{eq:ratio-error} give
$|1-\rho_t|\leq1-\eta_2$.  Thus a sufficiently small radius relative to the gradient produces a very successful step and cannot be repeatedly contracted.

Fix $0<\eps\leq\eps_0:=\Delta_0/(\gamma_1c_\rho)$.  Before the first $\eps$-stationary point,
\begin{equation}
 \Delta_t\geq\underline\Delta_\eps:=\gamma_1c_\rho\eps.
 \label{eq:radius-lower}
\end{equation}
Indeed, an unsuccessful iteration must have $\Delta_t>c_\rho\norm{g_t}>c_\rho\eps$, so its contracted successor exceeds~\eqref{eq:radius-lower}. A radius below $c_\rho\eps$ instead gives $\rho_t\geq\eta_2$ and is not contracted. The claim follows by induction from the definition of $\eps_0$.

Every successful iteration before termination therefore decreases the true objective by at least
\begin{equation}
 \eta_1\kappa_c\eps\min\{\gamma_1c_\rho\eps,\eps/\kappa_H\}=c_s\eps^2.
\end{equation}
Boundedness below limits the successful iterations to
$(F(\theta_0)-F_{\inf})/(c_s\eps^2)$.  If $N_u$ and $N_v$ denote unsuccessful and very successful iterations, respectively, the radius recursion and~\eqref{eq:radius-lower} imply
\begin{equation}
 N_u|\log\gamma_1|
 \leq N_v\log\gamma_2+\log\!\frac{\Delta_{\max}}{\gamma_1c_\rho\eps}.
\end{equation}
Since $N_v$ is no larger than the successful count and $|\log\eps|=O(\eps^{-2})$ on $(0,1)$, this proves~\eqref{eq:complexity}. Larger $\eps$ are absorbed into $C_0$. The constants need not be uniform in $m$ or target heterogeneity.

For the zero-tolerance run, either an exact stationary point is reached or the preceding argument gives $\liminf_t\norm{g_t}=0$. To prove convergence of the entire gradient sequence, fix $\eps>0$ and set
\begin{equation}
 b_\eps=\eta_1\kappa_c\eps
 \min\{1,\eps/(\kappa_H\Delta_{\max})\}>0.
 \label{eq:gradient-excursion-constant}
\end{equation}
At each successful iteration with $\norm{g_t}\geq\eps$, Cauchy decrease implies
\begin{equation}
 F(\theta_t)-F(\theta_{t+1})
 \geq b_\eps\Delta_t\geq b_\eps\norm{s_t}.
 \label{eq:decrease-length}
\end{equation}
If $\limsup_t\norm{g_t}>0$, choose $\eps$ with $2\eps<\limsup_t\norm{g_t}$. Since the liminf is zero, there are infinitely many disjoint intervals starting with $\norm{g_t}\geq2\eps$ and ending at its first subsequent value below $\eps$. Within each interval, Lipschitz continuity requires total accepted-step length at least $\eps/L_g$. Rejected steps do not move the iterate. Equation~\eqref{eq:decrease-length} therefore gives objective decrease at least $b_\eps\eps/L_g$ per interval, contradicting the lower bound on $F$. Thus $\norm{g_t}\to0$. \hfill$\square$

\paragraph{Proof of Corollary~\ref{cor:pareto}.}
Continuity at any accumulation point $\bar\theta$ gives
$\sum_i\lambda_i\nabla F_i^\sigma(\bar\theta)=0$. A direction that strictly decreases every component to first order would have negative inner product with this weighted sum, a contradiction. If the component objectives are convex, the first-order convexity inequality shows that $\bar\theta$ globally minimizes their weighted sum. Because every $\lambda_i>0$, any point dominating $\bar\theta$ would strictly lower that sum. Hence $\bar\theta$ is globally Pareto optimal. \hfill$\square$

\subsection{Local stationary branch and proof of Theorem~\ref{thm:local}}

For a generic anchor $\theta$, write $m_\theta^{(p)}$ for~\eqref{eq:model} and define
$G_p(\theta,s)=\nabla_sm_\theta^{(p)}(s)$.

\begin{lemma}[Local model stationary branch]
\label{lem:local-branch}
Under the hypotheses of Theorem~\ref{thm:local}, there are open balls around $\theta^\star$ and $0$ and a unique continuously differentiable map $s_p(\theta)$ such that
$G_p(\theta,s_p(\theta))=0$, $s_p(\theta^\star)=0$, and
\begin{equation}
 \norm{s_p(\theta)}\leq C_s\norm{\theta-\theta^\star},
 \qquad
 \nabla_s^2m_\theta^{(p)}(s)\succeq\frac{\mu}{2}I
 \label{eq:branch-bounds}
\end{equation}
throughout a smaller product neighborhood.  If $s$ lies in this neighborhood, then
$\norm{s-s_p(\theta)}\leq(2/\mu)\norm{G_p(\theta,s)}$.
\end{lemma}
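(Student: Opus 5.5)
The plan is to apply the implicit function theorem to the map $G_p(\theta,s)=\nabla_sm_\theta^{(p)}(s)$ at the base point $(\theta^\star,0)$. First I will record the regularity of $G_p$. By Definition~\ref{def:model},
\[
 m_\theta^{(p)}(s)=\Acal(\theta+s)+\sum_{j=0}^p\frac1{j!}\nabla^j\Ecal(\theta)[s]^j .
\]
Since $F,\Acal\in C^{p+1}$, we have $\Ecal=F-\Acal\in C^{p+1}$. Differentiating in $s$ gives
\[
 G_p(\theta,s)=\nabla\Acal(\theta+s)+\sum_{j=1}^p\frac{1}{(j-1)!}\nabla^j\Ecal(\theta)[s]^{j-1}.
\]
The coefficients $\nabla^j\Ecal(\theta)$ appear only up to order $p$, so they are $C^1$ in $\theta$. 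The term $\nabla\Acal(\theta+s)$ is $C^p$ with $p\geq2$. Hence $G_p$ is $C^1$ jointly in $(\theta,s)$, and $\nabla_sG_p=\nabla_s^2m_\theta^{(p)}$ is continuous jointly.

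Next I check the base point. At $s=0$, the matching property~\eqref{eq:matching} gives $G_p(\theta,0)=\nabla F(\theta)$, so $G_p(\theta^\star,0)=0$. Also $\nabla_sG_p(\theta^\star,0)=\nabla^2F(\theta^\star)\succeq\mu I$. By joint continuity of $\nabla_s^2m_\theta^{(p)}(s)$, there is a product neighborhood $B_r(\theta^\star)\times B_\rho(0)$ on which $\nabla_s^2m_\theta^{(p)}(s)\succeq(\mu/2)I$, which gives the second part of~\eqref{eq:branch-bounds}. The implicit function theorem then supplies balls and a unique $C^1$ map $s_p$ with $s_p(\theta^\star)=0$ and $G_p(\theta,s_p(\theta))=0$. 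After shrinking the $\theta$-ball so that $s_p$ maps into $B_\rho(0)$, uniqueness within $B_\rho(0)$ holds globally, not just in the IFT ball. The reason is that $m_\theta^{(p)}$ is $(\mu/2)$-strongly convex on the convex set $B_\rho(0)$, so it has at most one stationary point there.

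For the Lipschitz bound, I use that $s_p$ is $C^1$. Shrinking the $\theta$-ball to a compact sub-ball, $\norm{Ds_p}$ is bounded by some $C_s$ there. This bound can also be written explicitly as
\[
 Ds_p=-(\nabla_sG_p)^{-1}\nabla_\theta G_p,
\]
with $\norm{(\nabla_sG_p)^{-1}}\leq2/\mu$. The mean value inequality along the segment from $\theta^\star$, together with $s_p(\theta^\star)=0$, gives $\norm{s_p(\theta)}\leq C_s\norm{\theta-\theta^\star}$.

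Finally, the inexact-step estimate follows from strong monotonicity. For $s$ and $s_p(\theta)$ in the convex ball $B_\rho(0)$, the integral form of the mean value theorem gives
\[
 \inner{G_p(\theta,s)-G_p(\theta,s_p(\theta))}{s-s_p(\theta)}\geq\frac{\mu}{2}\norm{s-s_p(\theta)}^2 .
\]
Since $G_p(\theta,s_p(\theta))=0$, Cauchy--Schwarz yields $\norm{s-s_p(\theta)}\leq(2/\mu)\norm{G_p(\theta,s)}$. The only delicate point is bookkeeping: I must choose the neighborhoods consistently (uniform Hessian bound, IFT domain, image of $s_p$ inside $B_\rho(0)$) so that every statement holds on one smaller product neighborhood. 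I expect this to be routine once the convex ball $B_\rho(0)$ is fixed first and the $\theta$-ball is shrunk afterward.
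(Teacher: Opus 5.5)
Your proposal is correct and follows the same route as the paper: derivative matching at $(\theta^\star,0)$ feeds the implicit function theorem, continuity gives the $\mu/2$ Hessian bound, the locally bounded derivative of $s_p$ gives the linear bound, and strong monotonicity of $G_p(\theta,\cdot)$ gives the inexact-step estimate. Your additional bookkeeping (fixing the convex ball $B_\rho(0)$ first, using strong convexity for uniqueness there, and writing $Ds_p$ explicitly) makes explicit details that the paper's terse proof leaves implicit.
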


\paragraph{Proof.}
Derivative matching gives $G_p(\theta^\star,0)=\nabla F(\theta^\star)=0$ and
$\nabla_sG_p(\theta^\star,0)=\nabla^2F(\theta^\star)\succ0$.  The implicit function theorem gives the unique branch.  Its derivative is locally bounded, which yields the first bound in~\eqref{eq:branch-bounds}. Continuity gives the Hessian bound.  Strong monotonicity of $G_p(\theta,\cdot)$ then gives the final inequality. \hfill$\square$

\paragraph{Proof of Theorem~\ref{thm:local}.}

Let $e_t=\norm{\theta_t-\theta^\star}$ and $s_p^t=s_p(\theta_t)$. Strong regularity gives constants $c_1,c_2>0$ such that locally
\begin{equation}
 c_1e_t\leq\norm{\nabla F(\theta_t)}\leq c_2e_t.
 \label{eq:gradient-equivalence}
\end{equation}
Lemma~\ref{lem:local-branch} and~\eqref{eq:inner} give
$\norm{s_t-s_p^t}=O(e_t^p)$ and hence $\norm{s_t}\leq c_3e_t$. At the trial point,~\eqref{eq:model-grad} and~\eqref{eq:inner} give
\begin{align}
 \norm{\nabla F(\theta_t+s_t)}
 &\leq\frac{L_{p+1}}{p!}\norm{s_t}^p
 +\kappa_s\norm{\nabla F(\theta_t)}^p
 \leq c_4e_t^p.
\end{align}
Equation~\eqref{eq:gradient-equivalence} at the trial point yields the order-$p$ error bound whenever the step is accepted. To verify acceptance, let $M$ bound the local model Hessians. Since
$g_t=-\int_0^1\nabla_s^2m_t^{(p)}(u s_p^t)s_p^t\dd u$,
we have $\norm{s_p^t}\geq\norm{g_t}/M$. Strong convexity gives
$m_t^{(p)}(0)-m_t^{(p)}(s_p^t)\geq\mu\norm{s_p^t}^2/4$,
while
$m_t^{(p)}(s_t)-m_t^{(p)}(s_p^t)=O(e_t^{2p})$.
Together with the local upper bounds, this proves
$\operatorname{pred}_t=\Theta(e_t^2)$ for nonstationary anchors. The value-model error is $O(e_t^{p+1})$, so
$|1-\rho_t|=O(e_t^{p-1})\to0$. Fix $\Delta_0\in(0,\Delta_{\max}]$.
Choose the initial neighborhood sufficiently small, depending on $\Delta_0$, so that the returned steps satisfy $\norm{s_t}<\Delta_0$, the local error bound is contractive, and $\rho_t\geq\eta_2$.
The accepted iterates then remain in this neighborhood, while the radius update never decreases the radius. Thus the local steps remain untruncated and accepted, and~\eqref{eq:qorder} applies. \hfill$\square$

\subsection{Proof of Corollary~\ref{cor:lazy}}
\label{app:lazy-proof}

Lazy steps only decrease $F$. Thus each successful certified iteration
still contributes its original true decrease, and the radius recursion
is unchanged. The radius lower bound and successful-iteration count
in the proof of Theorem~\ref{thm:global} therefore apply before the
first $\eps$-stationary certified anchor.

For the full gradient limit, the path from $\theta_t$ to
$\theta_{t+1}$ has length at most
$(1+\kappa_{\rm move})\norm{s_t}$ at a successful iteration and zero
at a rejected one. Equation~\eqref{eq:decrease-length}, with its
constant divided by $1+\kappa_{\rm move}$, controls this entire path
length whenever $\norm{g_t}\geq\eps$. The same disjoint-excursion
argument proves $\norm{g_t}\to0$ at certified anchors. Every intermediate
lazy point is within $\kappa_{\rm path}\norm{g_t}$ of the block endpoint.
Lipschitz continuity transfers the limit to those points as well.
At most $K_{\rm lazy}$ extra true-value calls accompany each certified
iteration, proving the stated evaluation count. These safeguards
establish global stationarity, not the local order of the augmented
sequence. \hfill$\square$

\section{Experimental protocols and additional results}
\label{app:numerics}

The generators and their normalization are defined in Section~\ref{sec:generators}.

\subsection{Local normal-form scaling}
\label{app:e1-details}

For E1, each of 20 seeds uses $K=128$, draws $z_k\sim\mathcal N(0,0.55^2)$ and $v_k\sim\mathcal N(0,1)$, sets $P=\softmax(z)$, centers $v$ under $P$, and rescales it to $\norm v_\infty=1$.  Along
\begin{equation}
 Q_t(k)=\frac{P_k\exp(tv_k)}{\sum_\ell P_\ell\exp(tv_\ell)},
 \qquad Q_0=P,
\end{equation}
we evaluate $R_f^-(t)=D_f(P\|Q_t)-a_fD_{\alpha_f}(P\|Q_t)$ and $R_f^+(t)=D_f(Q_t\|P)-a_fD_{\alpha_f}(Q_t\|P)$ and their first two analytic derivatives.  The 21 step sizes span $[10^{-3.2},10^{-0.7}]$ and slopes are fitted on $[2\times10^{-3},3\times10^{-2}]$. We test JS, KLChi, and KL4 under the conventions above, together with the exact canonical control.

The fitted value/first-/second-derivative orders are $(4,3,2)$.  Across the 18 aggregate cells, the largest mean-exponent deviation is $7.23\times10^{-4}$, the largest across-seed standard deviation is $5.55\times10^{-3}$, and all exact-control residuals are below $10^{-12}$.

\begin{figure}[!htbp]
  \centering
  \includegraphics[width=0.902\linewidth]
  {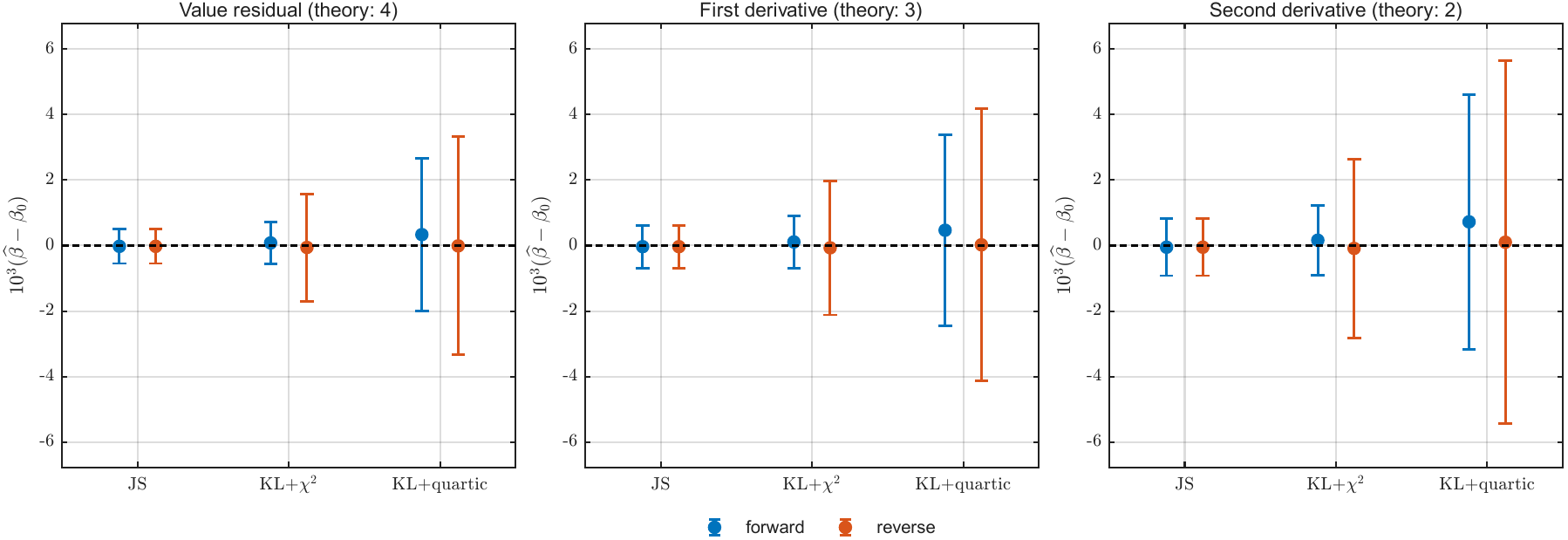}
  \caption{Fitted-exponent deviations for E1, scaled by $10^3$.  Markers show means over 20 seeds, error bars one standard deviation, and the dashed line exact agreement.}
  \label{fig:e1-slopes}
\end{figure}

\subsection{Exact aggregation and amortized cost}
\label{app:e2-details}

For the identity check, E2 uses $K=128$, $m\in\{2,4,8,16,32,64,128\}$, $\alpha\in\{-3,-2,-1,0,1,2,3\}$, both orientations, 20 seeds, and eight query distributions per cell.  All distributions are softmaxes of Gaussian logits with scale $0.65$ and all preferences are positive.  The direct scalarization is compared with~\eqref{eq:agg-minus}--\eqref{eq:agg-plus}, including $\omega^\sigma_{\lambda,\alpha}$.  All 15,680 query comparisons pass the $10^{-10}$ tolerance. The maximum scaled identity and normalization errors are $3.77\times10^{-15}$ and $6.66\times10^{-16}$.

The timing benchmark uses $K=2048$, $m\in\{2,4,8,16,32,64,128,256\}$, 256 sequential queries, and 30 paired repetitions with alternating order.  A break-even count is reported only when the central empirical 95\% interval of paired per-query savings is positive.  At $m=256$, median target-kernel speedups are $12.00\times$, $10.39\times$, and $12.08\times$ for forward KL, reverse KL, and forward $\alpha=0$. End-to-end speedups are $2.79\times$, $2.46\times$, and $3.14\times$, with break-even counts 45, 5, and 100.

\begin{figure}[!htbp]
  \centering
  \includegraphics[width=0.902\linewidth]{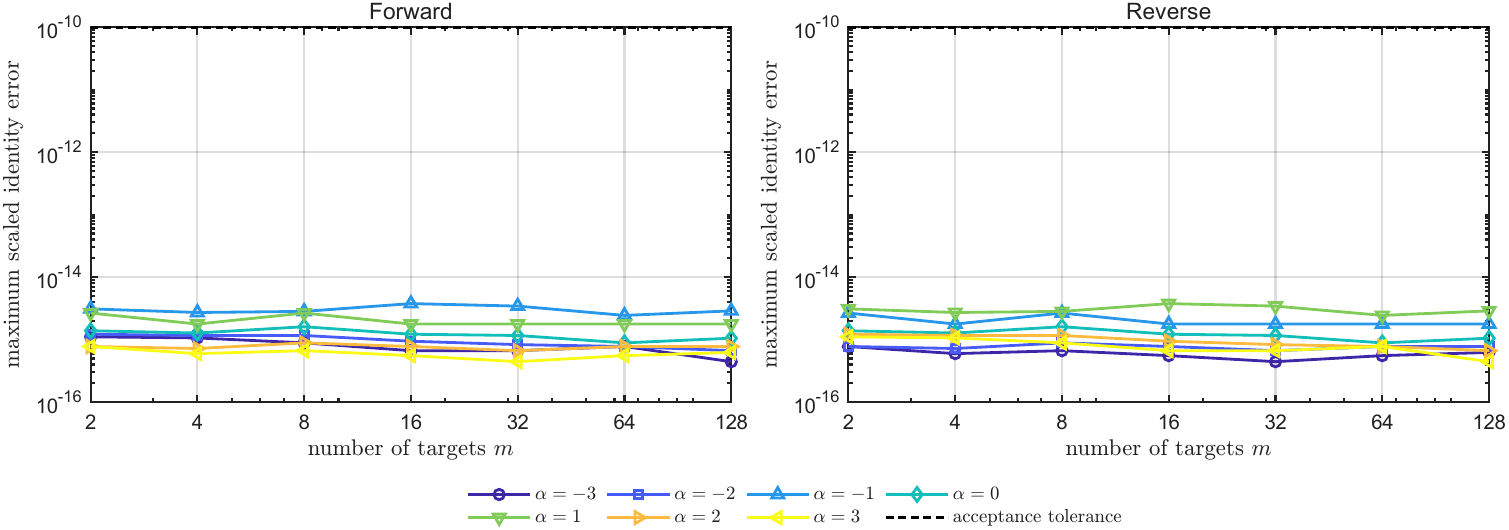}
  \caption{E2 aggregation identity errors in both orientations: maximum scaled discrepancy over 20 seeds and eight queries per seed, including the aggregate's constant and scale. The dashed line marks tolerance $10^{-10}$. Errors remain at double-precision roundoff.}
  \label{fig:e2-identity}
\end{figure}

\subsection{Calibrated-model order diagnostics}
\label{app:e3-details}

E3 uses $K=64$ categories, 16 targets, 20 seeds, and the three generators
in Section~\ref{sec:generators}, each in both orientations.
For each seed, draw $z_c\sim\mathcal N(0,0.45^2I_K)$ and
$z_i\sim\mathcal N(0,I_K)$ independently. Set
$Q_0=\softmax(z_c)$ and $P_i=\softmax(z_i)$.
Normalize weights $\lambda_i\propto0.25+U_i$, with independent
$U_i\sim\operatorname{Unif}(0,1)$. Center a Gaussian direction $d$
under $Q_0$ and rescale it to $\norm d_\infty=1$. Both signed rays are used.

The 25 step magnitudes are logarithmically spaced in $[10^{-3},0.2]$.
Log--log fits use $[2\times10^{-3},3\times10^{-2}]$ and exclude errors
below $10^{-13}$. If fewer than six points remain, the upper endpoint
is extended to $6\times10^{-2}$. The predictor includes the aggregation
constant and scale. Residual coefficients through degree three are
computed with fourth-order truncated-series arithmetic.

For $p=1,2,3$, value and directional-derivative fits recover orders
$(p+1,p)$ (Figure~\ref{fig:e3-model-scaling}). The median maximum
absolute log-likelihood ratio is $3.94$, and the median target-to-center
KL is $0.540$. Each of the 18 generator--orientation--order cells has
40 trials (20 seeds, two signs). All 720 trials yield valid value and derivative fits. The largest
mean-order deviation is $1.67\times10^{-2}$ and the largest standard
deviation is $0.148$ (Figure~\ref{fig:e3-slope-summary}). At $p=2$, one
value fit and two directional-derivative fits have $R^2<0.95$. The two
derivative residuals change sign within the window. All fits are retained.

All 80 exact-$\alpha$ controls with $\alpha=0.4$ pass. Their maximum
value, directional-derivative, and residual-jet discrepancies are
$1.94\times10^{-15}$, $1.25\times10^{-16}$, and $1.78\times10^{-15}$.

\begin{figure}[!htbp]
\centering
\includegraphics[width=0.864\linewidth]{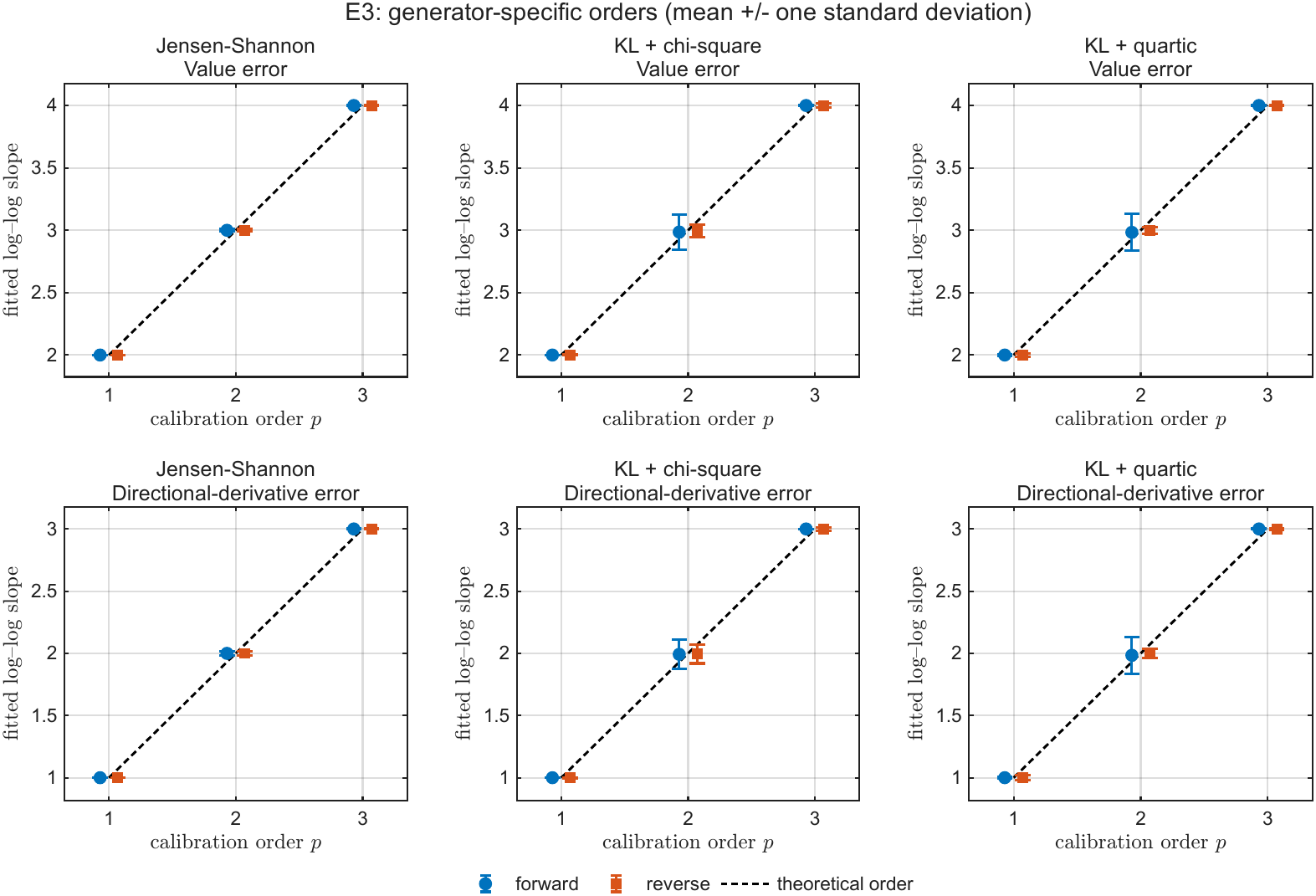}
\caption{E3 fitted model-error orders by generator (columns): value errors (top, order $p+1$) and directional-derivative errors (bottom, order $p$). Markers and bars show means and one standard deviation over 20 seeds and two signed rays, not confidence intervals. Blue circles/orange squares denote forward/reverse orientations, offset for visibility. Dashed lines mark theoretical orders.}
\label{fig:e3-slope-summary}
\end{figure}

\subsection{Full optimization: robustness and cost}
\label{app:e4-details}

\paragraph{Problem construction.}
We use $Q_\theta=\softmax(b+B\theta)$ with $K$ categories and
$d<K$ parameters.  Let
$\Pi=I_K-\mathbf{1}\mathbf{1}^{\mathsf T}/K$.
We obtain $U\in\R^{K\times d}$ by QR factorization of a
column-centered Gaussian matrix and set $B=1.2U$.
Independently draw
$b\sim\mathcal N(0,0.25^2I_K)$,
$\xi_i\sim\mathcal N(0,I_d)$,
$z_i\sim\mathcal N(0,I_K)$, and
$\zeta\sim\mathcal N(0,I_d)$.
The targets and initialization are
\begin{align}
 P_i
 &=\softmax\!\left(b+\tau B\xi_i+0.1\tau\Pi z_i\right),
 \label{eq:e4-targets}\\
 \theta_0
 &=-\frac{0.75\tau}{m}\sum_{i=1}^m\xi_i
   +0.8\max\{\tau,0.25\}\zeta .
 \label{eq:e4-initialization}
\end{align}
The perturbation $\Pi z_i$ allows targets outside the model
family.  A deterministic seed mapping fixes each problem,
and all methods use the same targets, preference, and
initialization for a paired comparison.

We use JS and KL4 with the generator conventions stated in Section~\ref{sec:generators}, and test both divergence orientations.

\paragraph{Experimental configurations.}
The robustness experiment fixes $(K,d,m)=(128,16,32)$ and
varies $\tau\in\{0.1,0.5,1.0,1.5\}$.
Preferences are uniform, drawn from
$\operatorname{Dirichlet}(1,\ldots,1)$, or biased with
$\lambda_1=0.6$ and $\lambda_i=0.4/(m-1)$ for $i>1$.
The scaling experiment uses $(K,d)=(512,64)$,
$m\in\{8,32,128\}$, $\tau=1$, and Dirichlet preferences.
Both experiments use seeds $1,\ldots,30$, two generators,
two orientations, and seven methods.
They contain 10,080 and 2,520 method runs, respectively,
giving 1,800 main-experiment runs per method.

\paragraph{Methods and numerical settings.}
The seven methods are Direct-LBFGS, Direct-TR, Direct-NL-TR,
Uncalibrated-$\alpha$, CAL-$\alpha$-$p1$,
CAL-$\alpha$-$p2$, and Lazy-CAL-$\alpha$-$p1$.
The uncalibrated method uses the aggregated predictor without
residual-derivative calibration, but its success is assessed
using the true objective gradient.

The stopping tolerance is $10^{-6}$, with a budget of 120
certified iterations and 360 trust-region trials.
Trust-region parameters are
$\Delta_0=0.5$, $\Delta_{\max}=2$,
$\Delta_{\min}=10^{-10}$,
$(\eta_1,\eta_2)=(0.1,0.75)$, and
$(\gamma_1,\gamma_2)=(0.5,2)$.
The Cauchy-decrease check uses $\kappa_c=0.1$ and the
conservative curvature bound $10^8$.
The common nonlinear subproblem routine permits at most
12 inner iterations and uses gradient tolerance $10^{-6}$.
Direct-LBFGS uses memory 10 and Armijo backtracking.

The lazy variant permits at most three trials per block and
uses $c_{\rm lazy}=10^{-4}$.
Its path coefficient is selected as $0.25/L_{\rm est}$, where
\[
 L_{\rm est}
 =2\max\{1,\norm{\nabla^2F(\theta^c)}_{\op}\}.
\]
This is a numerical local-curvature estimate, not a verified
global Lipschitz bound.  The implementation retains the stale
model's certified coordinate origin, enforces the path budget
and true-objective sufficient decrease, and leaves the certified
radius unchanged during lazy trials. The additional step-length
budget in Section~\ref{app:lazy} was not enforced here, so these
measurements evaluate the implemented heuristic, rather than verify
every hypothesis of Corollary~\ref{cor:lazy}.

\paragraph{Metrics and paired comparisons.}
Success is defined exclusively by
$\norm{\nabla F_\lambda^\sigma(\theta_{\rm final})}\leq10^{-6}$.
End-to-end time includes per-method pseudo-target construction, residual calibration, inner solves, and true-objective acceptance checks. Data generation, derivative audits, plotting, and export are excluded.

The component-equivalent ledger charges $m$ units per full scalarization query or pseudo-target construction and one per predictor query. Derivative orders are recorded but equally weighted: the ledger measures query structure, not floating-point work.

For method $a$ and a paired Direct-NL-TR run, define
\[
 S_a=\frac{T_{\rm Direct\text{-}NL\text{-}TR}}{T_a},
 \qquad
 R_a=\frac{C_a}{C_{\rm Direct\text{-}NL\text{-}TR}},
\]
where $T$ is recorded end-to-end time and $C$ is the
component-equivalent count.
We report medians of per-problem ratios only when both runs pass the true-gradient test.

\paragraph{Robustness and cost results.}
All six direct and calibrated methods succeed in every
main-experiment run, whereas the uncalibrated method succeeds
in 180/1,800 runs.
Table~\ref{tab:e4-cost} summarizes the scaling results at
$m=128$, with 30 seeds and two orientations per generator.

\begin{table}[!htbp]
  \centering
  \small
\setlength{\tabcolsep}{5pt}
\renewcommand{\arraystretch}{1.04}
\setlength{\abovecaptionskip}{0pt}
\setlength{\belowcaptionskip}{4pt}
  \caption{E4 at $m=128$: median paired ratios against Direct-NL-TR. Each displayed method succeeds in all 60 runs per generator. Uncalibrated fails in all 60, so its paired costs are unavailable. Higher speedups and lower call ratios are better.}
  \label{tab:e4-cost}
  \begin{tabular}{@{}lrrrr@{}}
    \toprule
    & \multicolumn{2}{c}{Time speedup}
    & \multicolumn{2}{c}{Call ratio} \\
    \cmidrule(lr){2-3}\cmidrule(l){4-5}
    Method & JS & KL+quartic & JS & KL+quartic \\
    \midrule
    Direct-LBFGS          & 12.87 & 5.77 & 0.128 & 0.227 \\
    Direct-TR             &  3.36 & 4.15 & 0.119 & 0.143 \\
    Direct-NL-TR          &  1.00 & 1.00 & 1.000 & 1.000 \\
    CAL-$\alpha$-$p1$     &  1.74 & 0.41 & 0.137 & 0.490 \\
    CAL-$\alpha$-$p2$     &  1.76 & 2.55 & 0.135 & 0.160 \\
    Lazy-CAL-$\alpha$-$p1$&  1.44 & 0.36 & 0.247 & 0.815 \\
    \bottomrule
  \end{tabular}
\end{table}

For CAL-$\alpha$-$p2$, percentile bootstrap 95\% intervals are $[1.72,1.79]$ (JS) and $[2.45,2.66]$ (KL+quartic), using 10,000 resamples of 30 seed clusters with both orientations retained. They measure between-instance variation, not repeated-timing noise.

Figure~\ref{fig:e4-convergence} contrasts calibrated convergence with uncalibrated stalling. Direct-LBFGS and Direct-TR remain competitive or faster. The reported gain is against Direct-NL-TR's repeated full-objective queries.

\begin{figure}[!htbp]
  \centering
  \includegraphics[width=0.902\linewidth]{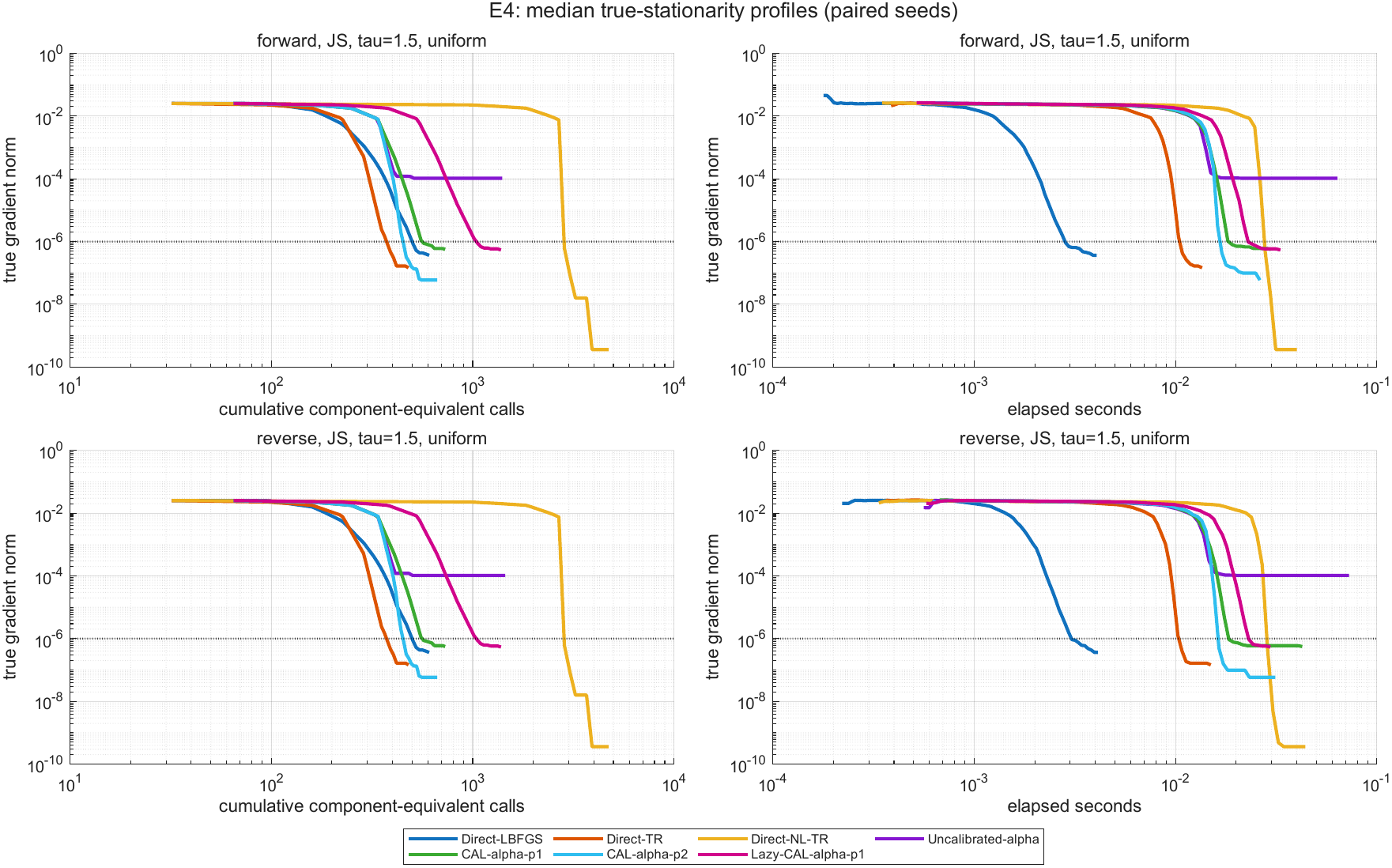}
  \caption{E4 true-gradient profiles: JS, $(K,d,m)=(128,16,32)$, $\tau=1.5$, uniform preferences. Rows separate orientations. Budgets are component-equivalent calls (left) and time (right). Curves are pointwise medians over 30 seeds after log-coordinate interpolation. Final gradients are carried forward, and budgets before the first record are unavailable. The dotted line marks $10^{-6}$.}
  \label{fig:e4-convergence}
\end{figure}

\paragraph{Implementation controls.}
Four generator--orientation derivative checks pass, with
maximum relative gradient and Hessian discrepancies
$3.06\times10^{-12}$ and $3.71\times10^{-12}$.
An additional exact-$\alpha$ experiment uses $\alpha=0.4$,
$(K,d,m)=(64,8,16)$, ten seeds, both orientations, and five
methods, giving 100 runs.
The 80 comparisons against Direct-NL-TR agree in their
success decisions, with maximum final-objective discrepancy
$2.12\times10^{-11}$.

A separate small-dimensional control uses
$(K,d,m)=(64,4,16)$, ten seeds, two generators, both
orientations, and five methods, giving 200 runs.
All 40 CAL-$\alpha$-$p3$ runs reach the true-gradient tolerance.
In this control, the third residual tensor is approximated by
centered differences of residual Hessians with step
$2\times10^{-4}$ and then symmetrized.
This tests attainable optimization accuracy, not the
Q-cubic convergence rate.
Together with the main experiments, E4 contains 12,900 runs.

\subsection{Local high-order convergence and condition ablations}
\label{app:e5-details}

\paragraph{Problem construction.}
E5 tests Theorem~\ref{thm:local} on the affine chart $q_\theta=q_0+B\theta>0$, with $(K,d,m)=(6,2,4)$.
For seed $s\in\{1,\ldots,10\}$, the MATLAB random stream is
\texttt{rng(27100+s,'twister')}.
Draw $z\sim\mathcal N(0,0.35^2I_K)$ and a standard Gaussian
matrix $G\in\R^{K\times d}$, and set
\begin{equation}
 q_0=\softmax(z),\qquad
 \widetilde G=G-\mathbf 1(q_0^\top G),\qquad
 B=\frac{0.4\,\operatorname{diag}(q_0)\widetilde G}
 {\max_k\norm{\widetilde G_{k,:}}_2}.
\end{equation}
Thus $\mathbf 1^\top B=0$.
Targets are $P_i=\softmax(\log q_0+\xi_i)$ with independent
$\xi_i\sim\mathcal N(0,0.7^2I_K)$, and preferences are
$\lambda_i=w_i/\sum_jw_j$, where $w_i$ are independent
$\operatorname{Unif}(0.25,1.25)$ draws.
We use the JS and KL+quartic generators specified in Section~\ref{sec:generators}, in both orientations.
The same primitive random draws are used across generators,
orientations, and calibration orders within each seed.

\paragraph{Precision and local solves.}
E5 uses 120-digit arithmetic. Independent damped Newton solves obtain
40 references with $\norm{\nabla F(\theta^\star)}\leq10^{-90}$ and
minimum Hessian eigenvalues in $[2.206\times10^{-3},0.665]$.
References are used only for initialization and error measurement.
Acceptance and stopping use the true objective and gradient.
Recomputing the first seed's four references at 160 digits gives a
maximum parameter discrepancy of $4.88\times10^{-108}$.
The diagnostic error floor is $10^{-80}$, without an interval certificate.

Primary runs use analytic residual derivatives through $p=2$ or $3$.
At most 35 damped Newton iterations, initialized at zero and maintaining
positive model Hessians, seek the local stationary branch.
The forcing condition~\eqref{eq:inner} uses $\kappa_s=0.01$.
An arithmetic floor of $10^{-105}$ is recorded separately. Untruncated
primary steps are accepted only if the original forcing condition holds.
The initial and maximum radii are $1$ and $4$, with
$(\eta_1,\eta_2)=(0.1,0.75)$. Rejection halves the radius, and a very
successful step doubles it. Each applied ball is limited to $0.45$
times the distance to the positive-chart boundary. The Cauchy test uses
an analytic ballwise curvature bound and $\kappa_c=0.1$.
Runs stop at true-gradient norm $10^{-30}$ or after 60 trials.

\paragraph{Independent one-step maps.}
Let $v$ be a normalized Gaussian direction and define
\begin{equation}
 r_\star=\min_{k:\norm{B_{k,:}}_2>0}
 \frac{q_{\theta^\star}(k)}{\norm{B_{k,:}}_2},\qquad
 c_\star=\min\{1,0.2r_\star\},\qquad
 \theta_{\epsilon,h}=\theta^\star+\epsilon h c_\star v,
 \quad \epsilon\in\{-1,1\}.
\end{equation}
For each problem, order, and sign, independent starts at
$h\in\{10^{-2},\ldots,10^{-8}\}$ give 1,120 probes.
Log next-error versus log initial-error fits use the four fixed radii
$h\in\{10^{-3},\ldots,10^{-6}\}$. All four points must be accepted,
untruncated, forcing-compliant, and above the error floor.
All 1,120 probes and 160 fits qualify, without slope-based filtering or
adaptive windows. The largest slope deviation from $p$ is
$5.79\times10^{-4}$, the largest within-group standard deviation is
$2.21\times10^{-4}$, and the minimum $R^2$ is $0.999999960$.
Figures~\ref{fig:e5-one-step} and~\ref{fig:e5-orders} report the results.
Signed starts share a seed. JS orientations coincide and are not
independent replications.

\begin{figure}[!htbp]
  \centering
  \includegraphics[width=0.817\linewidth]{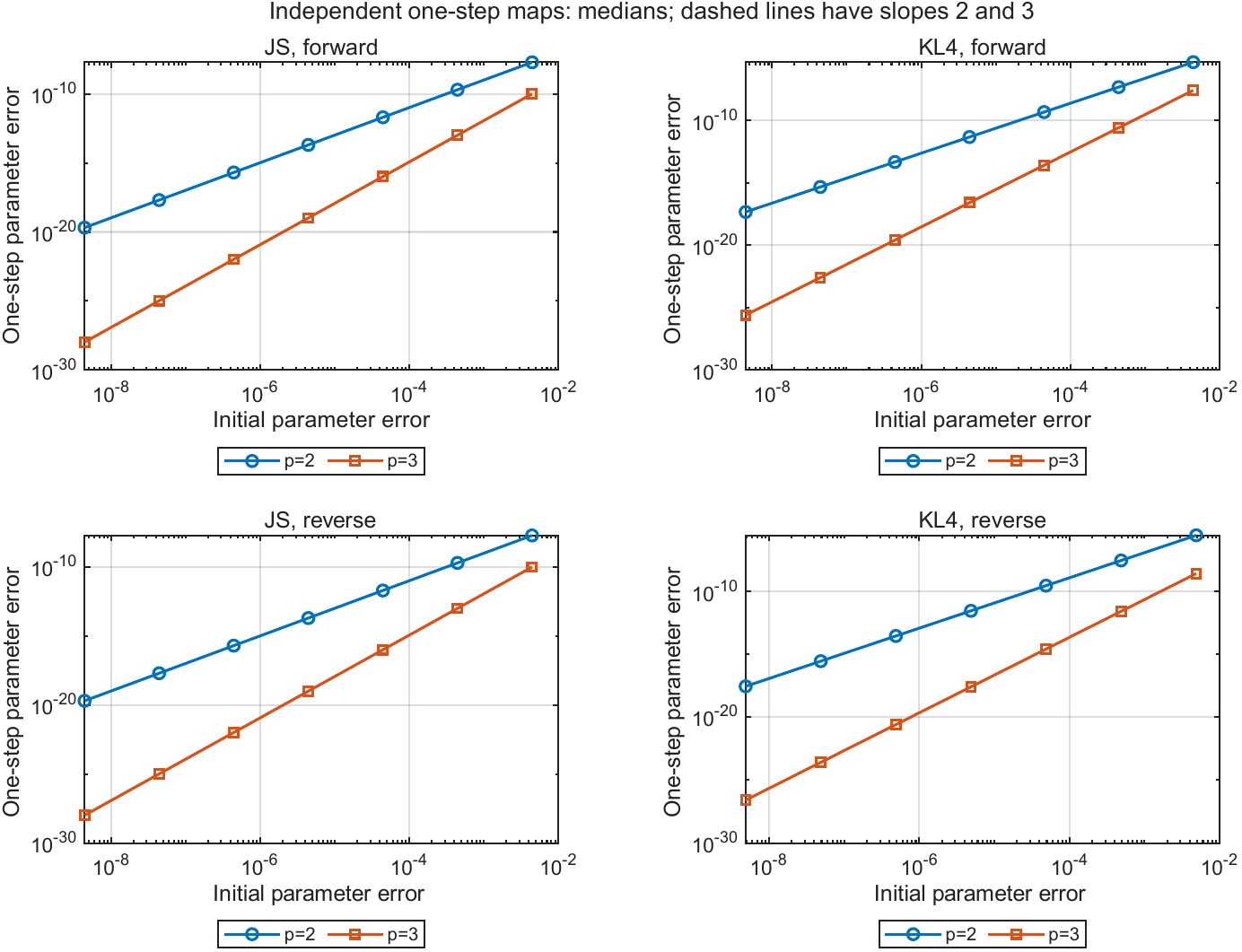}
  \caption{E5 independent one-step errors by generator (columns) and orientation (rows). Coordinates are medians over ten seeds and two signs. Dashed slopes two and three largely overlap the curves. Fits use each seed/sign's prespecified window, not the median curves.}
  \label{fig:e5-one-step}
\end{figure}

\paragraph{Actual iterates.}
Primary trajectories start at $\theta_{\epsilon,0.1}$,
giving $10\times2\times2\times2\times2=160$ runs.
All reach the true-gradient tolerance. The largest final
gradient norm is $3.43\times10^{-31}$.
All 542 primary steps are accepted and untruncated and pass
the Cauchy test. The largest ratio
$\norm{\nabla m_t^{(p)}(s_t)}/\norm{\nabla F(\theta_t)}^p$
is $0.009790<0.01$.
For $e_t=\norm{\theta_t-\theta^\star}$, define
\begin{equation}
 \widehat q_t=
 \frac{\log(e_{t+1}/e_t)}{\log(e_t/e_{t-1})}.
\end{equation}
An eligible observation requires three decreasing errors
above the resolution threshold and two consecutive accepted,
untruncated steps satisfying the local checks.
Table~\ref{tab:e5-iterate-orders} reports the last eligible observation from all 160 runs as finite-trajectory evidence.

\begin{table}[!htbp]
  \centering
  \small
\setlength{\tabcolsep}{5pt}
\renewcommand{\arraystretch}{1.04}
\setlength{\abovecaptionskip}{0pt}
\setlength{\belowcaptionskip}{4pt}
  \caption{E5 actual-iterate orders: mean $\pm$ one standard
  deviation of the last eligible $\widehat q_t$ from each run.
  Each cell contains ten seeds and two paired signs.
  JS orientations coincide mathematically.}
  \label{tab:e5-iterate-orders}
  \begin{tabular}{@{}llcc@{}}
    \toprule
    Generator & Orientation & $p=2$ & $p=3$ \\
    \midrule
    JS & Forward & $1.993\pm0.045$ & $2.996\pm0.012$ \\
    JS & Reverse & $1.993\pm0.045$ & $2.996\pm0.012$ \\
    KL+quartic & Forward & $2.014\pm0.029$ & $2.996\pm0.012$ \\
    KL+quartic & Reverse & $1.995\pm0.015$ & $2.994\pm0.013$ \\
    \bottomrule
  \end{tabular}
\end{table}

\begin{figure}[!htbp]
  \centering
  \includegraphics[width=0.817\linewidth]{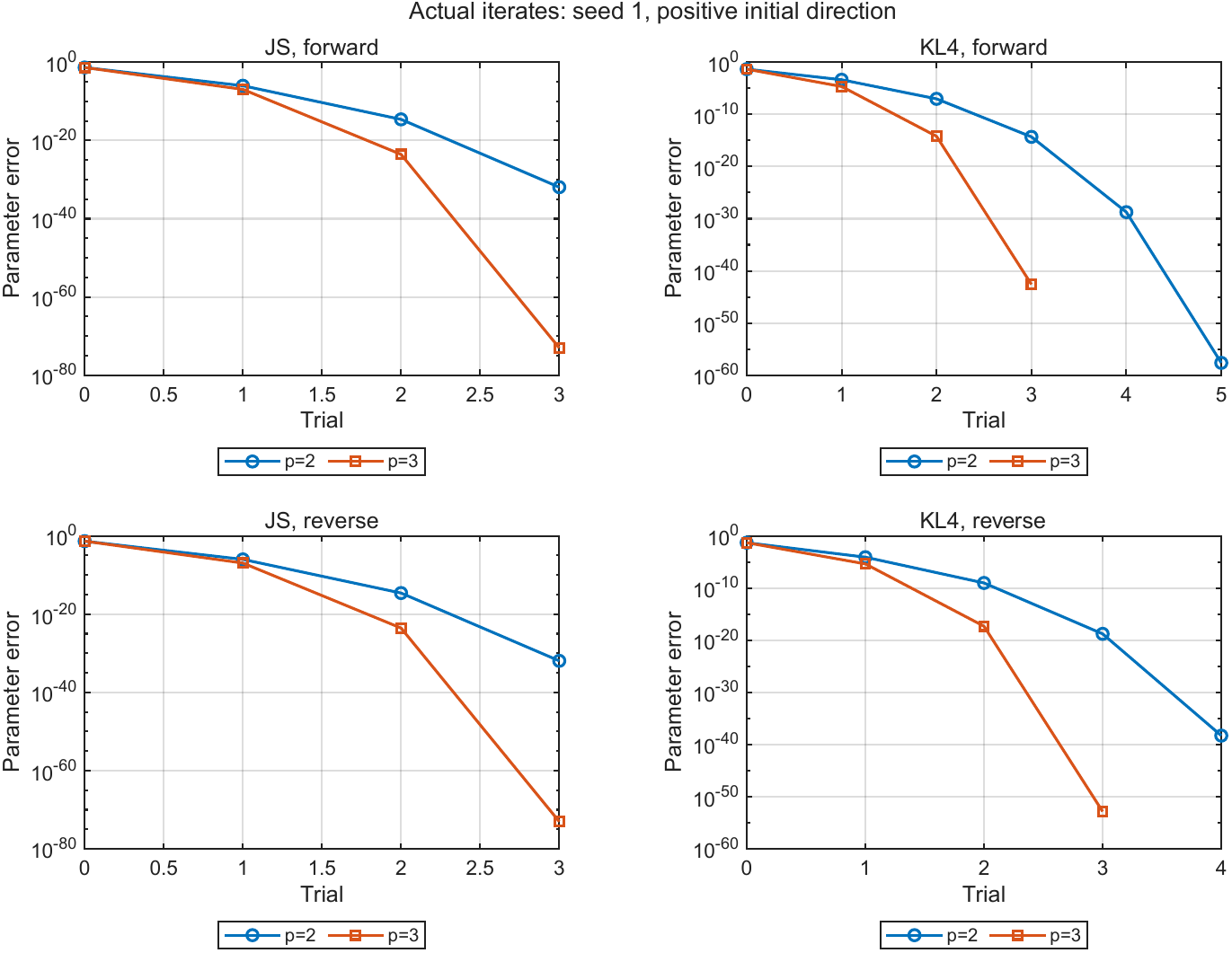}
  \caption{E5 parameter-error trajectories, seed 1, positive initial direction, 120-digit arithmetic. Panels separate generators and orientations. Table~\ref{tab:e5-iterate-orders} summarizes all runs.}
  \label{fig:e5-local-trajectories}
\end{figure}

\paragraph{Condition ablations and singular control.}
Four $p=3$ controls modify one condition on the first forward KL+quartic
problem with positive initial direction (Figure~\ref{fig:e5-ablations}).
A fixed inner tolerance of $10^{-8}$ stalls at true-gradient norm
$6.23\times10^{-11}$. Capping each step at one quarter of the branch
step gives late error ratio $0.75$ and order $1.00$, exhausting the
60-trial budget. Residual Hessians differenced with fixed step
$2\times10^{-4}$ still yield stationarity, but the last observed order
is $2.257$. This short run shows third-jet sensitivity rather than a
finite-difference asymptotic rate. With initial radius $10^{-3}$,
five truncated steps are followed by recovery to observed order $2.982$.

Setting the second column of $B$ to zero creates a flat direction.
From $\theta^\star+(0.02,0.1)^\top$, this control reaches gradient norm
$3.64\times10^{-58}$ while remaining $0.1$ from the selected reference.
Stationarity therefore need not identify a unique parameter without
nondegeneracy. The five controls bring the total to 165 trajectories.

\begin{figure}[!htbp]
  \centering
  \includegraphics[width=0.836\linewidth]{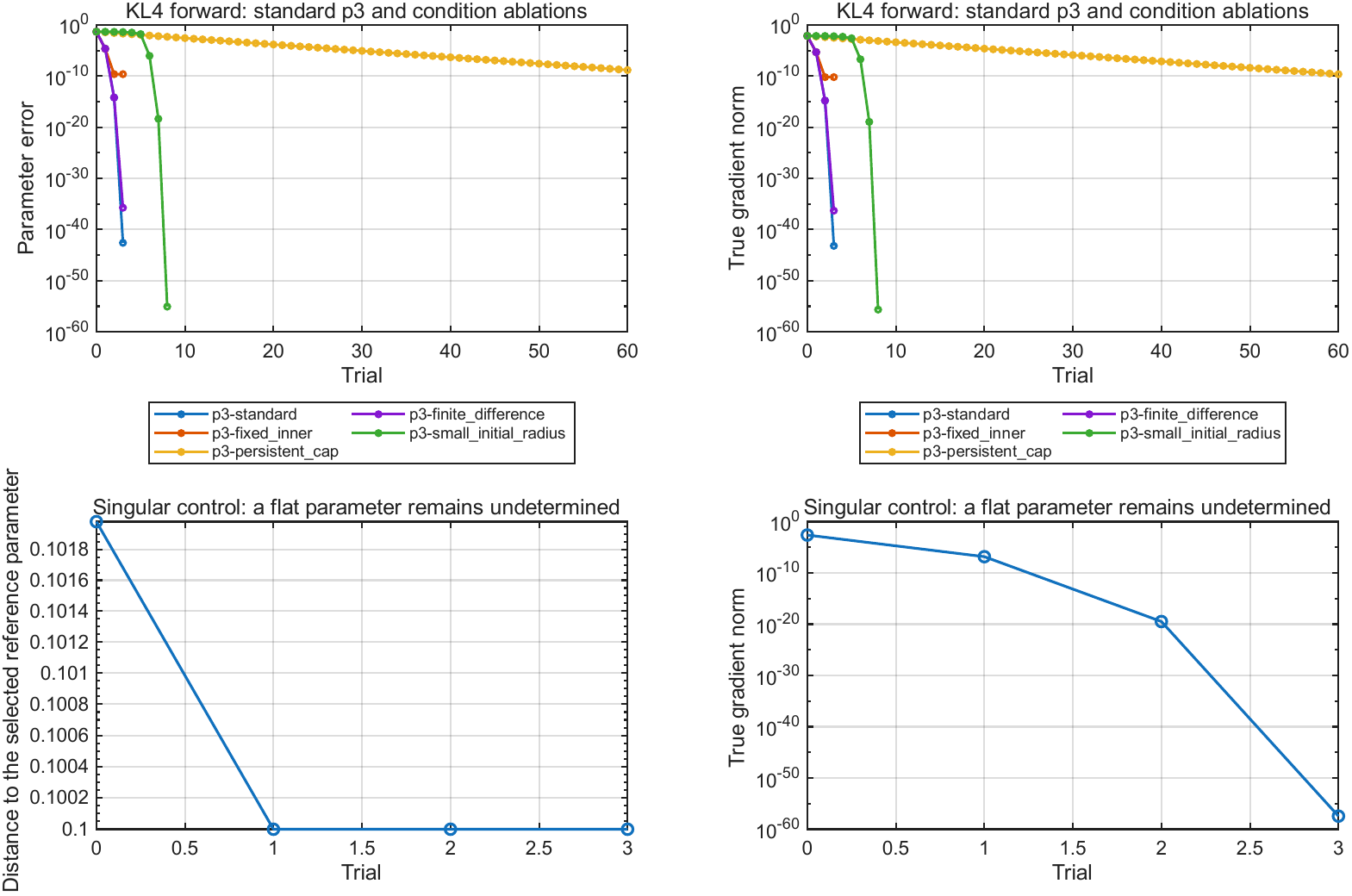}
  \caption{E5 individual condition ablations on the first forward KL+quartic problem. Top: parameter errors and true gradients for standard $p=3$ and four controls. Bottom: a flat parameter direction permits vanishing gradient without convergence to the selected reference. The text reports each control's outcome.}
  \label{fig:e5-ablations}
\end{figure}

\subsection{Preference continuation and sensitivity prediction}
\label{app:e6-details}

\paragraph{Problem construction.}
E6 evaluates the nonsingular preference sensitivity formula
in Section~\ref{app:continuation}.
We use the affine probability chart
$q_\theta=q_0+B\theta>0$, with $(K,d,m)=(24,3,3)$.
For seed $s\in\{1,\ldots,10\}$, the MATLAB random stream is
\texttt{rng(61000+s,'twister')}.
Draw $z\sim\mathcal N(0,0.35^2I_K)$ and a standard Gaussian
matrix $A\in\R^{K\times d}$, and define
\begin{equation}
  q_0=\softmax(z),\qquad
  \widetilde A=A-\mathbf 1(q_0^\top A),\qquad
  B=
  \frac{0.4\,\operatorname{diag}(q_0)\widetilde A}
       {\max_k\norm{\widetilde A_{k,:}}_2}.
  \label{eq:e6-chart}
\end{equation}
Then $\mathbf 1^\top B=0$.
For independent $u_i\sim\mathcal N(0,I_d)$, set
\begin{equation}
  \vartheta_i=1.2\,\frac{u_i}{\norm{u_i}_2},
  \qquad
  P_i=q_0+B\vartheta_i,\qquad i=1,2,3.
\end{equation}
The construction gives $P_i(k)\geq0.52q_0(k)>0$.
The implementation normalizes the target columns to remove
floating-point normalization error.
The same primitive random draws are used across generators
and divergence orientations within each seed.
We use the JS and KL+quartic generators from Section~\ref{sec:generators}. KL4 in the figures denotes
KL+quartic.

\paragraph{Preference loops and initializers.}
The preference path is the closed triangle with vertices
\[
  (0.90,0.05,0.05)^\top,\qquad
  (0.05,0.90,0.05)^\top,\qquad
  (0.05,0.05,0.90)^\top.
\]
Each edge is subdivided into $L\in\{4,8,16\}$ equal
intervals, giving $3L+1\in\{13,25,49\}$ sampled points
including the repeated starting vertex.
Both traversal directions are tested independently of the divergence orientation.

Fix a divergence orientation $\sigma$.
Let $\theta_{j-1}$ denote the preceding corrected solution,
$\delta\lambda_j=\lambda_j-\lambda_{j-1}$, and
\[
  G_{j-1}
  =
  \bigl[
    \nabla F_1^\sigma(\theta_{j-1}),\,
    \nabla F_2^\sigma(\theta_{j-1}),\,
    \nabla F_3^\sigma(\theta_{j-1})
  \bigr],
  \qquad
  H_{j-1}
  =
  \nabla^2F_{\lambda_{j-1}}^\sigma(\theta_{j-1}),
\]
where the component objectives are defined in~\eqref{eq:moo}.

At subsequent preferences, the three initializers are
\begin{equation}
  \begin{aligned}
    \theta_{j,0}^{\mathrm{Cold}}&=0,\\
    \theta_{j,0}^{\mathrm{Warm}}&=\theta_{j-1},\\
    \theta_{j,0}^{\mathrm{Tangent}}
      &=\theta_{j-1}-H_{j-1}^{-1}G_{j-1}\delta\lambda_j.
  \end{aligned}
  \label{eq:e6-initializers}
\end{equation}
Since $\mathbf1^\top\delta\lambda_j=0$, Tangent equals~\eqref{eq:preference-path}. Each method uses its own preceding corrected endpoint, with a common first initialization $\theta=0$. Tangent reuses retained component jets. Independent references never initialize timed paths.

\paragraph{Common corrector and safeguards.}
All three strategies use the same order-two nonlinear
calibrated model, Cauchy decrease safeguard, and true-objective
acceptance test.
The initial trust-region radius is reset to $1$ at every
preference, with maximum radius $8$.
We use $(\eta_1,\eta_2)=(0.1,0.75)$ and Cauchy constant
$\kappa=0.1$.
The radius is halved after rejection and doubled after a
very successful step, subject to the maximum radius.
Each applied ball is restricted to $0.45$ times the
distance to the affine-chart boundary.

The nonlinear model solve allows at most 25 inner Newton
iterations and uses the relative gradient target
\[
  \max\{10^{-12},\,0.01\norm{\nabla F_\lambda^\sigma(\theta)}\}.
\]
The solver also evaluates the model's center Newton candidate, retains the larger predicted reduction, and falls back to a Cauchy step if needed.
Each preference solve permits at most 200 trials and stops
when $\norm{\nabla F_\lambda^\sigma}\leq10^{-8}$.
These settings assess continuation efficiency. They are
not the strict high-order inner forcing protocol of E5.

Tangent falls back to Warm after a failed Hessian definiteness/conditioning check and may shorten predictions to preserve positivity. Neither occurs here. All methods freshly evaluate true component jets at each initializer.

\paragraph{Reference accuracy and validation.}
Independent damped Newton solves from zero target gradient norm
$10^{-13}$. The maximum reference gradient is $9.88\times10^{-14}$,
and the minimum Hessian eigenvalue is $1.96\times10^{-3}$.
Derivative, aggregation, center-calibration, and stable-difference
checks all pass. The maximum normalized finite-difference discrepancy
for the preference derivative is $8.24\times10^{-9}$.

The primary design has
$10\times2\times2\times2\times3\times(13+25+49)=20{,}880$
point solves, indexed by seed, generator, divergence orientation,
traversal, initializer, and grid. Three timing repetitions give 62,640
solves. All pass stationarity and reference-objective checks, with
identical work counts across repetitions. The largest primary final
gradient is $9.96\times10^{-9}$, and the largest absolute reference
objective gap is $9.65\times10^{-15}$.

\paragraph{Independent local prediction maps.}
To isolate initializer accuracy, use
$\lambda_0=(0.2,0.3,0.5)^\top$ and a random unit vector
$v\in\R^3$ satisfying $\mathbf 1^\top v=0$.
For both signs and $h=0.04\,2^{-r}$, $r=0,\ldots,5$,
compute independent reference solutions at
$\lambda_0$ and $\lambda_0+\delta\lambda$,
where $\delta\lambda=\pm hv$.
The two errors are
\begin{align}
  e_{\mathrm W}(h)
  &=
  \norm{
    \theta^\star(\lambda_0)
    -\theta^\star(\lambda_0+\delta\lambda)
  },\\
  e_{\mathrm T}(h)
  &=
  \norm{
    \theta^\star(\lambda_0)
    -H_0^{-1}G_0\delta\lambda
    -\theta^\star(\lambda_0+\delta\lambda)
  }.
\end{align}
Here $H_0$ and $G_0$ are evaluated at the base reference.
On a smooth nonsingular solution branch, the expected
orders are $e_{\mathrm W}(h)=O(h)$ and
$e_{\mathrm T}(h)=O(h^2)$.

Fits exclude errors below the diagnostic resolution floor
\[
  \max\left\{
    10^{-12},
    20\left(
      \frac{\norm{g_{\mathrm{ref},0}}}{\mu_{\mathrm{ref},0}}
      +
      \frac{\norm{g_{\mathrm{ref},h}}}{\mu_{\mathrm{ref},h}}
    \right)
  \right\},
\]
where $\mu_{\mathrm{ref}}$ is the smallest reference
Hessian eigenvalue.
Tangent fits additionally require a feasible prediction.
All 160 signed fits are valid and use all six step sizes.
We average the two signed slopes within each seed before
computing the mean and standard deviation across ten seeds.

\begin{table}[!htbp]
  \centering
  \small
\setlength{\tabcolsep}{5pt}
\renewcommand{\arraystretch}{1.04}
\setlength{\abovecaptionskip}{0pt}
\setlength{\belowcaptionskip}{4pt}
  \caption{
  E6 local prediction orders.
  Entries are mean $\pm$ one standard deviation across
  ten seeds, after averaging the two signed slopes
  within each seed.
  The reference orders are one for Warm and two for Tangent.
  }
  \label{tab:e6-orders}
  \begin{tabular}{@{}llcc@{}}
    \toprule
    Generator & Orientation & Warm & Tangent\\
    \midrule
    JS & Forward
      & $0.999989\pm0.000005$
      & $2.000089\pm0.000283$\\
    JS & Reverse
      & $0.999989\pm0.000005$
      & $2.000089\pm0.000283$\\
    KL+quartic & Forward
      & $1.000184\pm0.000122$
      & $2.000039\pm0.000690$\\
    KL+quartic & Reverse
      & $1.000238\pm0.000082$
      & $2.000341\pm0.000808$\\
    \bottomrule
  \end{tabular}
\end{table}

\paragraph{Timing and work accounting.}
After untimed warm-up, each method/traversal is timed three times in randomized method order. For each seed, generator, divergence orientation, and grid, paired speedups use median whole-path times over repetitions, then the median ratio over traversals, followed by the median and IQR across seeds. Repetitions and traversals are not independent instances. A failed traversal invalidates its seed summary.

Whole-path times include the common first solve, aggregation, initialization, tangent algebra, model construction, inner solves, and acceptance checks. References, diagnostics, tables, plots, and export are excluded. A true-jet pass evaluates all component values, gradients, and Hessians. Aggregation, predictor queries, objective differences, and linear solves are separately counted but included in runtime. Jet counts are not total operation counts.

\begin{figure}[!htbp]
  \centering
  \includegraphics[width=0.836\linewidth]{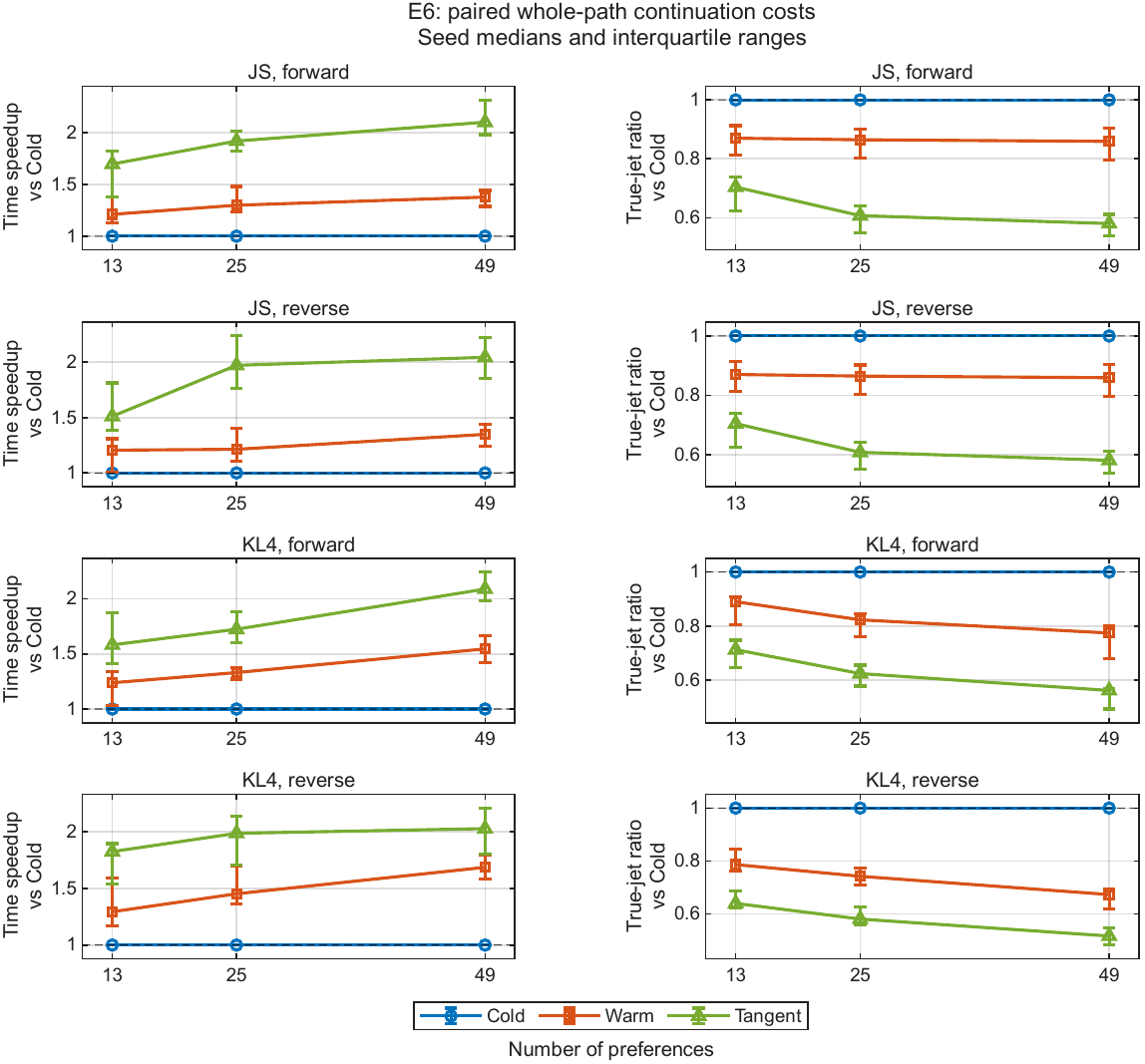}
  \caption{E6 whole-path costs by generator/orientation: runtime speedup against Cold (left) and true-jet pass ratio (right). Paired ratios use median times over three repetitions, then medians over traversals within seeds and medians/IQRs over ten seeds. Initialization and the common first solve are included.}
  \label{fig:e6-cost}
\end{figure}

Table~\ref{tab:e6-cost} in the main text reports the finest-grid costs.

\paragraph{Pathwise initializer quality and objective tradeoffs.}
On the finest grid, Tangent reduces pre-correction gradients after the common first point. Changes at triangle vertices reflect the new preference direction (Figure~\ref{fig:e6-profiles}).

All corrected paths agree numerically with independent references (Figure~\ref{fig:e6-tradeoffs}). This loop samples supported tradeoffs, not the full Pareto surface. Singular-Hessian crossings, branch switching, and real-data performance are untested. JS orientations are algebraically identical, not independent replications.

\begin{figure}[!htbp]
  \centering
  \includegraphics[width=0.750\linewidth]{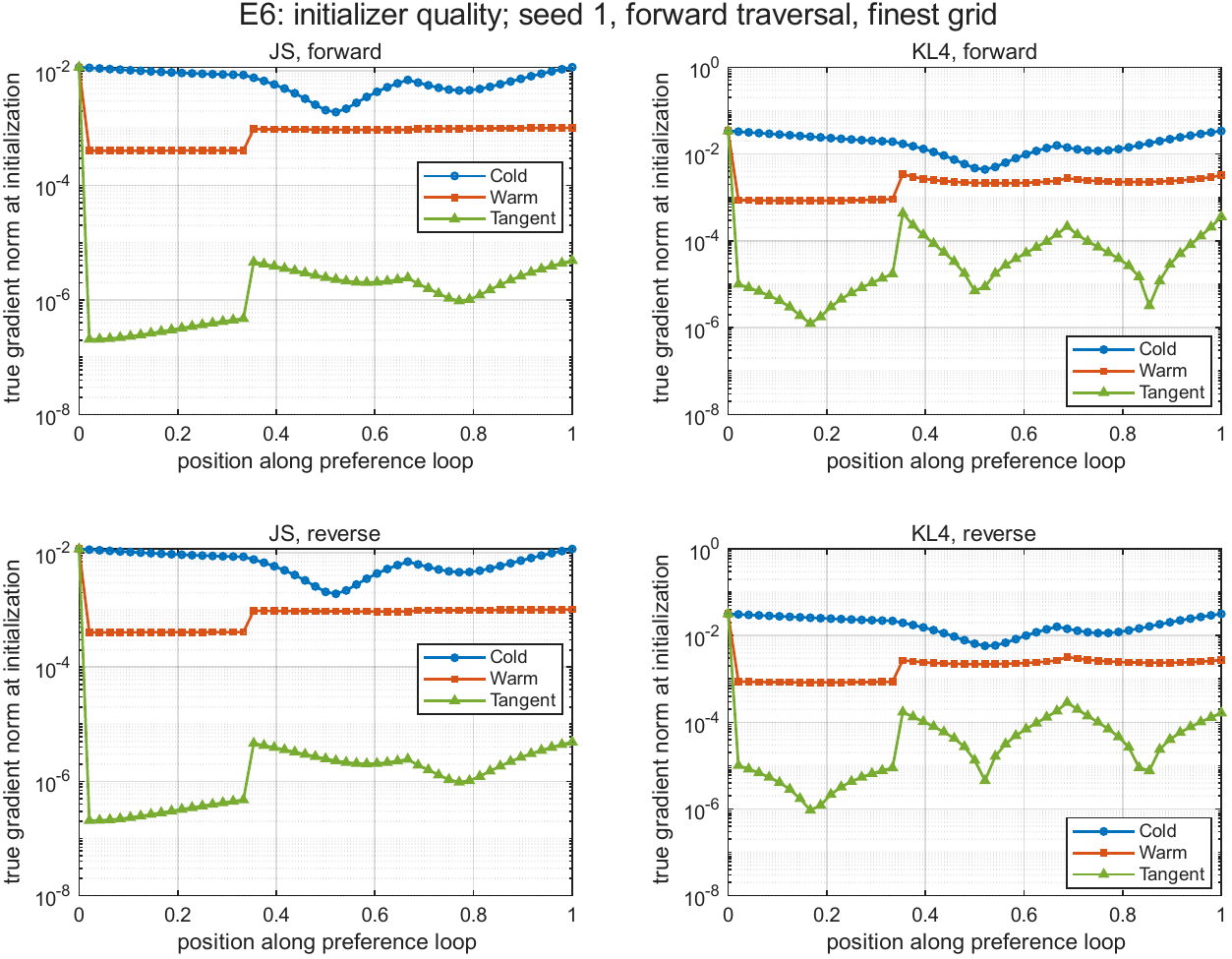}
  \caption{E6 pre-correction gradient norms, seed 1, forward traversal of the 49-point loop, both divergence orientations. All methods share the first cold initialization. The reference line is the corrector tolerance $10^{-8}$.}
  \label{fig:e6-profiles}
\end{figure}

\begin{figure}[!htbp]

  \centering
  \includegraphics[width=0.750\linewidth]{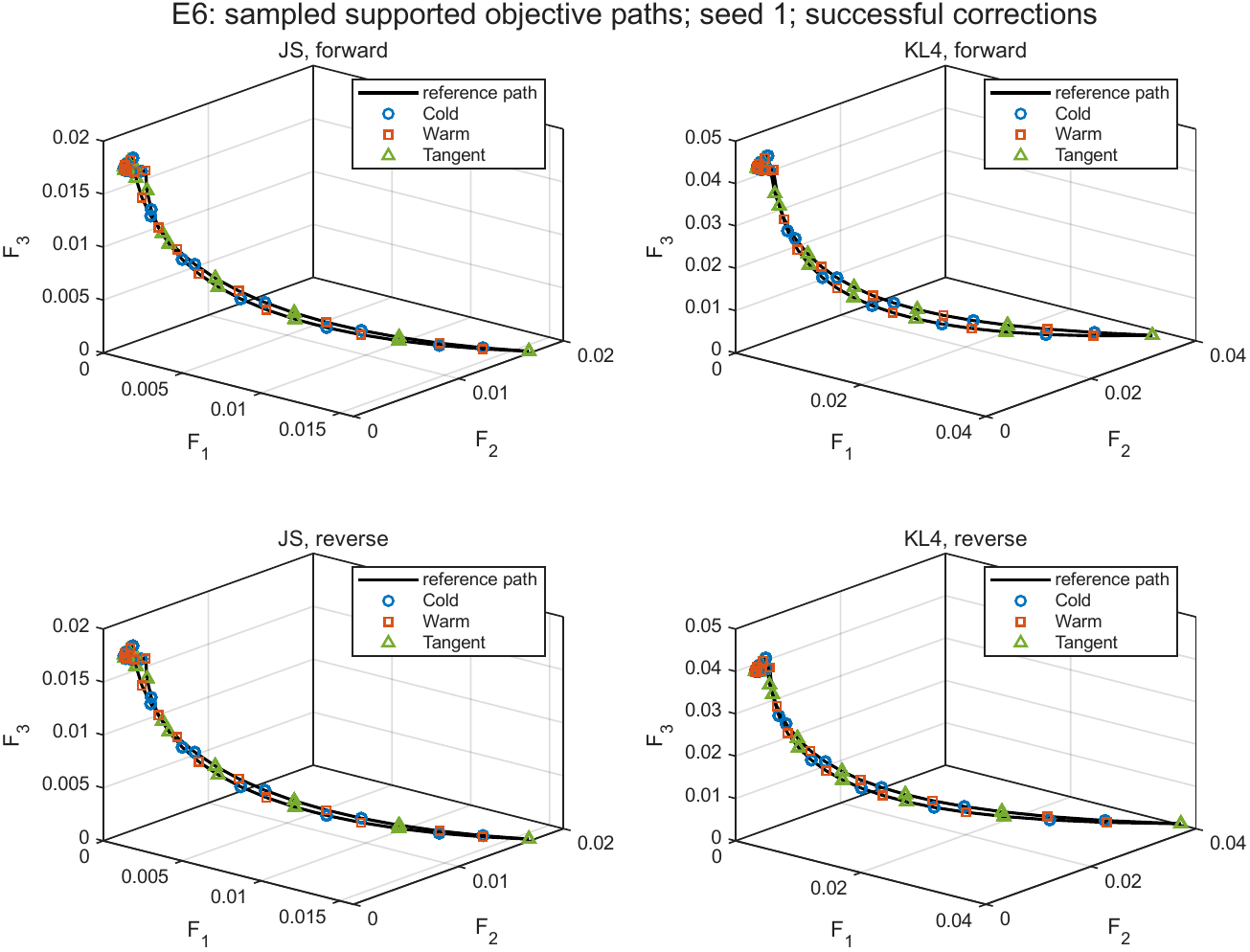}
  \caption{E6 corrected objective paths, seed 1, finest grid, forward traversal. The black line connects independent references. Staggered markers show Cold, Warm, and Tangent solutions. All sampled points enter validation. The loop does not cover the full Pareto surface.}
  \label{fig:e6-tradeoffs}
\end{figure}

\subsection{Template choice after second-order calibration}
\label{app:e7-details}

\paragraph{Compared models.}
E7 varies the nonlinear template while fixing $p=2$.
For $\Acal_\alpha=a_f\sum_i\lambda_iD_\alpha^\sigma(P_i,Q_\theta)$,
the decomposition~\eqref{eq:taylor-remainder} becomes
\begin{equation}
 m_{\alpha,\theta}^{(2)}(s)=T_{2,\theta}F(s)
 +\Acal_\alpha(\theta+s)-T_{2,\theta}\Acal_\alpha(s),
 \label{eq:e7-template-decomposition}
\end{equation}
where $T_{2,\theta}$ is the quadratic Taylor polynomial at $\theta$.
The templates share the true quadratic expansion and differ in their
higher-order terms.

We compare CAL-canonical with $\alpha=\alpha_f$,
CAL-minus with $\alpha=\alpha_f-1$,
CAL-plus with $\alpha=\alpha_f+1$, and
Taylor-p2 with model $T_{2,\theta}F$.
The fifth method uses the value-aligned uncalibrated model
\[
  m_{\theta}^{\mathrm{uncal}}(s)
  =F(\theta)+\Acal_{\alpha_f}(\theta+s)
   -\Acal_{\alpha_f}(\theta).
\]
This control omits residual derivative corrections. All $\alpha$ predictors use exact aggregation with the required scale.

\paragraph{Problem construction.}
E7 uses $Q_\theta=\softmax(b+B\theta)$ with $(K,d,m)=(128,16,32)$.
As in E4, $b_k\sim\mathcal N(0,0.25^2)$ and $B=1.2U$, where $U$
is the thin QR factor of a column-centered Gaussian matrix.
Targets and initialization follow~\eqref{eq:e4-targets}--\eqref{eq:e4-initialization},
with $\tau\in\{0.1,0.5,1,1.5\}$ and Dirichlet$(1,\ldots,1)$ preferences.
Twenty seeds use MATLAB's Twister stream initialized by $71000+\text{seed}$.
Primitive draws are shared across heterogeneities, generators,
orientations, and methods. JS, KLChi, and KL4 in both orientations give
480 cases and 2,400 primary method runs. JS orientations coincide and
are not independent replications.

\paragraph{Common optimization protocol.}
All runs use double precision, a true-gradient tolerance of $10^{-6}$,
and limits of 360 trials and 120 accepted steps. The initial, maximum,
and minimum radii are $0.5$, $2$, and $10^{-12}$.
The thresholds are $(\eta_1,\eta_2)=(0.1,0.75)$.
Rejection halves the radius, while $\rho\geq\eta_2$ doubles it up to
the maximum. The Cauchy-decrease check uses coefficient $0.1$.

All methods use at most 15 regularized Newton iterations with ball
projection, Armijo backtracking, a Cauchy candidate, and model-gradient
threshold $\max\{10^{-10},0.01\norm{\nabla m(0)}\}$.
Acceptance uses the true objective, and success uses its gradient.
An uncalibrated run that stops at surrogate stationarity but fails the
true-gradient test is unsuccessful. Taylor-p2 uses the same inner routine
to isolate template effects, rather than a separate quadratic solver.

Independent Taylor-p2 references start from zero, use tolerance
$10^{-10}$, and have larger budgets. All 480 converge, with maximum
gradient norm $9.71\times10^{-11}$ and minimum observed Hessian
eigenvalue $3.95\times10^{-3}$. They are local numerical references,
not global-optimality certificates.

\paragraph{Fixed-center model diagnostics.}
Each case is probed at three centers:
$\theta=0$, the common initialization, and the
independent reference parameter.
At every center, we use two random unit directions,
both signs, and step norms
\[
  h\in\{0.2,0.1,0.05,0.02,0.01,0.005,0.002,0.001\}.
\]
The normalized errors are
\begin{align}
  e_v(\theta,s)
  &=
  \frac{\abs{F(\theta+s)-m_\theta(s)}}
       {1+\abs{F(\theta)}},
  \notag\\
  e_g(\theta,s)
  &=
  \frac{\norm{\nabla F(\theta+s)-\nabla_s m_\theta(s)}}
       {1+\norm{\nabla F(\theta+s)}}.
  \label{eq:e7-errors}
\end{align}
Stable objective differences are used when evaluating
small value errors.
All 230,400 probe records are finite.

Figures~\ref{fig:e7-gradient} and~\ref{fig:e7-value} use the preselected origin and $h=0.1$: medians over directions/signs within seeds, then medians and IQRs across seeds. IQRs measure dispersion, not confidence. Fixed-step errors do not estimate convergence order.

Diagnostic fits use radii $\{0.1,0.05,0.02,0.01\}$ and require all four errors above recorded roundoff thresholds. Of 57,600 value/gradient fits, 57,353 qualify. The remaining 247 are unavailable in the records. Neither slope agreement nor $R^2$ determines eligibility.

\begin{figure}[!htbp]
  \centering
  \includegraphics[width=0.874\linewidth]{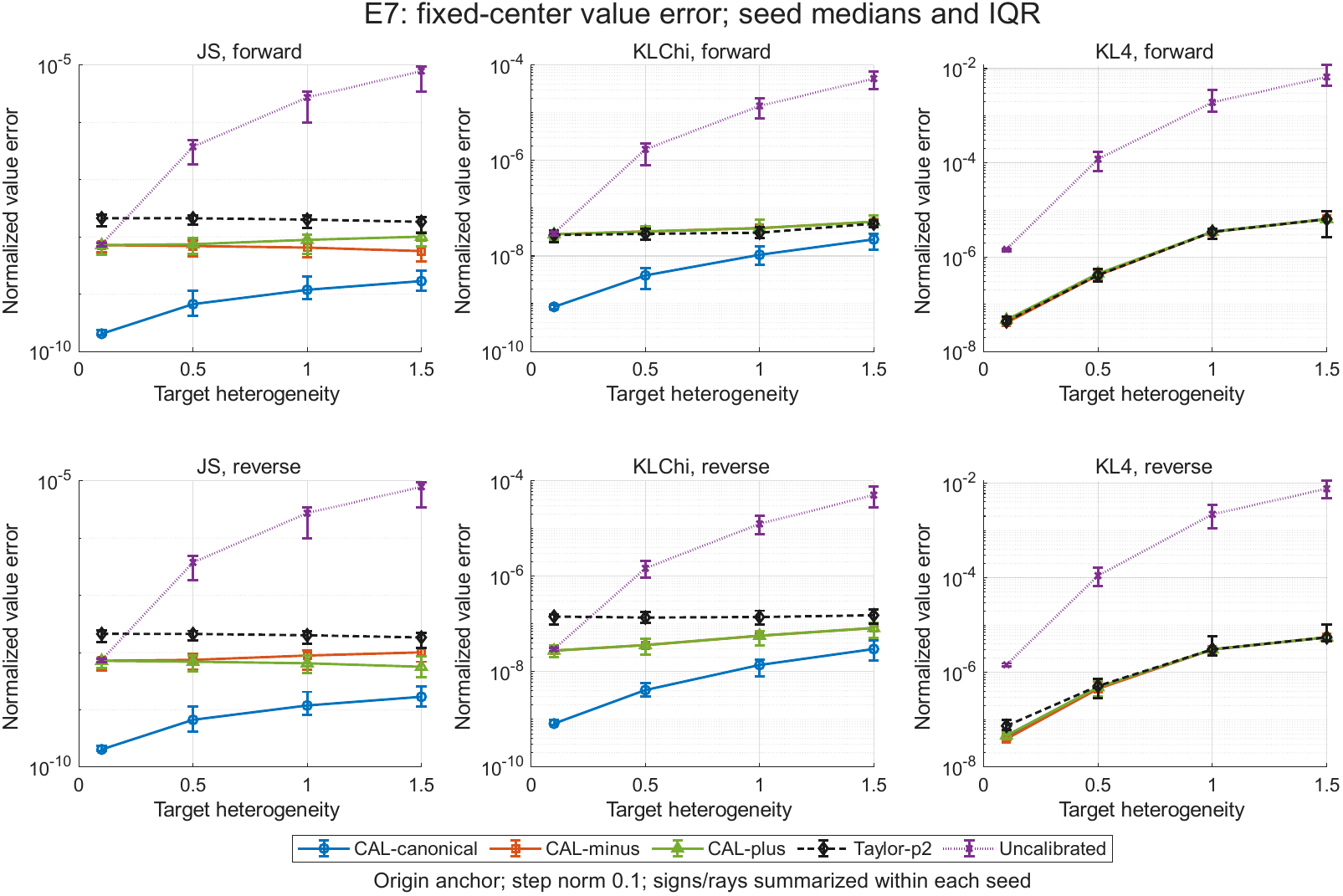}
  \caption{E7 normalized value errors with the probes and aggregation of Figure~\ref{fig:e7-gradient}. Canonical median errors improve on JS and KL+chi-square. Calibrated and Taylor curves nearly coincide for KL+quartic at larger heterogeneities. The horizontal axis varies targets, not step size.}
  \label{fig:e7-value}
\end{figure}

\paragraph{Convergence and timing results.}
Each of CAL-canonical, CAL-minus, CAL-plus, and
Taylor-p2 succeeds in all 480 cases.
Their largest final gradient norm is
$9.68\times10^{-7}$, and their largest absolute
objective discrepancy from the numerical reference
is $7.28\times10^{-11}$.
None of these 1,920 runs fails the Cauchy-decrease check.
Uncalibrated succeeds in 40/160 JS, 40/160 KL+chi-square, and 0/160 KL+quartic cases, only at $\tau=0.1$. Denominators include every attempt.

After untimed warm-up, three repetitions per case/method in randomized order give 7,200 timed runs with identical numerical outcomes and work counts. For method $M$, the paired speedup uses median times:
\[
  S_{\mathrm{time}}(M)
  =
  \frac{\operatorname{median}_r t_{\mathrm{Taylor},r}}
       {\operatorname{median}_r t_{M,r}}.
\]
A timing pair is eligible only if both methods succeed
and their final objectives satisfy
\[
  \abs{F_M-F_{\mathrm{Taylor}}}
  \leq
  10^{-8}\!
  \left(1+\max\{\abs{F_M},\abs{F_{\mathrm{Taylor}}}\}\right).
\]
Eligible ratios are summarized by medians/IQRs across 20 seeds. Repetitions are not independent instances, and groups without pairs are unplotted.

Timings include aggregation, true-jet evaluation,
model construction and calibration, inner solves,
and true-objective acceptance checks.
Independent references and offline model diagnostics
are excluded.
The uncalibrated control retains the common true-jet
evaluation pipeline, so its timing is not a benchmark
of the cheapest possible standalone predictor solver.

CAL-canonical's group-median speedups are $0.470$--$0.816$ (Figure~\ref{fig:e7-runtime}): better model accuracy does not yield lower runtime here. Figure~\ref{fig:e7-convergence} gives representative gradient histories. Success statistics use all seeds.

\begin{figure}[!htbp]
  \centering
  \includegraphics[width=0.864\linewidth]{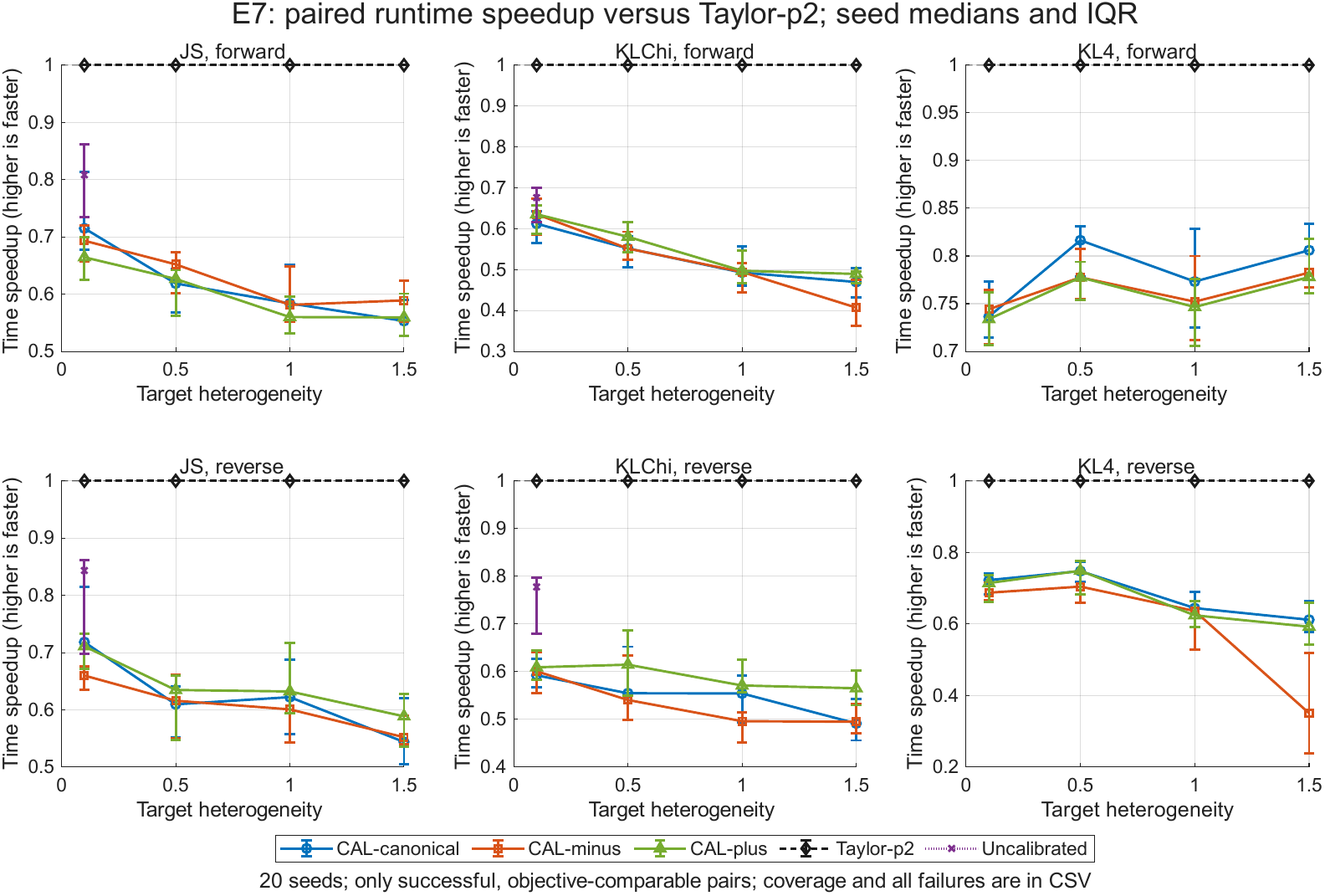}
  \caption{E7 paired speedups against Taylor-p2 (above one is faster), using median times over three repetitions and seed medians/IQRs. Each calibrated group has 20 successful, objective-comparable pairs. Uncalibrated pairs exist only for JS and KL+chi-square at $\tau=0.1$. Missing points indicate no eligible pairs.}
  \label{fig:e7-runtime}
\end{figure}

\begin{figure}[!htbp]
  \centering
  \includegraphics[width=0.874\linewidth]{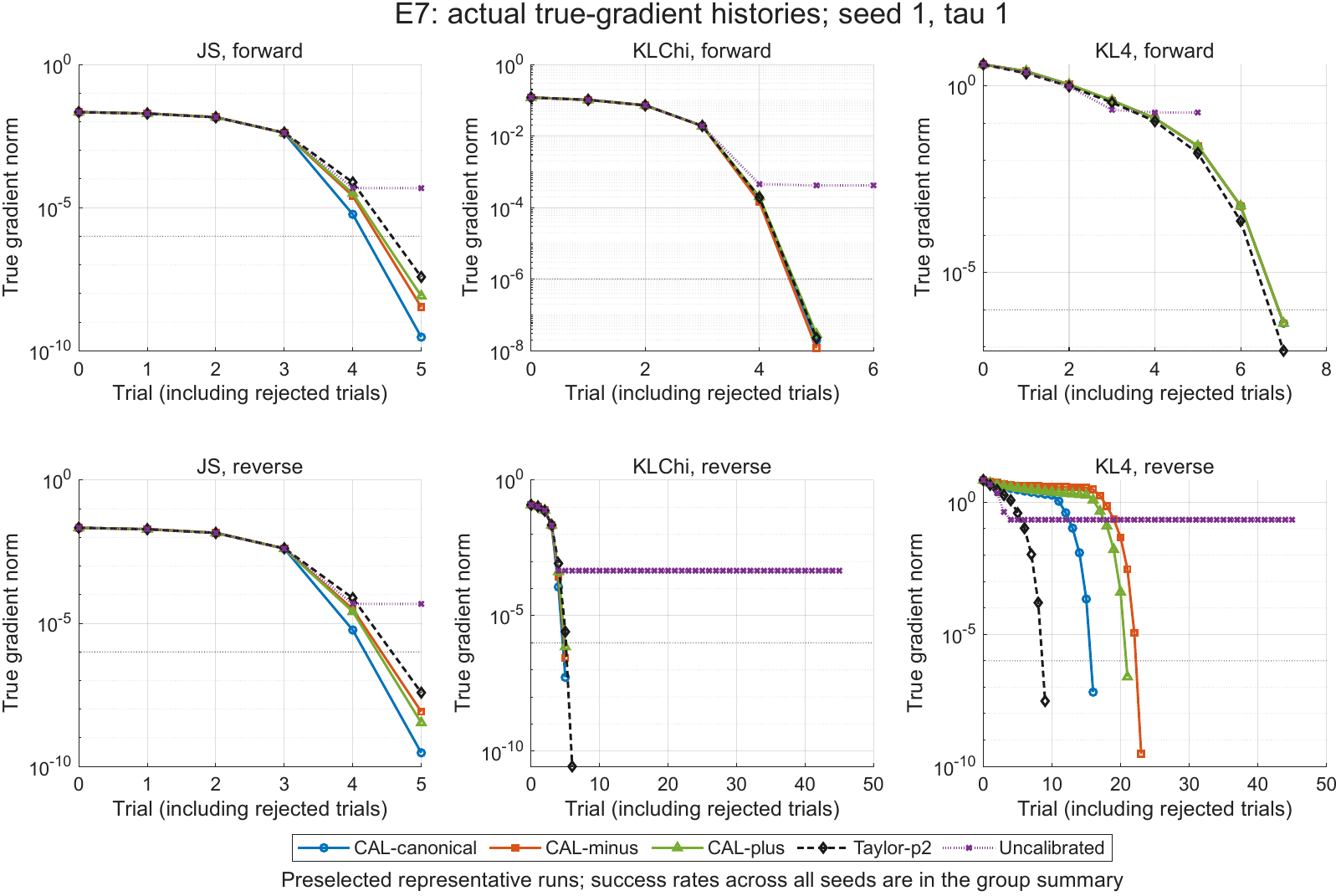}
  \caption{E7 true-gradient histories, preselected seed 1 and $\tau=1$, including rejected trials. These representative histories do not measure asymptotic local order or replace all-seed success statistics.}
  \label{fig:e7-convergence}
\end{figure}

\subsection{E8: learned latent distribution matching and generation}
\label{app:e8-details}

\paragraph{Restricted model family.}
Each target has six Gaussian components, while $Q_\theta$ has two with a
fixed shared covariance. Even the exact forward-KL aggregate may lie
outside this family. For example, preference $(.90,.05,.05)$ gives
$.645R_0+.220R_1+.135R_2$. These are weights of fitted latent laws,
not prescribed image-class frequencies.

\paragraph{Data and representation.}
We use the MNIST digits 0, 1, and 2, downsampled from $28\times28$ to $14\times14$ by averaging each $2\times2$ block and scaled to $[0,1]$. Each of five seeds selects 1,200 training and 180 validation images per class from the original training split, and 500 held-out images per class from the original test split. An autoencoder with widths $196$--$64$--$2$--$64$--$196$ uses tanh hidden activations, a linear bottleneck, and a sigmoid output. We minimize pixelwise binary cross-entropy plus a $2\times10^{-4}$ latent squared-norm penalty with Adam, learning rate $10^{-3}$ and batch size 128, for at most 120 epochs. Validation early stopping has patience 15 and restores the best checkpoint. Training-encoding statistics standardize the latent coordinates. The encoder, decoder, and standardization are then frozen for all optimization methods and preferences within that seed.

\begin{figure}[!htbp]
 \centering
 \includegraphics[width=0.807\linewidth]{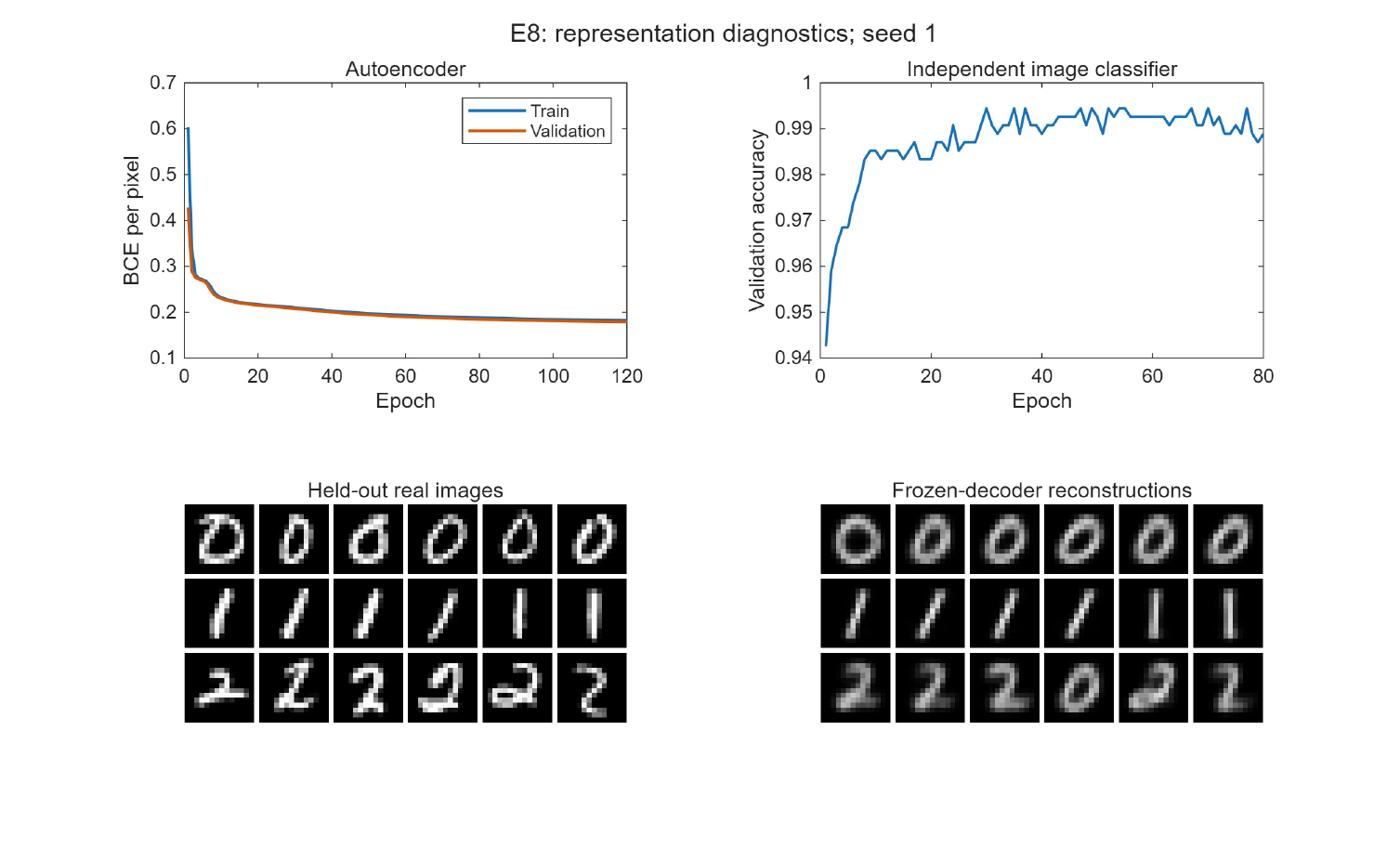}
 \caption{E8 representation diagnostics for seed 1: training and validation reconstruction loss, validation accuracy of an independent image classifier, and held-out inputs with frozen-decoder reconstructions.}
 \label{fig:e8-representation}
\end{figure}

\paragraph{Targets and optimized model.}
For each class, a two-component diagonal-covariance Gaussian mixture $R_k$ is fitted to training encodings with three EM starts, at most 80 iterations, and standard-deviation floor 0.35. The targets are
\begin{equation}
 \begin{aligned}
 P_1&=.7R_0+.2R_1+.1R_2,\\
 P_2&=.1R_0+.7R_1+.2R_2,\\
 P_3&=.2R_0+.1R_1+.7R_2.
 \end{aligned}
 \label{eq:e8-targets}
\end{equation}
 The optimized latent distribution $Q_\theta$ has two Gaussian components with a common fixed isotropic standard deviation equal to the larger of 0.5 and the median fitted teacher standard deviation. Its five free parameters are one mixture-weight logit and four mean coordinates. Means use a bounded tanh parameterization. All methods use the same initial parameter vector. There is no continuation warm start between preferences.

The ten preferences are the order-three simplex lattice transformed by $\lambda=.05\mathbf1+.85r$, where $r$ ranges over nonnegative triples with sum one and coordinates in $\{0,1/3,2/3,1\}$. This includes the uniform preference and the three $(.90,.05,.05)$ permutations. For JS and forward KL, we minimize $\sum_i\lambda_iD_f(P_i\|Q_\theta)$. With $a_f=1/2$ and $\alpha_f=0$, the normalized-grid JS template can equivalently be evaluated as
\begin{equation}
 \Acal(\theta)=2\left(1-\sum_x H_\lambda(x)\sqrt{q_\theta(x)}\right),
 \qquad H_\lambda(x)=\sum_i\lambda_i\sqrt{p_i(x)},
 \label{eq:e8-js-template}
\end{equation}
where $p_i$ and $q_\theta$ denote probability masses on the quadrature grid. This expression retains the scale and constant in Theorem~\ref{thm:aggregation}.

\paragraph{Solvers, budgets, and timing.}
CAL-p1 and CAL-p2 use residual derivatives through orders one and two, respectively. The p1 implementation can still use the predictor Hessian inside its subproblem. Direct-LBFGS uses memory eight and Armijo backtracking. Direct-TR uses the true quadratic model and an eigendecomposition-based trust-region step. The uncalibrated control aligns the predictor value but omits residual derivative corrections. All methods use the same true-objective stopping tolerance $10^{-6}$, a budget of 220 trial iterations and 100 accepted steps, initial radius 0.5, maximum radius 2, and minimum radius $10^{-11}$. CAL inner solves use at most 12 iterations, relative gradient tolerance 0.01, and absolute floor $10^{-9}$. The design comprises 500 primary runs (five seeds, ten preferences, two generators, five methods), with three timing repetitions in rotated method order. Four independent reference starts per seed/preference/generator yield 400 reference solves, excluded from optimization timing and never used for initialization. References are local best-found solutions, not certified global optima.

\paragraph{Quadrature and endpoint audits.}
Optimization uses a fixed $97\times97$ midpoint grid and normalized probability masses. The integration box is chosen using a $10^{-10}$ Gaussian tail-mass threshold. Every endpoint is re-evaluated on independently refined $193\times193$ and $385\times385$ grids. Audit thresholds are $5\times10^{-6}$ for the refined gradient norm, $10^{-7}$ for objective discrepancies, $5\times10^{-6}$ for gradient discrepancies, and $10^{-7}$ for discrete-mass normalization error. The maximum observed objective and gradient discrepancies are $1.85594\times10^{-8}$ and $1.13101\times10^{-7}$. Maximum normalization error is $8.88\times10^{-16}$. Grid refinement checks numerical consistency, not rigorous continuous-objective error bounds. Mass normalization alone does not certify quadrature accuracy.

\paragraph{Generation and independent image diagnostics.}
We sample $z\sim Q_\theta$ and display the frozen decoder's mean image, without adding Bernoulli pixel noise. Image-space diagnostics do not enter the optimized latent divergence. An independent linear softmax classifier is trained on training images for at most 80 epochs with learning rate 0.005. Its minimum held-out accuracy across seeds is $98.27\%$. The minimum classification accuracy of reconstructed held-out images is $97.2\%$. Unbiased image MMD$^2$ is computed in a 24-dimensional PCA representation learned from training images, using a Gaussian kernel with training-data bandwidth. Each image diagnostic uses 256 latent draws. Class proportions and MMD$^2$ are summarized across seeds separately for each categorical preference ID. Failed endpoints remain in these descriptive diagnostics and are identified in the saved samples.

\paragraph{Results and interpretation.}
All 200 calibrated runs and all 200 direct runs attain audited stationarity. The uncalibrated predictor succeeds on all 50 KL cases but no JS case. Comparable-pair coverage against Direct-TR is 43/50 for JS CAL-p1 and 46/50 for JS CAL-p2, and 50/50 for both KL variants. Median paired speedups against Direct-TR are 0.108 and 0.131 for JS CAL-p1/p2, and 0.178 and 0.155 for KL: the three-target application establishes reliable matching and preference-controlled generation, not acceleration. Signed reference gaps retain both improvements and worse local solutions.

\begin{figure}[!htbp]
 \centering
 \small
 \setlength{\tabcolsep}{4pt}
\begin{tabular}{@{}ccccc@{}}
CAL-p1 & CAL-p2 & Direct-LBFGS & Direct-TR & Uncalibrated\\
 & & & & (not converged)\\[3pt]
\multicolumn{5}{c}{Preference $\lambda=(.33,.33,.33)$}\\[2pt]
\includegraphics[width=.160\linewidth,viewport=145 439 188 481,clip]{\EeightSamplesPDF}&
\includegraphics[width=.160\linewidth,viewport=145 344 188 386,clip]{\EeightSamplesPDF}&
\includegraphics[width=.160\linewidth,viewport=145 249 188 291,clip]{\EeightSamplesPDF}&
\includegraphics[width=.160\linewidth,viewport=145 154 188 196,clip]{\EeightSamplesPDF}&
\includegraphics[width=.160\linewidth,viewport=145 59 188 101,clip]{\EeightSamplesPDF}\\[4pt]
\multicolumn{5}{c}{Preference $\lambda=(.90,.05,.05)$}\\[2pt]
\includegraphics[width=.160\linewidth,viewport=313 439 357 481,clip]{\EeightSamplesPDF}&
\includegraphics[width=.160\linewidth,viewport=313 344 357 386,clip]{\EeightSamplesPDF}&
\includegraphics[width=.160\linewidth,viewport=313 249 357 291,clip]{\EeightSamplesPDF}&
\includegraphics[width=.160\linewidth,viewport=313 154 357 196,clip]{\EeightSamplesPDF}&
\includegraphics[width=.160\linewidth,viewport=313 59 357 101,clip]{\EeightSamplesPDF}\\[4pt]
\multicolumn{5}{c}{Preference $\lambda=(.05,.90,.05)$}\\[2pt]
\includegraphics[width=.160\linewidth,viewport=482 439 526 481,clip]{\EeightSamplesPDF}&
\includegraphics[width=.160\linewidth,viewport=482 344 526 386,clip]{\EeightSamplesPDF}&
\includegraphics[width=.160\linewidth,viewport=482 249 526 291,clip]{\EeightSamplesPDF}&
\includegraphics[width=.160\linewidth,viewport=482 154 526 196,clip]{\EeightSamplesPDF}&
\includegraphics[width=.160\linewidth,viewport=482 59 526 101,clip]{\EeightSamplesPDF}\\[4pt]
\multicolumn{5}{c}{Preference $\lambda=(.05,.05,.90)$}\\[2pt]
\includegraphics[width=.160\linewidth,viewport=650 439 694 481,clip]{\EeightSamplesPDF}&
\includegraphics[width=.160\linewidth,viewport=650 344 694 386,clip]{\EeightSamplesPDF}&
\includegraphics[width=.160\linewidth,viewport=650 249 694 291,clip]{\EeightSamplesPDF}&
\includegraphics[width=.160\linewidth,viewport=650 154 694 196,clip]{\EeightSamplesPDF}&
\includegraphics[width=.160\linewidth,viewport=650 59 694 101,clip]{\EeightSamplesPDF}\\[4pt]
\end{tabular}
 \caption{E8 saved decoded samples, four JS preferences, seed 1. Columns compare methods. All displayed uncalibrated endpoints failed to converge. Mosaics retain the saved samples, cropping only surrounding whitespace. CAL-p2 also appears in Figure~\ref{fig:e8-main}.}
 \label{fig:e8-all-samples}
\end{figure}

\begin{figure}[!htbp]
 \centering
 \includegraphics[width=0.779\linewidth]{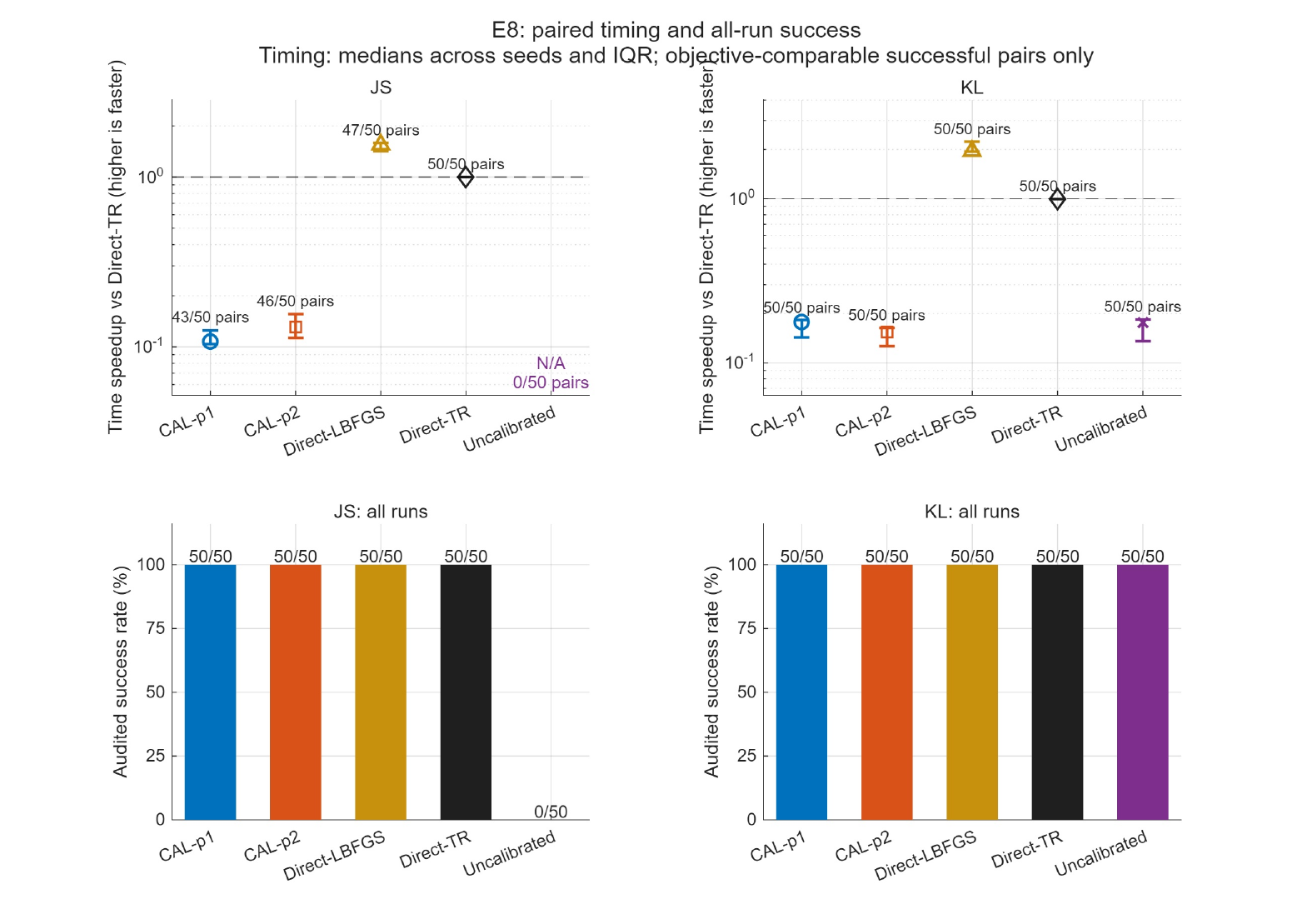}
 \caption{E8 paired runtime and all-run audited success. Timing uses only successful, objective-comparable pairs and reports pair counts. Success uses all attempted cases. N/A denotes absence of eligible timing pairs.}
 \label{fig:e8-cost}
\end{figure}
\begin{figure}[!htbp]
 \centering
 \includegraphics[width=0.817\linewidth]{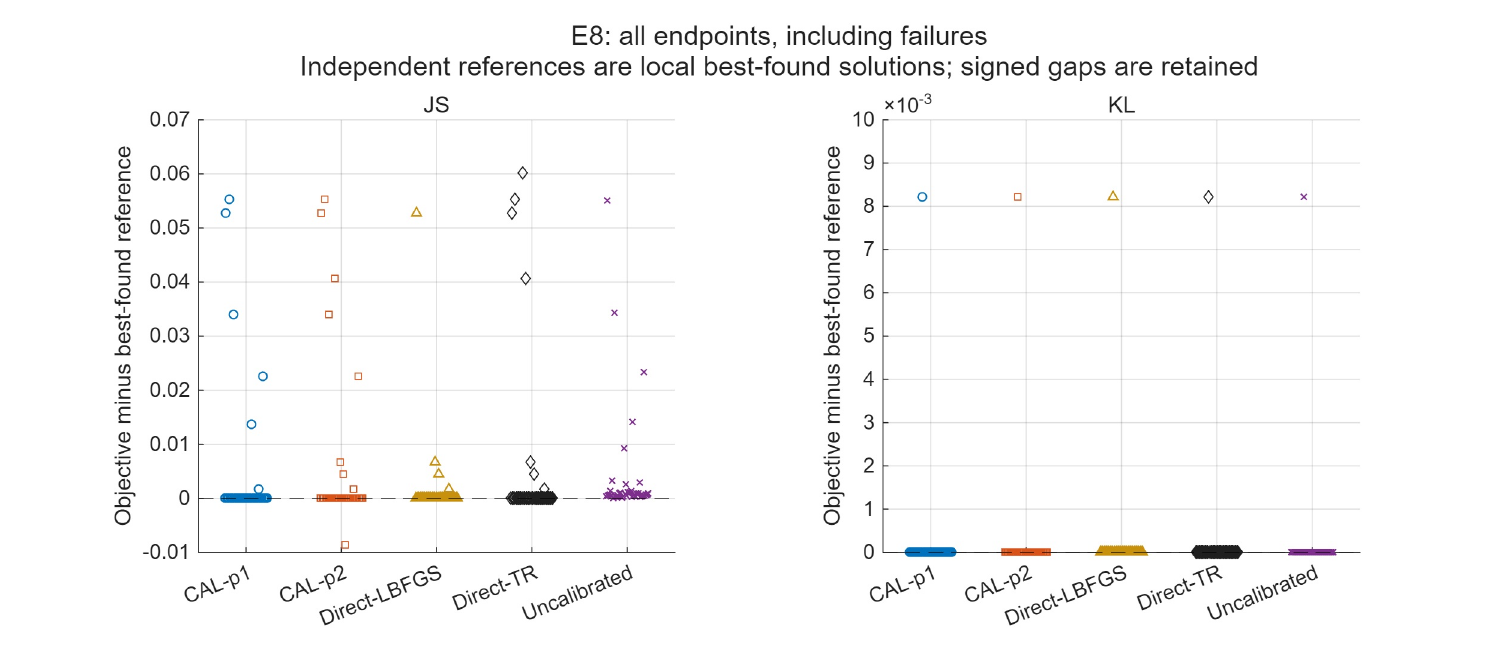}
 \caption{E8 signed objective gaps to independent local best-found references, including failed endpoints. A negative value means that the tested run improved on its reference.}
 \label{fig:e8-gaps}
\end{figure}

\subsection{E9: many-target cost crossover and implementation controls}
\label{app:e9-details}

\paragraph{Nested target banks.}
E9 reuses each E8 seed's three fitted latent targets and common initial model, then creates a bank of 300 six-component Gaussian mixtures. The first three are the original targets. Additional targets cycle through the three families, perturb component means by Gaussian noise of scale 0.15 times the student standard deviation, multiply component standard deviations by $\exp(.075\tanh Z)$, and multiply mixture weights by $\exp(.15\tanh Z)$ followed by renormalization, with independent standard-normal $Z$. The bank seed is $91000+$ the E8 seed. Target counts $3,10,30,100,300$ use nested prefixes of the same bank. The common integration box is fixed using the full 300-target bank, extending to nine component standard deviations beyond the extreme means when necessary. These are fitted and perturbed latent densities, not 300 separately trained neural networks.

The correlated bank isolates target-count effects within three families. Its speedups do not establish performance for independent, highly heterogeneous teachers. E7 tests heterogeneity effects on model accuracy, not this timing comparison.

For family $g(j)=1+\operatorname{mod}(j-1,3)$ and its count $n_g$ in the chosen prefix, target weights are $\lambda_j=\pi_{g(j)}/n_{g(j)}$. The five family preferences are uniform, the three permutations of $(.8,.1,.1)$, and $(.5,.35,.15)$. This keeps family-level preferences comparable as $m$ grows. The student remains the same five-parameter, two-component mixture used in E8. Both forward JS and forward KL are tested.

\paragraph{Shared implementations and cost boundaries.}
All solvers share cached teacher masses, log masses, and entropies on a $97\times97$ grid, block-vectorized evaluation with block size 32, and a per-solve cache for the current student density. A cached higher-order evaluation can satisfy a lower-order query. Direct KL objectives use exact aggregation, just as the KL template does.  Experiments run in double-precision MATLAB R2025b Update 5 on 64-bit Windows. Comparisons are paired on the same execution environment.

Total solve time includes preference-dependent setup, predictor aggregation, calibration and refreshes, inner solves, true-objective trial tests, and stopping checks. Shared bank/grid preparation, representation training, references, audits, profiling, and microbenchmarks are excluded equally. E9 measures optimization of fitted densities.

\paragraph{Baseline scope.}
CAL-p2 and Direct-TR use true values, gradients, and Hessians.
L-BFGS uses values and gradients. Adaptive weighting or constrained
preferences change the problem (Section~\ref{sec:related}).
Full-exponential-family Pareto constructions \citep{zhang2025} do not
directly solve restricted-mixture JS matching. The KL baselines already
aggregate exactly. Target-subsampled solvers are untested here.

The derivative-demand control (Stage A) is described in
Section~\ref{sec:e9} and Figure~\ref{fig:e9-implementation}.

\paragraph{Stage B: matched-order microbenchmarks.}
We separately time the true objective and aggregated predictor at three parameter points, with three repetitions. F returns a value, FG a value and gradient, and FGH a value, gradient, and Hessian. A timing batch targets at least 0.04 seconds, with a 500-call cap. Evaluation batches reach the requested duration. Aggregation preparation is timed separately, and is already charged to full solves. At $m=300$, JS true/predictor cost ratios are 188.788, 68.120, and 8.786 for F, FG, and FGH. KL ratios remain approximately one. See Figures~\ref{fig:e9-evaluation-cost} and~\ref{fig:e9-cost-ratio}.

\begin{figure}[!htbp]
 \centering
 \includegraphics[width=0.817\linewidth]{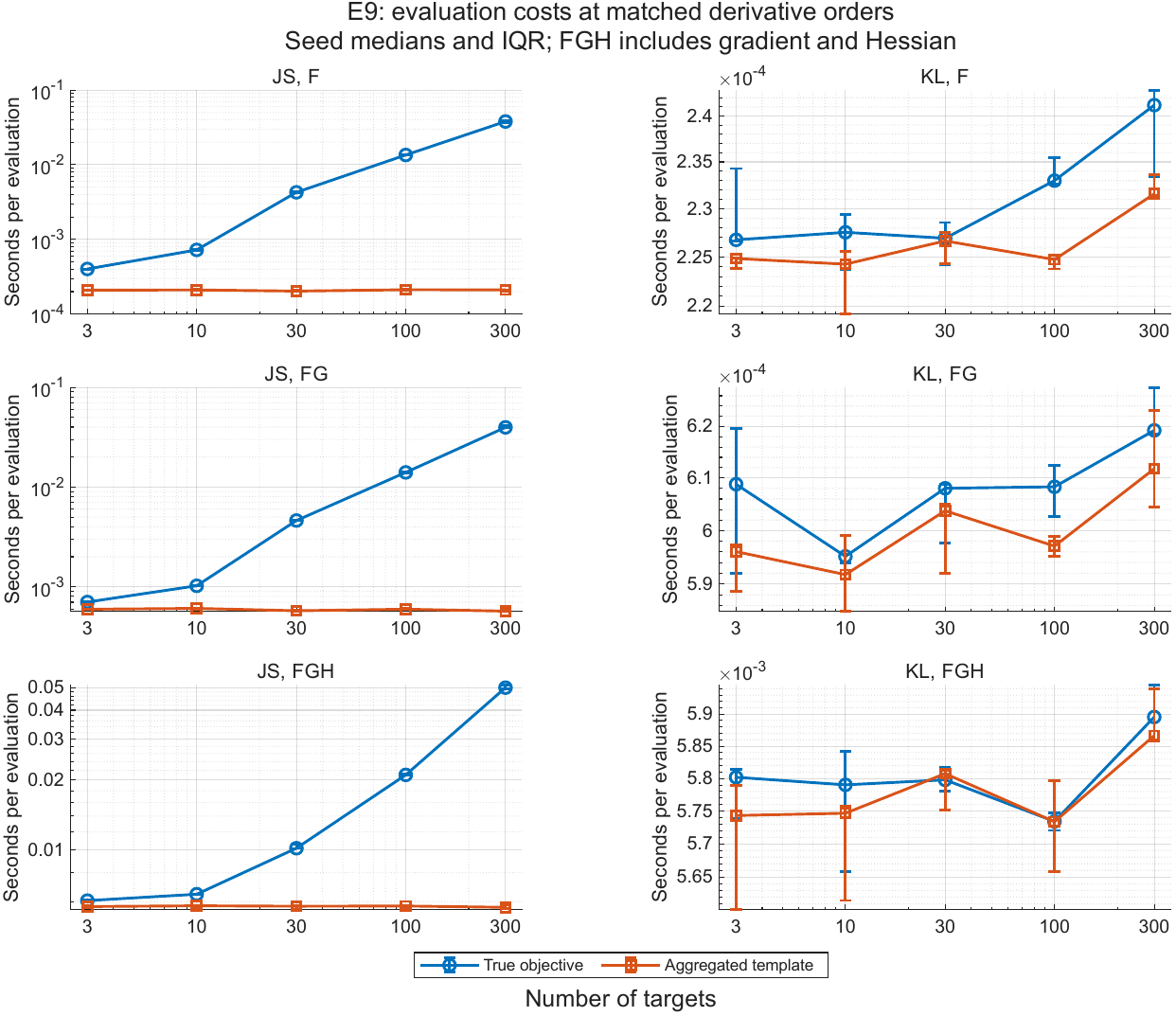}
 \caption{E9 absolute evaluation costs at matched derivative orders, summarized by seed medians and IQRs. FGH includes both gradients and Hessians. Predictor preprocessing is accounted for separately and included in total solves.}
 \label{fig:e9-evaluation-cost}
\end{figure}

\paragraph{Stage C: optimization, references, and audits.}
Five seeds, five target counts, five preferences, two generators, and four methods yield 1,000 primary runs and 3,000 timed repetitions. Before measured repeats, each method performs a warm-up solve limited to two trial iterations. The measured method order is rotated. The methods are CAL-p1, CAL-p2, Direct-LBFGS, and Direct-TR. CAL-p1 refreshes true FG information, CAL-p2 and Direct-TR use true FGH information, and L-BFGS uses true FG information. The CAL inner solver may request predictor Hessians for either calibration order. Independent local references use three Direct-TR starts and true-gradient tolerance $10^{-7}$, without initializing any compared run.

Primary solves use the E8 limits of 220 trials and 100 accepted steps, radii $(\Delta_0,\Delta_{\max},\Delta_{\min})=(0.5,2,10^{-11})$, and at most 12 inner iterations with relative tolerance 0.01 and absolute floor $10^{-9}$. They stop at true-gradient norm $10^{-6}$. Endpoint audits require at most $2\times10^{-6}$. All 1,000 pass, with maximum audited norm $9.948\times10^{-7}$. All 140 formula and derivative checks pass. Forty selected endpoints (seed 1, first preference, all target counts, both generators, all methods) are independently re-evaluated on a $193\times193$ grid. All satisfy objective-discrepancy tolerance $10^{-7}$ and gradient-discrepancy tolerance $5\times10^{-6}$. Saved validation checks also cover case uniqueness, repetition counts, independent references, and replay consistency.

\begin{figure}[!htbp]
 \centering
 \includegraphics[width=0.817\linewidth]{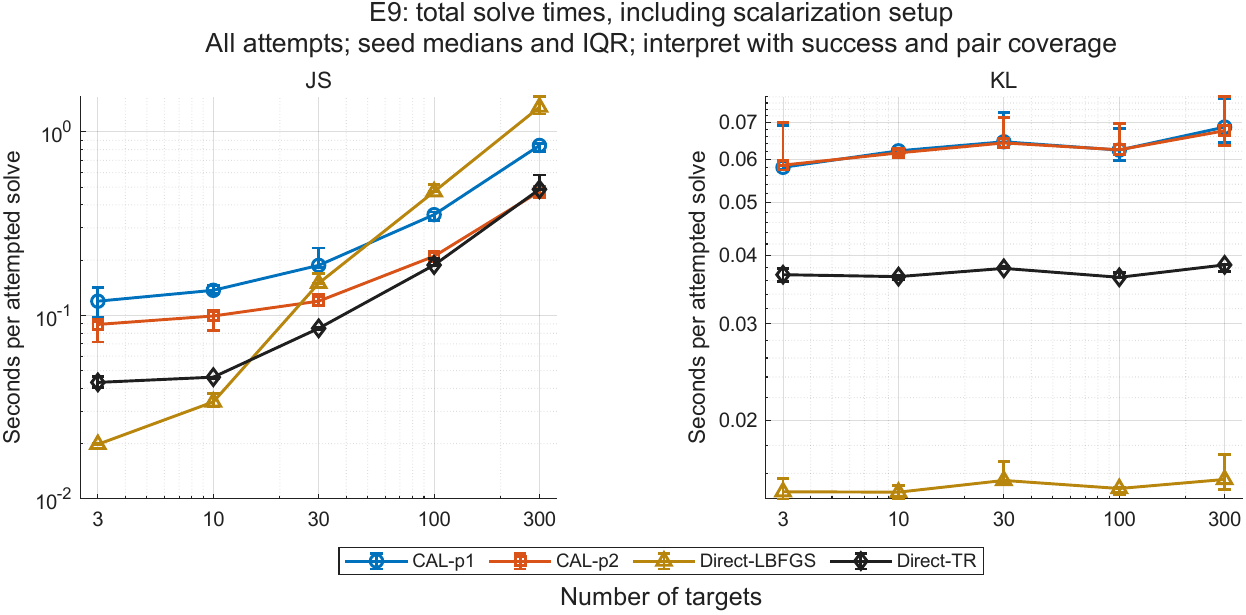}
 \caption{E9 total solve times including preference-dependent scalarization setup, over all attempts. Because different methods can reach different local solutions, these marginal medians supplement rather than replace paired speedups.}
 \label{fig:e9-runtime}
\end{figure}

\paragraph{Paired summaries and local-solution coverage.}
Let $t_{s,k}^{M}$ be the median of three unprofiled times for method $M$, seed $s$, and preference $k$ at a fixed generator and target count. For a baseline $B$, a pair is eligible if both runs pass the endpoint audit and
\begin{equation}
 \abs{F_\lambda(\theta_{s,k}^M)-F_\lambda(\theta_{s,k}^B)}\leq2\times10^{-6}.
 \label{eq:e9-pairing}
\end{equation}
Take median paired ratios $t_{s,k}^{B}/t_{s,k}^{M}$ over eligible preferences within each seed, then the median/IQR across five seeds. IQRs measure dispersion, not confidence. Report pair coverage separately from stationarity. Retain every endpoint in raw tables and signed-gap plots.

At $m=300$, JS CAL-p2 has 23/25 eligible pairs against L-BFGS and 24/25 against Direct-TR. Its median speedups are 3.1498 (IQR 3.0587--3.1510) and 1.1026 (IQR 1.0968--1.1513). Against Direct-TR, 17/24 individual pairs and four of five seed medians are faster. The two excluded L-BFGS pairs have higher CAL-p2 objectives, with gaps approximately 0.0262 and 0.0184. They are not evidence of improved solution quality. Direct methods remain faster in the KL exact-aggregation control. These controls delimit the cost regime in which JS aggregation yields a net benefit.

\begin{figure}[!htbp]
 \centering
 \includegraphics[width=0.817\linewidth]{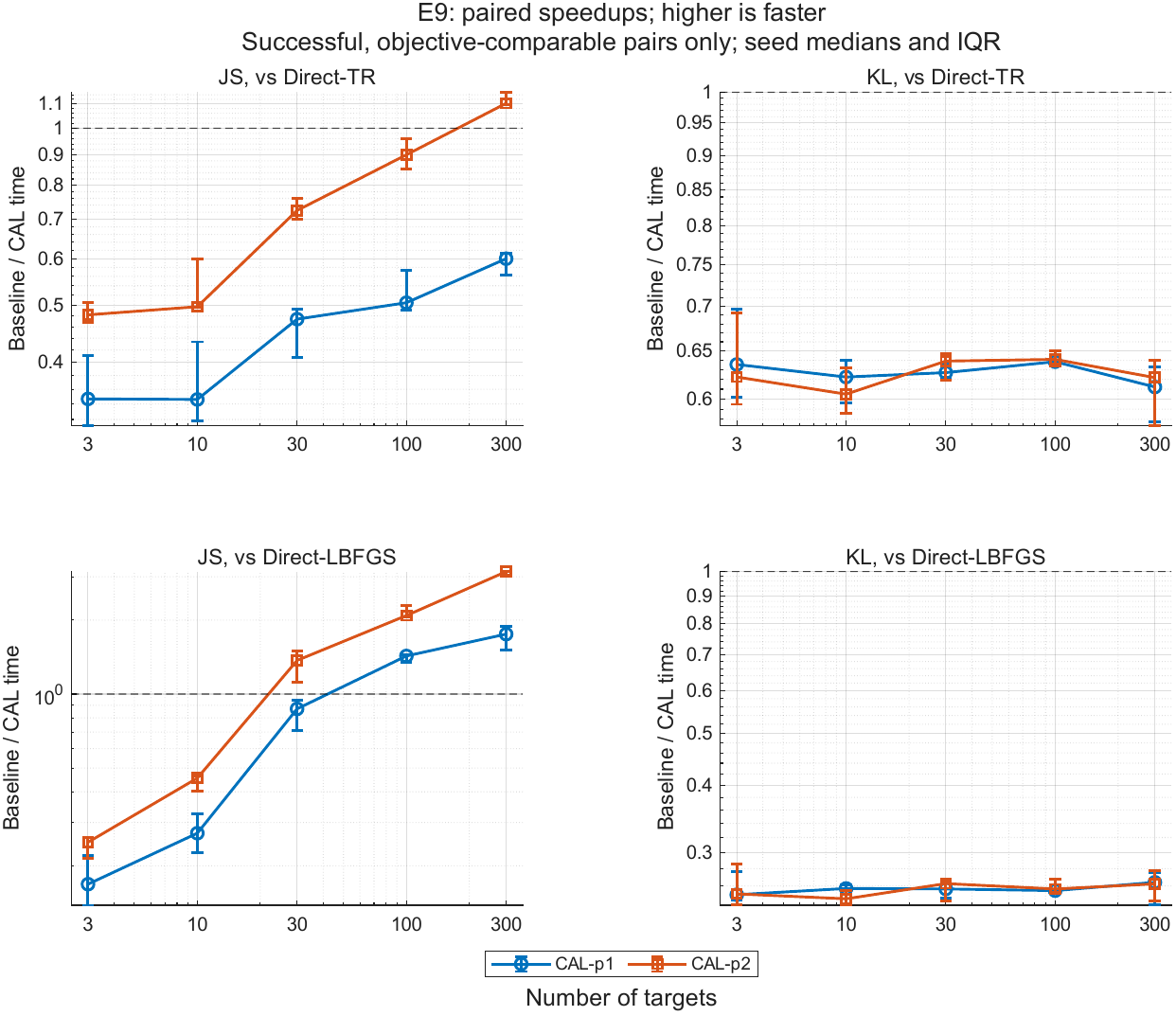}
 \caption{E9 full paired speedups for JS and the exact-aggregation KL control, against Direct-TR and Direct-LBFGS. Curves summarize successful objective-comparable pairs by seed medians and IQRs. CAL-p2 JS pair coverage is given in Table~\ref{tab:e9-main}. Raw records retain every attempted case.}
 \label{fig:e9-all-speedups}
\end{figure}

\paragraph{Diagnostic cost breakdown.}
Separate instrumented replays measure preparation, true/template evaluations, linear algebra, and other work (Figure~\ref{fig:e9-breakdown}). Component medians need not sum to median runtime. Speedups use unprofiled paired times. At $m=300$ on JS, the median CAL-p2 counts are five true FGH calls, four additional true F calls, 56 template F calls, and 14 template Hessian calls, compared with six true FGH and five additional F calls for Direct-TR. The resulting gain combines lower true-evaluation demand with inexpensive repeated template work.

\begin{figure}[!htbp]
 \centering
 \includegraphics[width=0.798\linewidth]{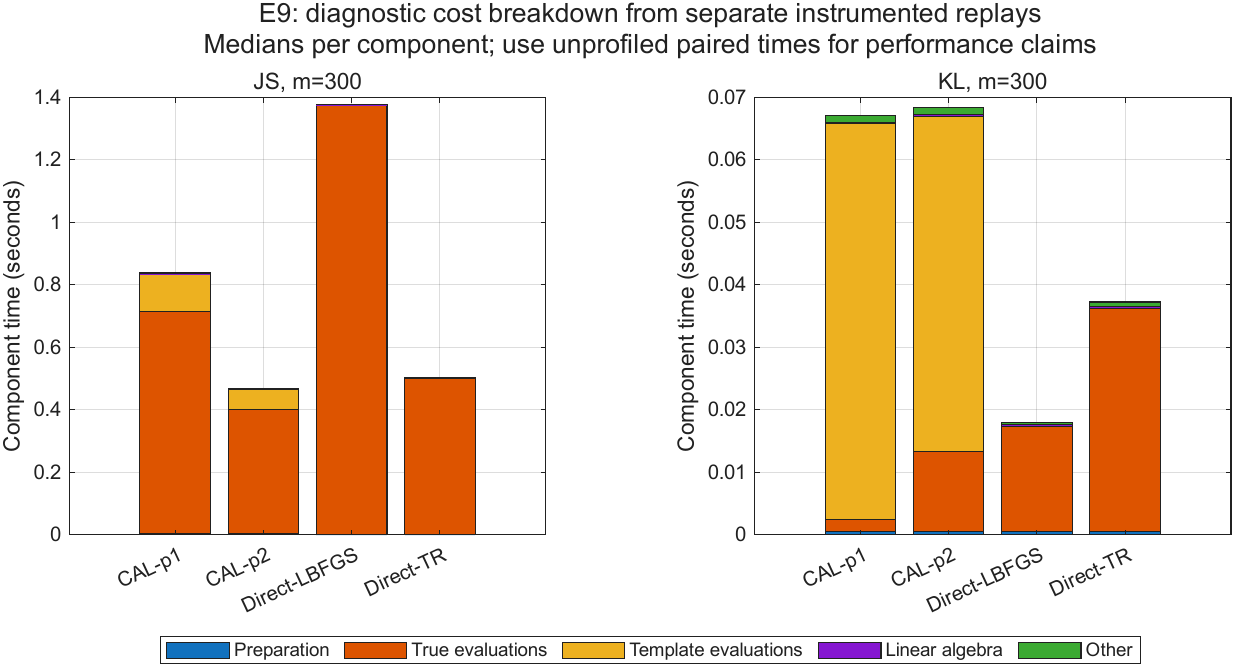}
 \caption{E9 diagnostic cost breakdown at $m=300$ from separate instrumented replays. Each component is summarized by its median. Performance claims use the unprofiled paired timing experiment.}
 \label{fig:e9-breakdown}
\end{figure}
\begin{figure}[!htbp]
 \centering
 \includegraphics[width=0.798\linewidth]{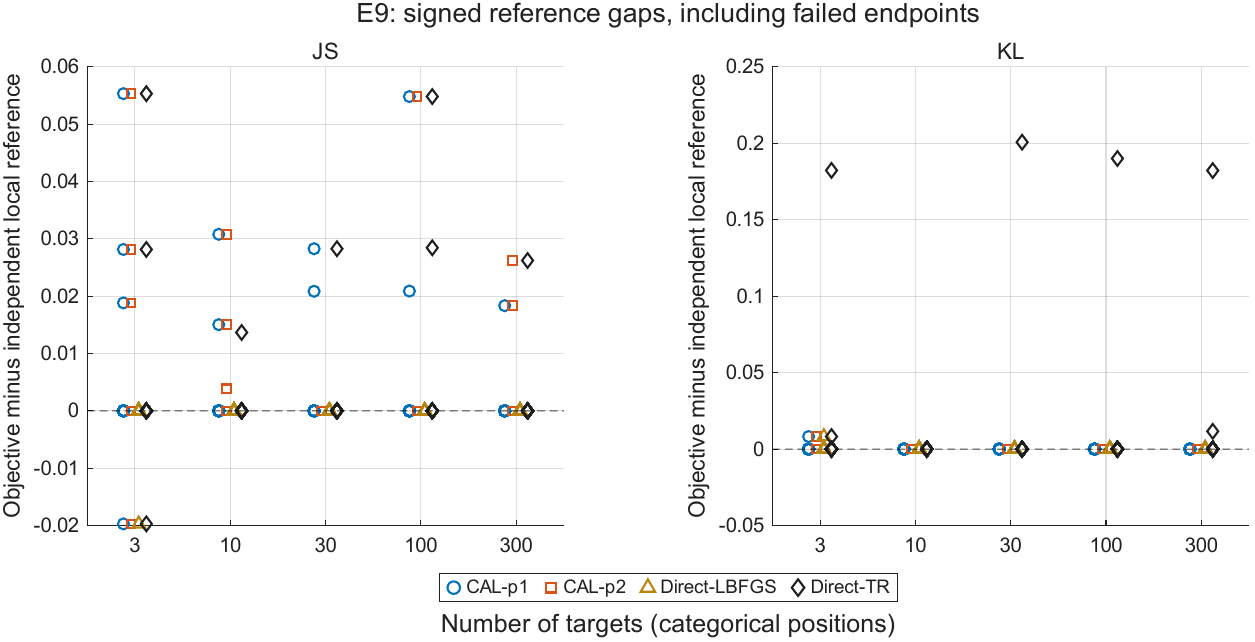}
 \caption{E9 signed objective gaps relative to independent local references, retaining all endpoints. Negative gaps are possible because references are best-found local solutions. These plots report solution variation separately from stationarity success and timing-pair coverage.}
 \label{fig:e9-gaps}
\end{figure}
\subsection{E10: canonical geometry and the leading local error}
\label{app:e10-details}

E10 tests the leading-error cancellation in Proposition~\ref{prop:leading-error} at fixed $p=2$, complementing E5's generic local orders and E7's finite-step errors. Third derivatives are offline diagnostics. All optimizers use only values, gradients, and Hessians.

\paragraph{Problems and target heterogeneity.}
Use $K=6$ categories, $d=2$ parameters, $m=4$ targets, seeds
$s=1,\ldots,5$, and MATLAB's Twister generator with seed $27100+s$.
Draw $b\sim\mathcal N(0,0.35^2I)$, a standard Gaussian $K\times d$
matrix $G_0$, target perturbations $\xi_i\sim\mathcal N(0,0.7^2I)$,
positive raw preferences $w_i=0.25+U_i$ with $U_i\sim U(0,1)$,
and one standard Gaussian direction. Set $q_0=\softmax(b)$, center
$G_0$ by subtracting $\mathbf1(q_0^\top G_0)$, and rescale it to a
matrix $G$ whose largest row norm is $0.4$. The affine family and
preferences are
\[
 q_\theta=q_0+B\theta,\qquad B=\operatorname{diag}(q_0)G,
 \qquad \Theta=\{\theta:q_\theta>0\},\qquad
 \lambda_i=w_i/\sum_jw_j.
\]
The laws are normalized since $\mathbf1^\top B=0$. For
$\eta\in\{0,0.01,0.05,0.2\}$, set
$P_i(\eta)=\softmax(\log q_0+\eta\xi_i)$, explicitly assigning
$P_i(0)=q_0$. Forward JS and KL4 use the generator conventions in Section~\ref{sec:generators}. Thus $\eta=0$ is common matching. Positive
$\eta$ perturbs the targets using shared random draws.

\paragraph{Comparators and common inner accuracy.}
CAL-canonical uses $\alpha_f$. CAL-alpha-minus/plus use
$\alpha_f-1$/$\alpha_f+1$, with the same scale $a_f$ and true residual
value, gradient, and Hessian correction. Taylor-p2 uses the true
quadratic Taylor model. All four models therefore agree through order
two at every anchor. All methods impose
\begin{equation}
 \norm{\nabla m_\theta(s)}\leq0.01\norm{\nabla F(\theta)}^3.
 \label{eq:e10-forcing}
\end{equation}
This strengthens Theorem~\ref{thm:local}'s $p=2$ forcing to reveal the coefficient in~\eqref{eq:leading-local-error}. Newton iterations on
the model start from zero, with at most 30 iterations and 45 backtracking
halvings, retaining feasible steps that reduce the model-gradient norm.
The trial radius is $0.45\min_k q_\theta(k)/\norm{B_{k,:}}$.
True-objective acceptance requires positive actual decrease,
$\rho\geq0.1$, and the Cauchy decrease test with constant $0.1$
and a ballwise model-Hessian bound. No reference solution or third
derivative is passed to the step solver.

\paragraph{Precision, references, and audits.}
The completed \texttt{paper} run uses MATLAB Symbolic Math Toolbox
variable-precision arithmetic at 120 digits. An independent
true-objective Newton solve from zero computes each local reference
$\theta^\star$ to gradient tolerance $10^{-90}$ and checks positive
definiteness of its Hessian. References are used only to position and
audit local probes. Across 40 problems, the reference gradient
norms are below $1.10\times10^{-92}$ and the smallest reference Hessian
eigenvalue exceeds $2.44\times10^{-3}$.
Centered finite differences with step $10^{-20}$ independently check
analytic gradients, Hessians, and third derivatives for the true
objective and three predictors. All 160 checks pass tolerance
$10^{-30}$. The maximum normalized third-derivative discrepancy is
$1.21\times10^{-41}$. Direct and aggregated predictor evaluations
also agree, with maximum normalized discrepancy $7.12\times10^{-128}$.
The inner arithmetic floor is $10^{-105}$. Runs needing smaller
forcing tolerances are flagged rather than counted as successful.

\paragraph{Local maps and prespecified summaries.}
For each seed, use a unit direction $d_s$ and the scale
$r_s=\min\{1,0.2\min_k q_{\theta^\star}(k)/\norm{B_{k,:}}\}$,
initialize at $\theta^\star\pm r_s r d_s$, with
$r\in\{10^{-2},10^{-3},\ldots,10^{-7}\}$. The two signs share one
random direction and are not independent seeds. The design gives
$5\times2\times4\times4\times2\times6=1{,}920$ one-step maps.
An eligible map must be accepted, meet~\eqref{eq:e10-forcing}, pass
the Cauchy test, avoid precision and boundary flags, and contract its
parameter error to a value above the reference error floor $10^{-80}$.
All 1,920 maps qualify. Every attempt is retained, independently of fitted order.

Fit log error against log initial radius on the fixed relative window
$10^{-7}\leq r\leq10^{-3}$, using five points per seed and sign.
Summarize signs within each seed and then report medians and IQRs
across five seeds. The same procedure fits the expansion remainder
\[
 \norm{e_+-b(e)},\qquad
 b(e)=\tfrac12H_\star^{-1}\nabla^3\Ecal(\theta^\star)[e,e].
\]
For Taylor-p2, use $\Ecal=F$. Figures~\ref{fig:e10-maps}
and~\ref{fig:e10-expansion} show the raw scaling and remainder audit.

\paragraph{Cancellation, error constants, and limits.}
At $\eta=0$, the median canonical order is $3.000$ for both generators,
whereas shifted templates and Taylor-p2 have order $2.000$.
The computed canonical residual third tensor has norm below
$4\times10^{-130}$, consistent with the exact cancellation.
At positive $\eta$, all group median orders are within $0.001$ of two.
Canonical one-step errors remain below Taylor-p2 in all 360 paired
positive-heterogeneity probes. Table~\ref{tab:e10-coefficients}
reports directional error-constant ratios, not speedups or a uniform ordering over directions.
All 320 individual expansion-remainder fits lie between
$2.99995$ and $3.00005$, supporting the vector coefficient, not merely
a fitted order (Figure~\ref{fig:e10-expansion}).

\begin{samepage}
Canonical $\alpha$ is not universally optimal: for KL4 at $\eta=0.2$, the negative shift has a smaller residual tensor norm in all five seeds. Common matching is a nonconflicting cancellation limit. Positive-$\eta$ cases test departures from it.
\par\end{samepage}

\paragraph{Actual trajectories and interpretation.}
Separate trajectories begin at $\theta^\star\pm0.01r_s d_s$ and
recalibrate after every accepted step, stopping at true-gradient norm
$10^{-30}$ or ten steps. All 320 trajectories reach that tolerance.
The stored 1,009 steps are accepted and meet the same forcing and
feasibility checks. E10 supports geometry-dependent local accuracy without third-order calibration, not runtime gains, global optimality, or generic cubic convergence for conflicting targets. E9 supplies the runtime evidence.
\par

\begin{figure}[!htbp]
 \centering
 \begin{tabular}{@{}cc@{}}
 \includegraphics[width=.36\linewidth,viewport=0 190 192 381,clip]{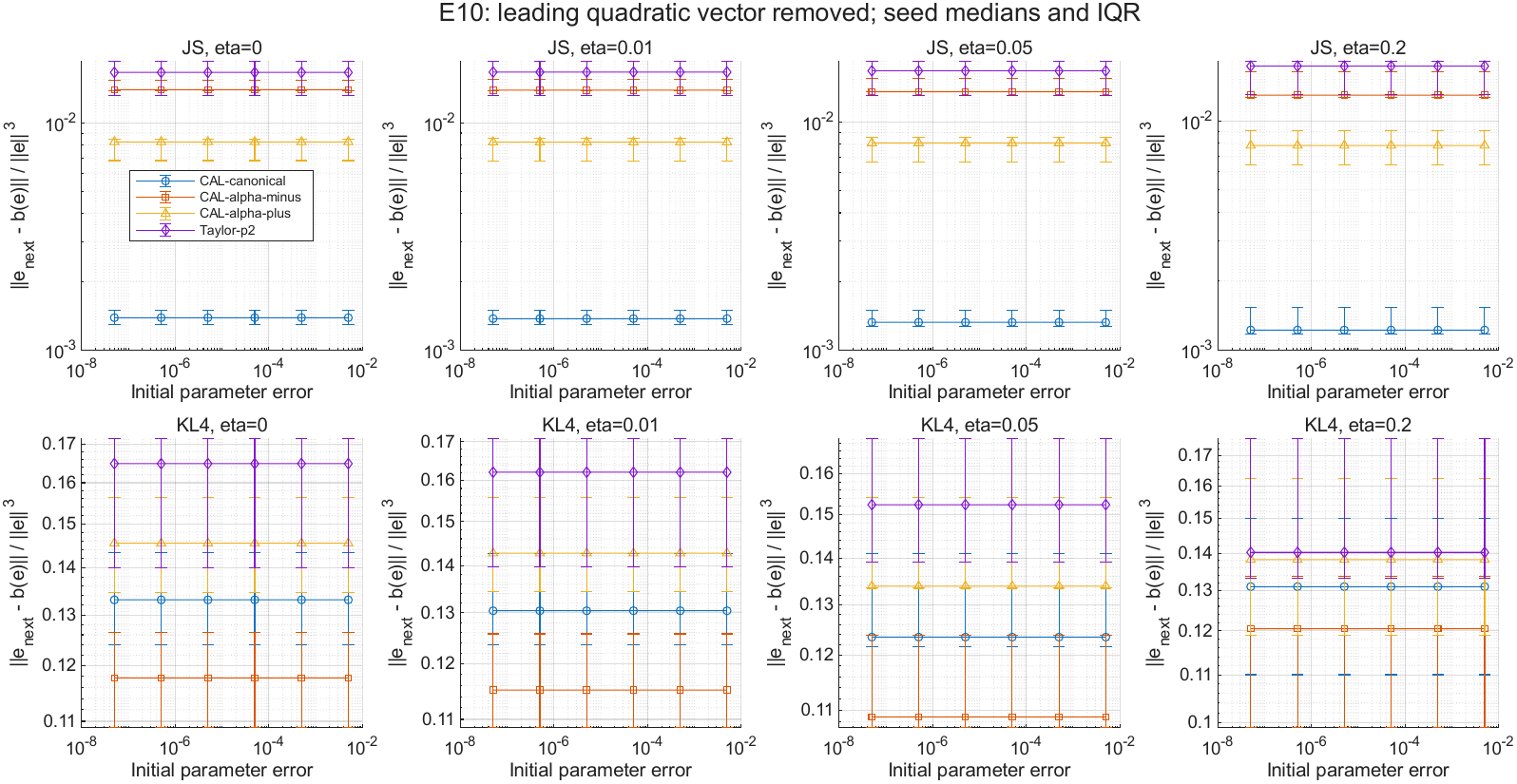}&
 \includegraphics[width=.36\linewidth,viewport=385 190 573 381,clip]{E10_expansion_audit.pdf}\\[3pt]
 \includegraphics[width=.36\linewidth,viewport=0 0 192 189,clip]{E10_expansion_audit.pdf}&
 \includegraphics[width=.36\linewidth,viewport=385 0 573 189,clip]{E10_expansion_audit.pdf}
 \end{tabular}
 \caption{E10 expansion audit: subtracting the predicted quadratic vector leaves an approximately constant $\norm{e_+-b(e)}/\norm e^3$. Panels show $\eta=0,0.05$. Fits cover all four heterogeneities. Third derivatives enter only the offline prediction and audit.}
 \label{fig:e10-expansion}
\end{figure}

\FloatBarrier

\end{document}